\definecolor{darkpastelgreen}{rgb}{0.01, 0.75, 0.24}
\newcommand{\R}{\ensuremath{\mathbf{R}}}
\newcommand{\F}{\ensuremath{\mathbf{F}}}
\newcommand{\Q}{\ensuremath{\mathbf{Q}}}
\newcommand{\C}{\ensuremath{\mathbf{C}}}
\newcommand{\Z}{\ensuremath{\mathbf{Z}}}
\newcommand{\K}{\ensuremath{\mathrm{K}}}
\newcommand{\SU}{\ensuremath{\mathrm{SU}}}
\newcommand{\Sim}{\ensuremath{\mathrm{S}}}
\newcommand{\B}{\ensuremath{\mathrm{B}}}
\newcommand{\QQ}{\ensuremath{\mathrm Q}}
\newcommand{\dd}{\ensuremath{\mathrm{d}}}
\renewcommand{\H}{\ensuremath{\mathbb{H}}}
\newcommand{\T}{\ensuremath{\mathbb{T}}}
\newcommand{\xx}{\ensuremath{\mathbf{x}}}
\newcommand{\yy}{\ensuremath{\mathbf{y}}}
\newcommand{\mm}{\ensuremath{\mathbf{m}}}
\renewcommand{\mod}{\ensuremath{~\mathrm{mod}~}}
\newcommand{\tend}[1]{\underset{#1}{\longrightarrow}}
\newcommand{\zqz}{\ensuremath{\Z/q \Z}}
\newcommand{\zqzc}{\ensuremath{\left(\Z/q \Z\right)^\times}}
\newcommand{\zpalph}{\ensuremath{\left(\Z/p^\alpha \Z\right)^\times}}
\newcommand{\svdots}{%
	\vbox{\fontsize{\sf@size}{\sf@size pt}\linespread{0.3}\selectfont
		\kern0.2\baselineskip
		\hbox{.}\hbox{.}\hbox{.}%
		\kern0.1\baselineskip
	}%
}
 \theoremstyle{plain}
 \newtheorem{theo}{Theorem}
 \newtheorem{defi}{Definition}[section]
 \newtheorem{theobis}[defi]{Theorem}
 \newtheorem{propamoi}[theo]{Proposition}
 \newtheorem{lem}[defi]{Lemma}
 \newtheorem{prop}[defi]{Proposition}
 \newtheorem{cor}[defi]{Corollary}
 \theoremstyle{definition}
 \newtheorem{rem}[defi]{Remark}
 \newtheorem{exemple}[defi]{Example}
\begin{document}
\renewcommand{\refname}{References}
\renewcommand{\contentsname}{Contents}
\renewcommand{\proofname}{Proof}
\begin{center}
	 \begin{minipage}{0.8\textwidth}
	 	\centering
	 	\Large \textbf{\textsc{Equidistribution of exponential sums indexed by a subgroup of fixed cardinality}}
	 \end{minipage}\\
 
 \vspace{1cm}
 Théo Untrau
\end{center}	 
\vspace{1cm}
\textsc{Abstract.} We consider families of exponential sums indexed by a subgroup of invertible classes modulo some prime power $q$. For fixed $d$, we restrict to moduli $q$ so that there is a unique subgroup of invertible classes modulo $q$ of order $d$. We study distribution properties of these families of sums as $q$ grows and we establish equidistribution results in some regions of the complex plane which are described as the image of a multi-dimensional torus via an explicit Laurent polynomial. In some cases, the region of equidistribution can be interpreted as the one delimited by a hypocycloid, or as a Minkowski sum of such regions.\\

\vspace{1cm}

\tableofcontents

\newpage 
\section{Introduction}

\subsection{Equidistribution of complete sums: the example of Kloosterman sums}

Let $q= p^\alpha$, where $p$ is an odd prime and $\alpha \in \Z_{ \geqslant 1}$. The classical Kloosterman sums are the real numbers defined by $$ \K_q(a,b) := \sum_{x \in \zqzc}^{} e\left( \frac{a x + b x^{-1}}{q}\right) $$
for any integers $a$ and $b$. Throughout this article, we use the notation $e(z)$ for $\exp(2i \pi z)$ and $x^{-1}$ for the inverse of $x \text{ modulo } q$.
These sums satisfy the bound\footnote{Here one really needs to assume that $p$ is an odd prime. When $q = 2^\alpha$ with $\alpha \geqslant 5$, the upper bound \eqref{Kqupperbound} needs to be replaced by $\left|\K_q(a,b)\right| \leqslant (2 \sqrt 2) \sqrt q$ (see the corrigendum \cite{corrigendum} to the article \cite{fouvrymichel}).}:
\begin{equation} \label{Kqupperbound}
\left|\K_q(a,b)\right| \leqslant 2 \sqrt q \quad \text{for all } a,b \in \zqzc,
\end{equation}
which is a consequence of Weil's work on the Riemann hypothesis for curves over finite fields when $\alpha =1$, and elementary computations when $\alpha \geqslant 2$ (see \cite[Corollary 1]{dubi}). This raises the question of the distribution of the sets of sums

$$\left\{ \frac{1}{\sqrt q} \K_q(a,b);  \  a,b \in \zqzc \right\}$$
in the interval $[-2,2]$ as $q$ goes to $+ \infty$. A result due to Katz asserts that the sets of sums\\ $\left\{  \frac{1}{\sqrt p}\K_p(a,1);  \  a \in \F_p^\times \right\}$ become equidistributed with respect to the Sato-Tate measure on $[-2,2]$:

$$\dd \mu_{\mathrm{ST}}(x) = \frac{1}{2\pi}\sqrt{4-x^2}\dd x $$

as $p \to + \infty$ through primes (see \cite[Example 13.6]{katz} for this specific statement). This relies on Deligne's equidistribution theorem and involves deep notions of algebraic geometry.\\

In the case where $q = p^\alpha$ is a non-trivial prime power (i.e. $\alpha \geqslant 2$), one can prove via elementary methods an equidistribution result for the sets $\left\{  \frac{1}{\sqrt q}\K_q(a,1);  \  a \in \zqzc \right\}$ as $q$ goes to infinity, see \cite[Remark 1.1]{dubi}. In this case, the measure with respect to which the sums become equidistributed is the measure $\mu$ defined as follows:

$$ \dd \mu(x) = \frac{1}{2} \delta_0(x) + \frac{1}{2 \pi} \frac{1}{\sqrt{4-x^2}} \dd x.$$
The following figure illustrates these two different behaviours. 
\begin{figure}[H]
	\centering
	\begin{subfigure}[t]{0.45\textwidth}
		\centering
		\includegraphics[width=\textwidth]{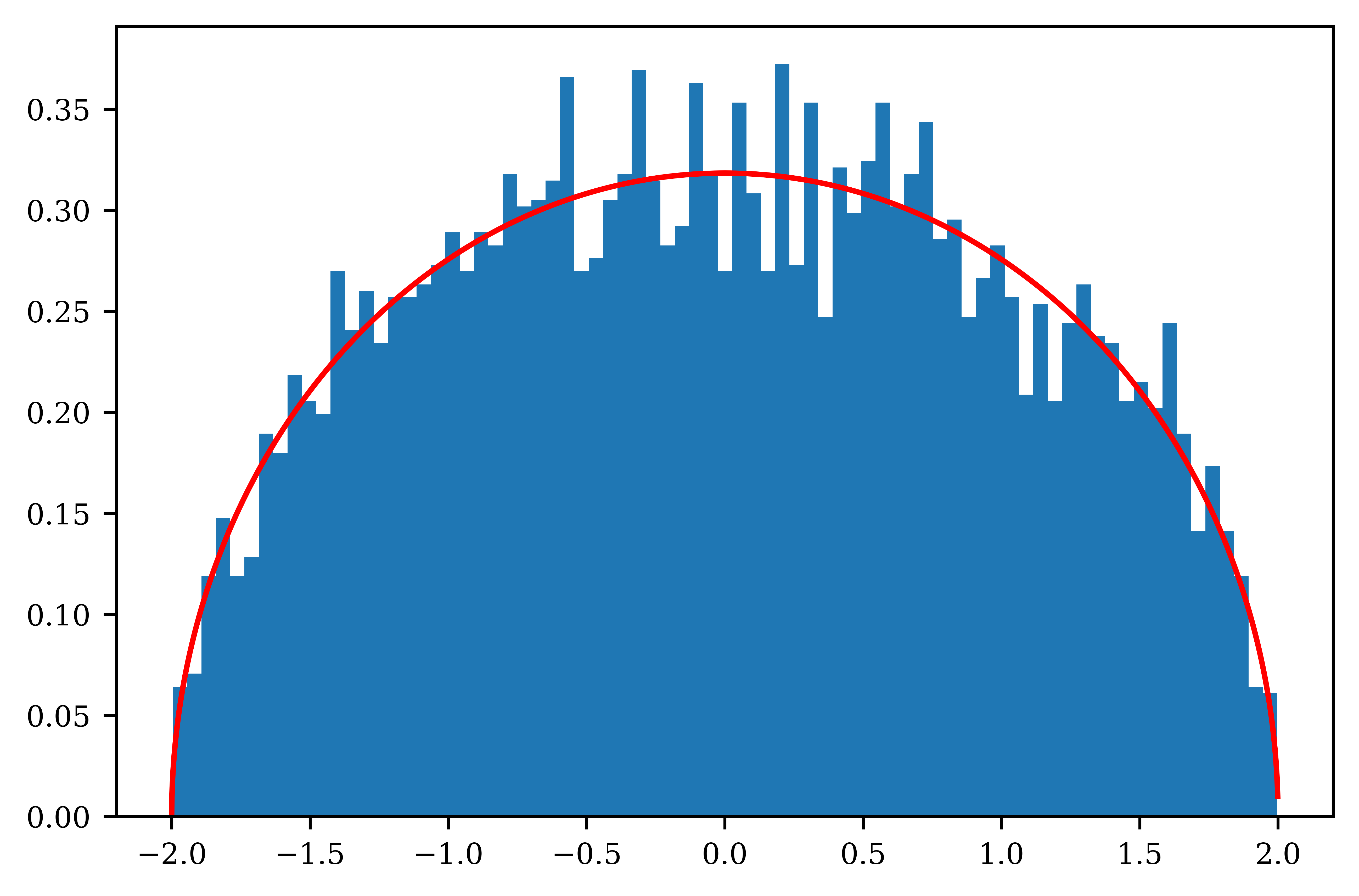}
		\caption{Distribution of the values $  \frac{1}{\sqrt{6007}}\K_{6007}(a,1)$ in $[-2,2]$ as $a$ ranges in $\F_{6007}^\times$. The red curve is the graph of $x \mapsto \frac{1}{2\pi}\sqrt{4-x^2}$.}
		\label{satotate}
	\end{subfigure}
	\hfill
	\begin{subfigure}[t]{0.45\textwidth}
		\centering
		\includegraphics[width=\textwidth]{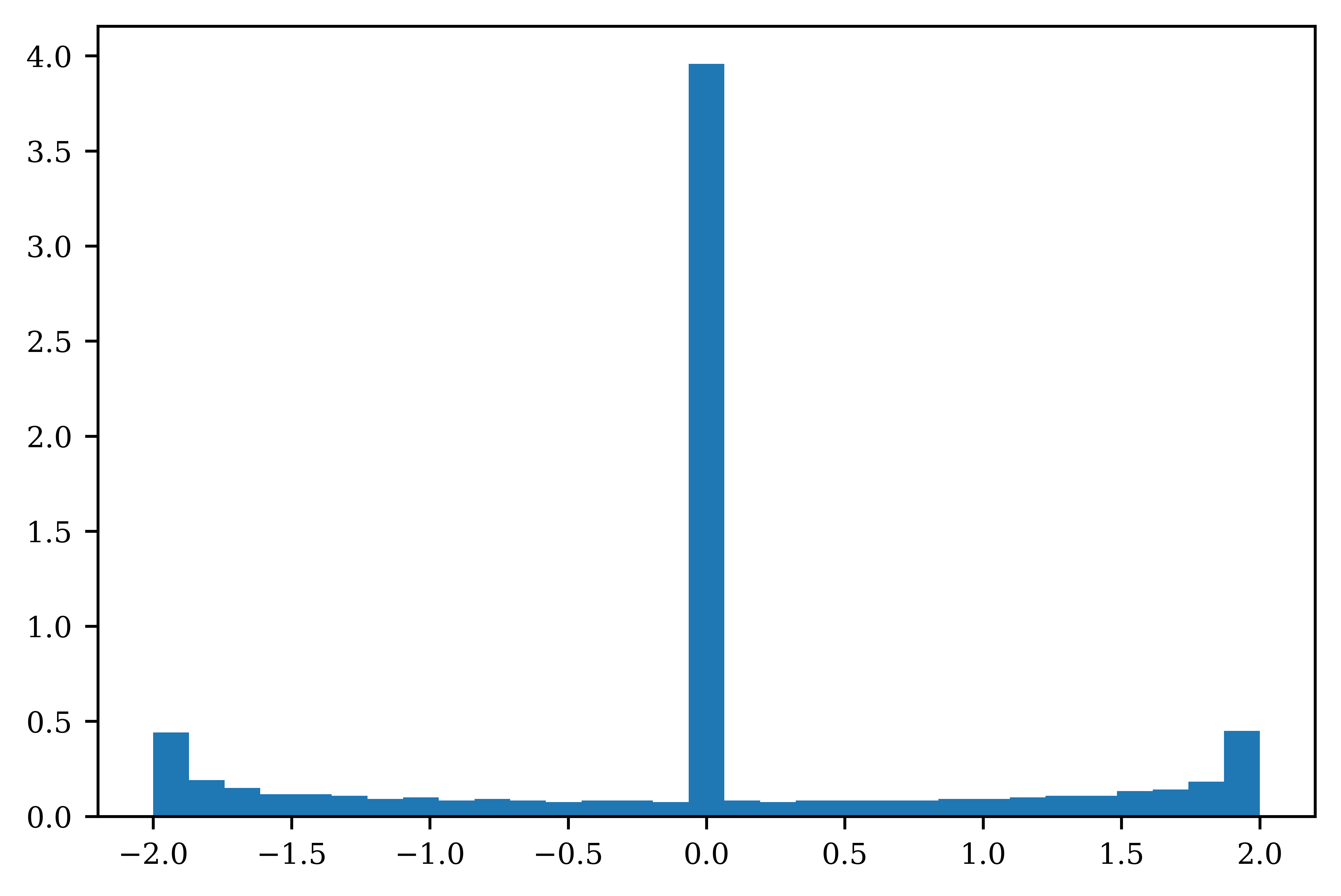}
		\caption{Distribution of the values $  \frac{1}{31}\K_{31^2}(a,1)$ in $[-2,2]$ as $a$ ranges in $(\Z/ 31^2 \Z)^\times$.}
		\label{diracenzero}
	\end{subfigure}
\caption{Distribution of normalized Kloosterman sums modulo a prime and modulo a prime power.}
\end{figure}

\subsection{Equidistribution of sums indexed by a subgroup}

The aim of this work is to study the question of the distribution of sums indexed by a subgroup of $\zqzc$. This question is motivated by the equidistribution results already known for complete sums, such as the ones presented in the previous section, as well as the appealing figures shown in the articles \cite{periods, visual} and \cite{menagerie}. In the latter, the authors fix an integer $d$ and introduce the \enquote{restricted} geometric sums:
\begin{equation} \label{xdegalungeom}
	\Sim_q(a,d) :=  \sum_{\substack{x \in \zqzc \\ x^d =1}}^{} e\left(\frac{ax}{q}\right)
\end{equation}
Then, the equidistribution of the sets $\left\{  \Sim_q(a,d);  \  a \in  \zqz \right\}$ as $q$ tends to infinity is investigated. In order to avoid degenerate cases in the index set of the sum defining $\Sim_q(a,d)$ and other sums in the remainder of this article, we make the following definition.

\begin{defi} \label{dadm}
	An integer $q$ will be called $d$-admissible if it is of the form $p^\alpha$ for some odd prime number $p$ congruent to $1$ modulo $d$, and some integer $\alpha \geqslant 1$. We denote by $\mathcal A_d$ the set of $d$-admissible integers.
\end{defi}

If $q$ is $d$-admissible, then the group $\zqzc$ has a unique subgroup of order $d$, explicitly described as $\{x \in \zqzc;\  x^d = 1 \}$. Thus, the sum in \eqref{xdegalungeom} can be interpreted as the one indexed by the unique subgroup of order $d$ of $\zqzc$. \\

In order to state the equidistribution result proved in \cite{menagerie, periods}, we need one last definition.

\begin{defi} \label{defgd}
	Let $d \geqslant 1$. For all $k \in \{ 0, \dots,d-1\}$, we denote by $(c_{j,k})_{0 \leqslant j < \varphi(d)}$ the coefficients of the remainder in the euclidean division of $X^k$ by $\phi_d$, the $d^{th}$ cyclotomic polynomial over $\Q$; precisely, these coefficients are defined by the property
	$$X^k \equiv \sum_{j=0}^{\varphi(d) -1} c_{j,k} X^j \mod \phi_d.$$
	Then, we define the Laurent polynomial
	
	$$\begin{array}{ccccc}
	g_d & : & \T^{\varphi(d)} & \to & \C \\
	&& (z_1, \dots, z_{\varphi(d)}) & \mapsto & \displaystyle \sum ^{d-1}_{k=0}\prod ^{\varphi \left( d\right) -1}_{j=0}z_{j+1}^{c_{j,k}} 
	\end{array}$$
\end{defi}

With these notations, the main theorem of \cite{menagerie, periods} on the asymptotic behaviour of sums of type \eqref{xdegalungeom} can be stated as follows. In \emph{loc. cit.} the theorem is stated as a density result, but the proof actually shows that equidistribution holds with respect to the appropriate pushforward measure.

\begin{theobis}[{\cite[Theorem 1]{menagerie} and {\cite[Theorem 6.3]{periods}}}] \label{thconnu1} Let $d \geqslant 1$. The sets \\
	$\left\{  \Sim_q(a,d);  \  a \in  \zqz \right\}$ become equidistributed in the image of $g_d$ with respect to the pushforward measure of the probability Haar measure $ \lambda$ on $\T^{\varphi(d)}$ via $g_d$, as $q$ goes to infinity among the $d$-admissible integers. In other words, for any continuous map $F\colon g_d\left(\T^{\varphi(d)}\right) \to \C$,
	$$\frac{1}{q} \sum_{a \in \zqz}^{} F\left(\Sim_q(a,d)\right) \tend{\substack{ q \to \infty \\ q \in \mathcal A_d}} \int_{\T^{\varphi(d)}} (F\circ g_d) \dd  \lambda .$$
\end{theobis}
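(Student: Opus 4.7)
The plan is to apply Weyl's equidistribution criterion on $\T^{\varphi(d)}$ and to push the result forward through the continuous map $g_d$.

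For $q = p^\alpha \in \mathcal A_d$, I first fix a generator $\zeta = \zeta_q \in \zqzc$ of the unique subgroup of order $d$, so that $\Sim_q(a,d) = \sum_{k=0}^{d-1} e(a\zeta^k/q)$. Using $\zeta^d \equiv 1 \pmod q$ together with $X^d - 1 = \prod_{e \mid d} \phi_e(X)$, and observing that $\zeta$ reduces to an element of order exactly $d$ in $\F_p^\times$ (since $\gcd(d,p) = 1$), one sees that $\phi_e(\zeta)$ is a unit modulo $q$ for each proper divisor $e$ of $d$, forcing $\phi_d(\zeta) \equiv 0 \pmod q$. The Euclidean division $X^k = Q_k(X) \phi_d(X) + \sum_j c_{j,k} X^j$ with $Q_k \in \Z[X]$ then gives $\zeta^k \equiv \sum_j c_{j,k} \zeta^j \pmod q$, whence
\[ \Sim_q(a,d) = g_d\!\left( e(a\zeta^0/q), \ldots, e(a\zeta^{\varphi(d)-1}/q) \right). \]
Setting $\theta_a := \left( a\zeta^j/q \bmod 1 \right)_{0 \leq j < \varphi(d)} \in \T^{\varphi(d)}$, continuity of $g_d$ (together with testing against $F \circ g_d$ for continuous $F$) reduces the theorem to showing that the empirical measures of the $\theta_a$, as $a$ ranges over $\zqz$, converge weakly to the Haar measure on $\T^{\varphi(d)}$.

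By Weyl's criterion, this amounts to proving that for every non-zero tuple $\mathbf m = (m_0, \ldots, m_{\varphi(d)-1}) \in \Z^{\varphi(d)}$,
\[ \frac{1}{q} \sum_{a \in \zqz} e(\mathbf m \cdot \theta_a) \;=\; \frac{1}{q} \sum_{a \in \zqz} e\!\left( \frac{a \, P(\zeta)}{q} \right) \;\tend{\substack{q \to \infty \\ q \in \mathcal A_d}}\; 0, \]
where $P(X) := \sum_j m_j X^j \in \Z[X]$ is fixed, non-zero, of degree $< \varphi(d)$. The inner geometric sum equals the indicator of the congruence $P(\zeta_q) \equiv 0 \pmod q$, so the entire argument reduces to the arithmetic statement: \emph{for each fixed such $P$, $P(\zeta_q) \not\equiv 0 \pmod q$ for every sufficiently large $q \in \mathcal A_d$.}

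This last reduction is the main obstacle. Because $\phi_d$ is irreducible in $\Q[X]$ and $\deg P < \deg \phi_d$, one has $\gcd(P, \phi_d) = 1$ in $\Q[X]$. Bezout's identity, after clearing denominators, yields polynomials $U, V \in \Z[X]$ and a non-zero integer $N = N(P)$, all depending only on $P$, such that $UP + V \phi_d = N$. Evaluating at $\zeta_q$ and using $\phi_d(\zeta_q) \equiv 0 \pmod q$ gives $U(\zeta_q) \, P(\zeta_q) \equiv N \pmod q$; hence $P(\zeta_q) \equiv 0 \pmod q$ would imply $q \mid N$, which fails as soon as $q > |N|$. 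This closes the Weyl sum estimate and completes the plan.
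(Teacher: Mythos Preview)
Your proposal is correct and follows essentially the same route as the paper: express $\Sim_q(a,d)$ as $g_d$ evaluated at the point $\big(e(a\zeta^j/q)\big)_j$, reduce the statement to equidistribution modulo~$1$ of the vectors $\big(a\zeta^j/q\big)_j$, and verify Weyl's criterion via the B\'ezout identity $UP+V\phi_d=N$ (this is exactly the paper's Lemma~\ref{myerson}). The one place where you diverge slightly is in proving $\phi_d(\zeta)\equiv 0\pmod q$: the paper (Lemma~\ref{affirmation}) lifts $\zeta$ to a root of $X^d-1$ in $\Z_p$ via Hensel's lemma and argues in the integral domain $\Z_p$, whereas you argue directly that $\phi_e(\zeta)$ is a $p$-adic unit for each proper divisor $e\mid d$ because $\zeta$ still has exact order $d$ in $\F_p^\times$. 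Your argument is a little more elementary and equally valid; both approaches exploit the same underlying fact that $\gcd(d,p)=1$.
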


Besides, it was proved in \cite[Theorem 7 and Theorem 10]{visual} that when $d$ is a prime number or $d=9$, the same equidistribution result holds for the sets of restricted Kloosterman sums $\left\{ \K_q(a,b, d); \ a,b \in (\zqz)^2 \right\}$, where
\begin{equation} \label{xdegalunkloos}
\K_q(a,b,d) :=  \sum_{\substack{x \in \zqzc \\ x^d =1}}^{} e\left(\frac{ax + b x^{-1}}{q}\right).
\end{equation}

Finally, for some specific values of the integer $d$ such as primes and prime powers, one can give a geometric interpretation of the image of $g_d$ in terms of hypocycloids.

\begin{defi} \label{defhypo}
	The $d$-cusp hypocycloid is the curve given by the image of:
	$$\begin{array}{ccc}
	\R &\mapsto &\C \\
	\theta & \mapsto & (d-1)\exp(i \theta) +  \exp((1-d) i \theta)
	\end{array}$$
	It is a curve described by a point of a circle of radius $1$ rolling inside a circle of radius $d$. 
\end{defi}

\begin{figure}[H]
	\centering
	\includegraphics[width= 0.8\textwidth]{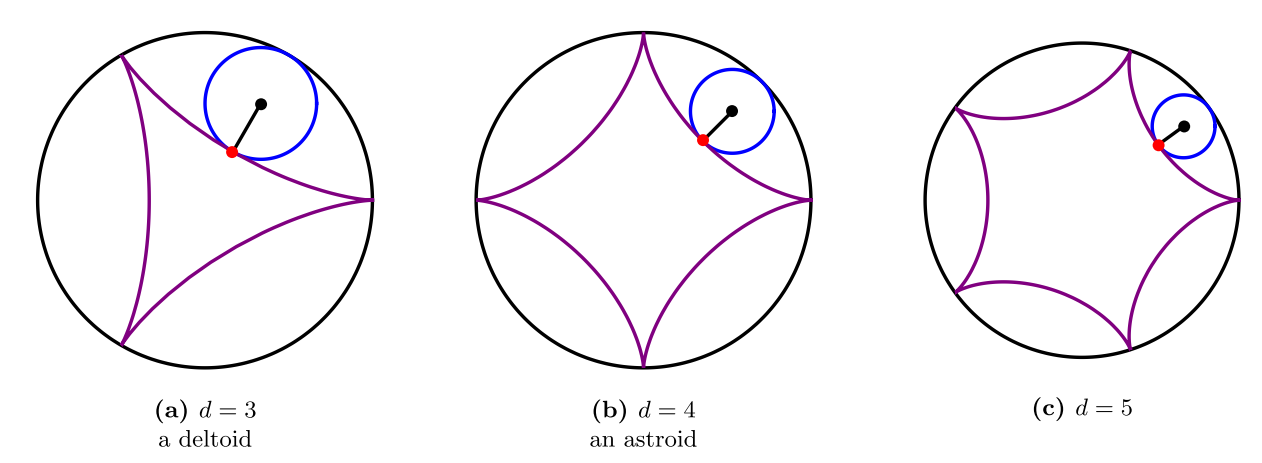}
	\caption{Some hypocycloids (image extracted from the article \cite{visual})}
\end{figure}

\begin{defi} \label{defHd}
	For all $d \geqslant 2$, we denote by $\H_d$ the closed region of the plane delimited by the $d$-cusp hypocycloid.
\end{defi}

Note that the $2$-cusp hypocycloid is just the interval $[-2,2]$, so it does not really enclose an area of the complex plane. Thus $\H_2$ is simply the interval $[-2,2]$ as well.\\

When $d$ is a prime, an explicit computation of $g_d$ leads to \cite[Proposition 1]{menagerie}, which states that the image of $g_d$ is the region $\H_d$. This yields a more concrete form of Theorem \ref{thconnu1}.

\begin{theobis}[{\cite[proof of Theorem 1.1]{periods}} and { \cite[Theorem 7 and p. 243,244]{visual}}] \label{thconnu2}
	Let $d$ be a prime number. Then the sets of sums $\left\{  \Sim_q(a,d);  \  a \in  \zqz \right\}$ become equidistributed in $\H_d$ with respect to the pushforward measure of the probability Haar measure on $\T^{d-1}$ via the map $$ g_d \colon (z_1, \dots, z_{d-1}) \mapsto z_1 + \dots + z_{d-1} + \frac{1}{z_1\cdots z_{d-1}}$$ as $q$ goes to infinity among the $d$-admissible integers. The same statement holds for the sets of sums \\$\left\{ \K_q(a,b, d); \ a,b \in (\zqz)^2 \right\}$.
\end{theobis}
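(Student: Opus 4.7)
The plan is to derive the statement from Theorem \ref{thconnu1} by making the map $g_d$ of Definition \ref{defgd} explicit in the prime case, then identifying its image with $\H_d$; the Kloosterman version will follow from the same geometric manipulation once one invokes the corresponding equidistribution result for restricted Kloosterman sums.

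First I would compute the coefficients $c_{j,k}$ of Definition \ref{defgd} when $d$ is prime. In this case $\varphi(d) = d-1$ and $\phi_d(X) = 1 + X + \cdots + X^{d-1}$. For $k \in \{0,1,\ldots,d-2\}$ the euclidean division of $X^k$ by $\phi_d$ is trivial, so $c_{j,k} = \delta_{j,k}$, contributing the single monomial $z_{k+1}$ to the sum defining $g_d$. For $k = d-1$, the relation $X^{d-1} \equiv -(1 + X + \cdots + X^{d-2}) \mod \phi_d$ gives $c_{j,d-1} = -1$ for every $j$, contributing the monomial $(z_1 \cdots z_{d-1})^{-1}$. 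Adding these $d$ contributions recovers the explicit formula
\[
g_d(z_1,\ldots,z_{d-1}) = z_1 + \cdots + z_{d-1} + \frac{1}{z_1 \cdots z_{d-1}}
\]
appearing in the statement.

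Next I would identify $g_d(\T^{d-1})$ with $\H_d$. Introducing the auxiliary variable $z_d := (z_1\cdots z_{d-1})^{-1}$, the map becomes $(z_1,\ldots,z_d) \mapsto z_1 + \cdots + z_d$ subject to $z_1\cdots z_d = 1$, which is exactly the trace of a diagonal element of $\SU(d)$. Since every matrix in $\SU(d)$ is conjugate to a diagonal one, the image of $g_d$ coincides with the image of the trace map $\Tr\colon \SU(d) \to \C$. A direct computation shows that the boundary of this image is traced out by taking $z_1 = \cdots = z_{d-1} = e^{i\theta}$, so that $z_d = e^{-(d-1)i\theta}$ and the sum equals $(d-1)e^{i\theta} + e^{-(d-1)i\theta}$, which is precisely the curve of Definition \ref{defhypo}; a convexity/critical-point argument then promotes this into the fact that the full image is the closed region $\H_d$. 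This is the content of \cite[Proposition 1]{menagerie} and is, as I expect, the main obstacle in the argument — the other steps are formal specializations, whereas identifying the image of a real-analytic map on a torus with a specific closed region really does require the hypocycloid geometry.

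For the Kloosterman case, the same two steps apply once Theorem \ref{thconnu1} is replaced by the restricted-Kloosterman equidistribution theorem of \cite[Theorem 7]{visual}, valid precisely when $d$ is a prime. The explicit form of $g_d$ and the identification of its image with $\H_d$ are purely properties of the target of the equidistribution and do not depend on whether the underlying sum is $\Sim_q(a,d)$ or $\K_q(a,b,d)$, so the geometric half of the argument transfers without modification.
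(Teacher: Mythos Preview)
Your proposal is correct and follows essentially the same route as the paper: deduce the result from Theorem \ref{thconnu1} by computing $g_d$ explicitly when $d$ is prime (this is Proposition \ref{interpret}, i.e.\ \cite[Proposition 1]{menagerie}), identify its image with $\H_d$ via the $\SU(d)$-trace description (Lemma \ref{image}), and for the Kloosterman case invoke \cite[Theorem 7]{visual}. The only difference is cosmetic: the paper outsources the image identification to the references \cite{cooper,kaiser} rather than sketching the boundary/critical-point argument you outline.
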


The following picture illustrates the asymptotic behaviour predicted by this theorem in the case of Kloosterman sums.

\begin{figure}[H]
	\centering
	\begin{subfigure}[b]{0.32\textwidth}
		\centering
		\includegraphics[width=\textwidth]{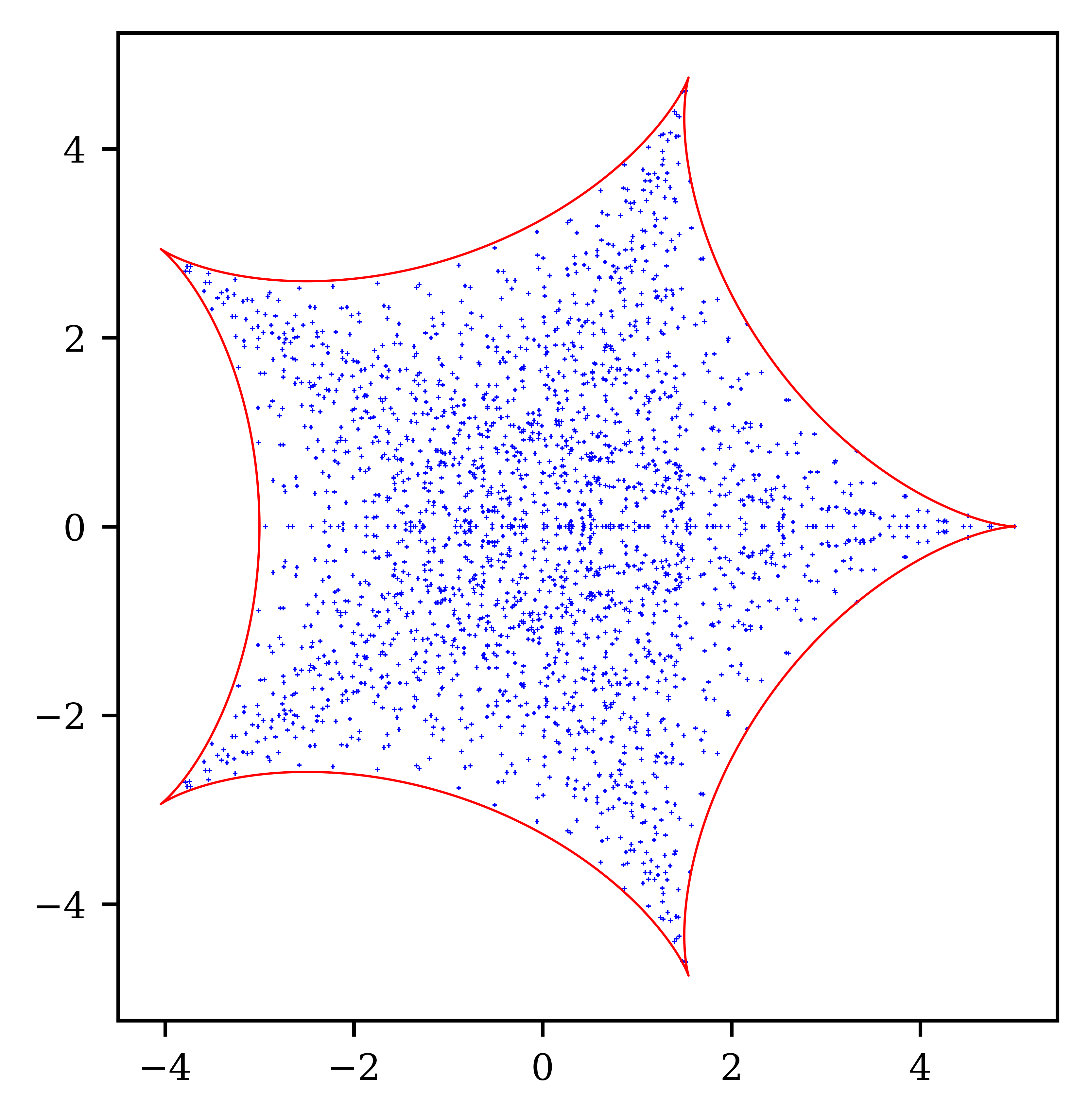}
		\caption{$q = 151$}
	\end{subfigure}
	\hfill
	\begin{subfigure}[b]{0.32\textwidth}
		\centering
		\includegraphics[width=\textwidth]{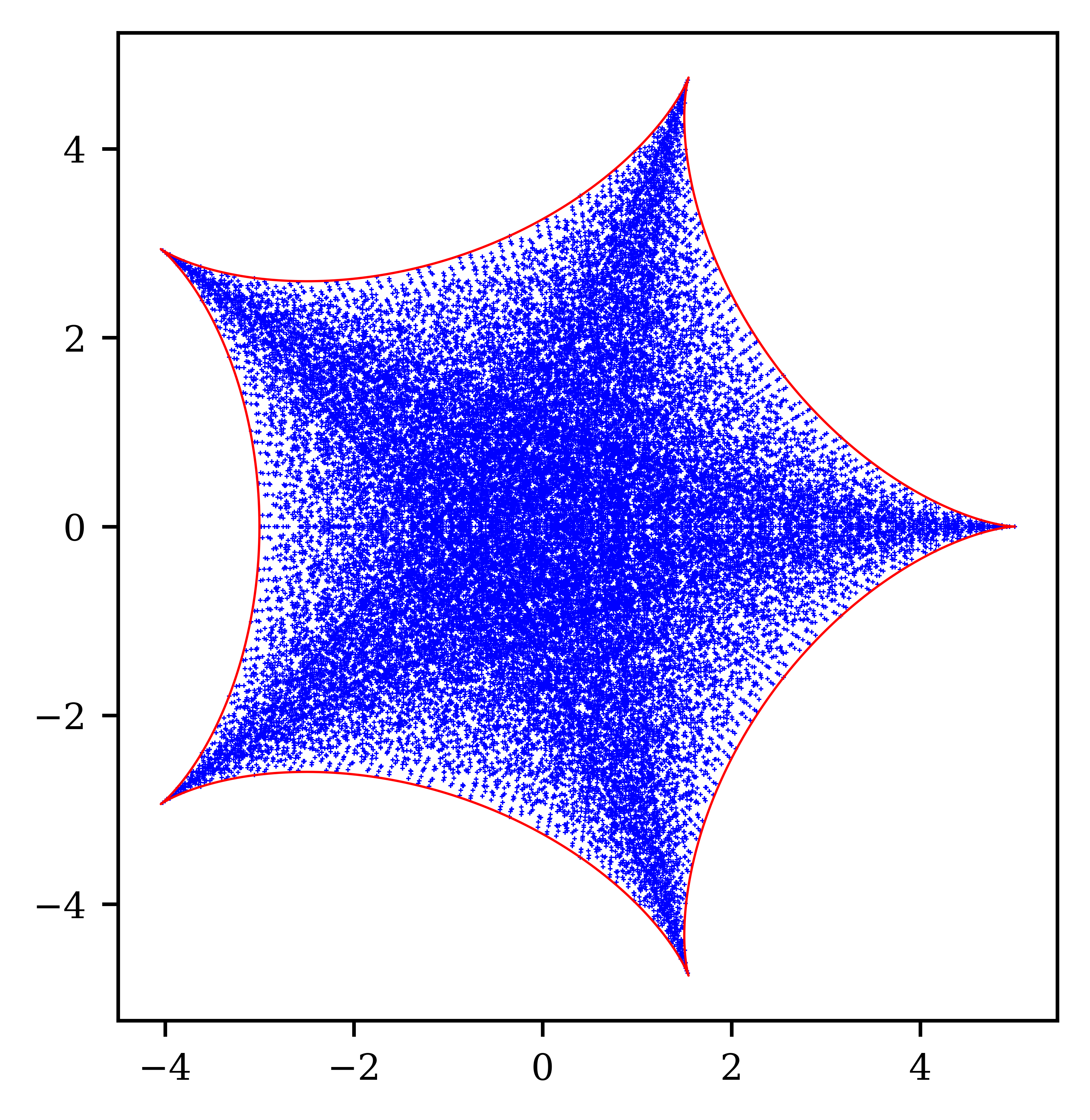}
		\caption{$q= 631$}
	\end{subfigure}
	\hfill
	\begin{subfigure}[b]{0.32\textwidth}
		\centering
		\includegraphics[width=\textwidth]{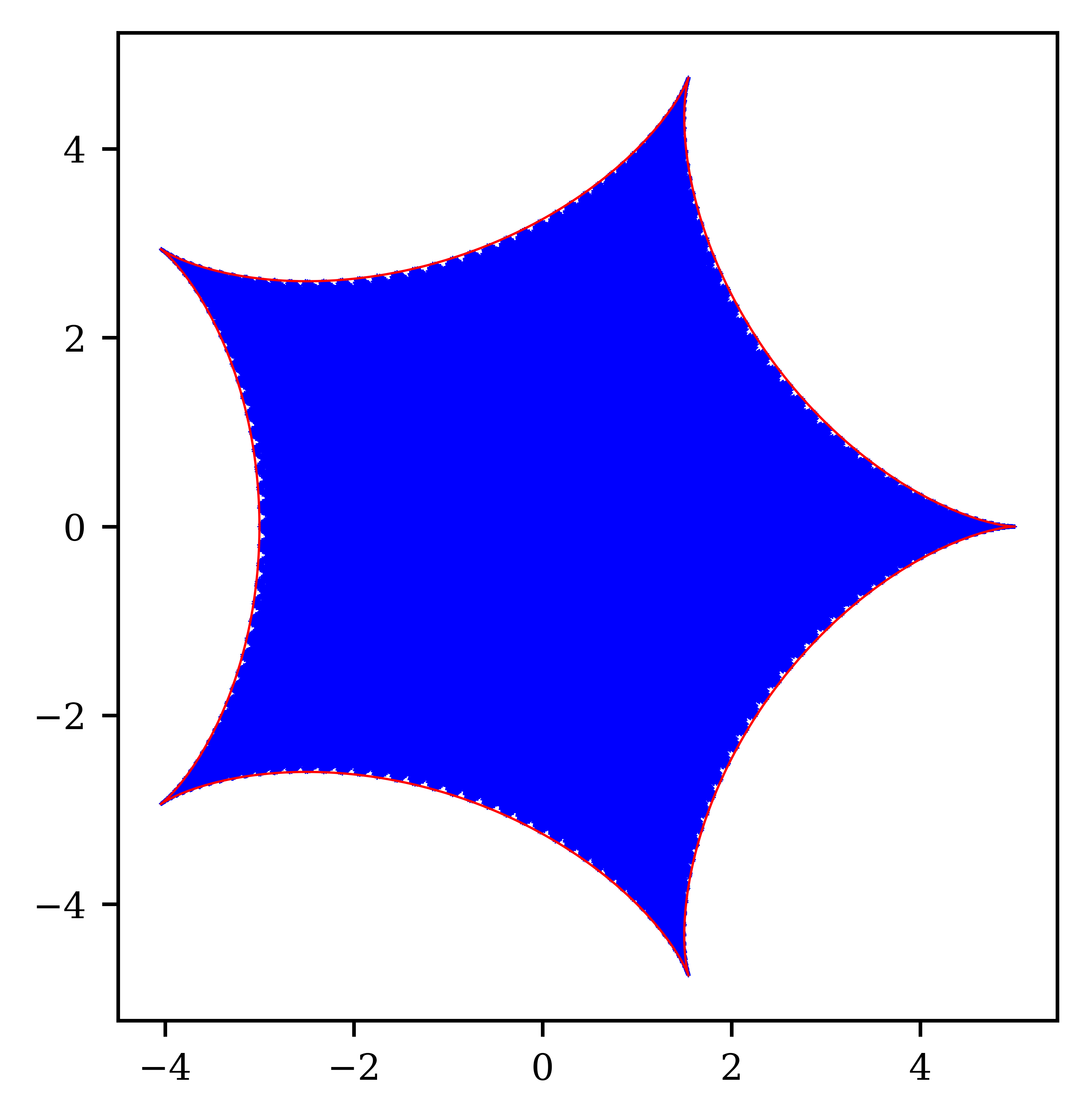}
		\caption{$q=3721 =61^2$}
	\end{subfigure}
	\caption{The sets $\left\{ \K_q(a,b, d); \ a,b \in (\zqz)^2 \right\}$ for $d= 5$ and three $5$-admissible values of $q$.}
	\label{3hypo5branches}
\end{figure}

The aim of this article is to generalize theorems \ref{thconnu1} and \ref{thconnu2} to more general families of exponential sums, and to study the question of restricting the parameters $a,b$ indexing the sums $\K_q(a,b,d)$, or generalizations of these, to certain specific subsets of $\left(\zqz\right)^2$, while preserving the equidistribution result. \\

Another motivation for studying exponential sums restricted to multiplicative subgroups comes from the article \cite{shkredov}, where considerations on sums over subgroups of $\F_p^\times$ lead to a new upper bound on Heilbronn's exponential sums. However, in the latter the size of the subgroups grows with $p$, so our problem will be quite different since we will be working with sums indexed by a subgroup of \textit{fixed cardinality}.

\subsection{Statement of the main result}
The study of the equidistribution of sets of sums of type  \eqref{xdegalungeom} and \eqref{xdegalunkloos} can be seen as a particular case of the following question: given a sequence $\left(\mathcal F_q\right)_{q \in \mathcal A_d}$ indexed by the $d$-admissible integers (Definition \ref{dadm}), where each $\mathcal F_q$ is a set of Laurent polynomials with coefficients in $\zqz$, what can be said about the distribution of the sets of sums
\begin{equation} \label{denomf} \left\{ \sum_{\substack{x \in \zqzc \\ x^d = 1}} e\left(\frac{f(x)}{q}\right); \ f \in \mathcal F_{q} \right\} 
\end{equation}
as $q$ goes to infinity among the $d$-admissible integers? In \eqref{xdegalungeom}, it is the case where $\mathcal F_q = \left\{ a X; \  a \in \zqz \right\}$ whereas \eqref{xdegalunkloos} corresponds to the case where $\mathcal F_q = \left\{ a X + \frac{b}{X}; \ (a,b) \in (\zqz)^2 \right\}$.
As Theorem \ref{thconnu2} shows, both cases surprisingly lead to the same regions of equidistribution, at least in the case where $d$ is a prime number. Thus, it is natural to ask whether these results extend to more general Laurent polynomials.\\

 Our main result (Theorem \ref{thprincipal}) generalizes these known cases. In order to state it we first define a few extra quantities.
 
 \begin{defi} \label{vectprem}
 	Let $d \geqslant 1$ be an integer, and let $\mm = (m_1, \dots, m_n) \in \Z^n$. We say that $\mm$ is coprime with $d$ if all the $m_i$ are coprime with $d$.
 \end{defi}

\begin{defi}
	Given $\mm = (m_1, \dots, m_n) \in \Z^n$ and $q \geqslant 1$, we denote by $\mathcal F_{\mm, q} $ the following set of Laurent polynomials with coefficients in $\zqz$:
	
	$$\mathcal F_{\mm, q} := \left\{ a_1 X^{m_1} + a_2 X^{m_2} + \dots +a_n X^{m_n}; \ (a_1,\dots, a_n) \in (\zqz)^n\right\}$$
\end{defi}

In case (b) of Theorem \ref{thprincipal} and Proposition \ref{propavecmyersonfaible}, we will see that the key argument which explains why sets of type \eqref{xdegalungeom} and \eqref{xdegalunkloos} become equidistributed in the same regions of the complex plane is that the corresponding $\mm$ is coprime with $d$, for any $d$. Indeed, in case \eqref{xdegalungeom} we have $\mm = (1) \in \Z$ and in case \eqref{xdegalunkloos} we have $\mm = (1, -1) \in \Z^2$. \\
Even though we will also treat the case where $\mm$ is not coprime with $d$, this observation is the starting point that led us to the generalizations that we prove in the current work. Precisely we focus on the distribution of the following sets of sums:

\begin{equation} \label{formegen}
	\left\{ \sum_{\substack{x \in (\zqz)^\times \\ x^d = 1}} e\left(\frac{a_1 x^{m_1} + \dots + a_n x^{m_n}}{q}\right); \ (a_{1}, \dots ,a_{n}) \in (\zqz)^n \right\}
\end{equation}
 In other words, these are sets of exponential sums of the form \eqref{denomf} with $\mathcal F_q$ equal to $\mathcal F_{\mm,q}$ for some $\mm \in \Z^n$. \\
In fact we prove a more general result by showing that it is possible to impose strong restrictions on the set of parameters and still obtain equidistribution. Our main result is indeed concerned with sets of sums of the form
\begin{equation} \label{formeplusgen}
	\left\{ \sum_{\substack{x \in (\zqz)^\times \\ x^d = 1}} e\left(\frac{a_1 x^{m_1} + \dots + a_n x^{m_n}}{q}\right); \ (a_1, \dots, a_n) \in H^{(1)}_q \times \dots \times H^{(n)}_q \right\},
\end{equation}
where the $H_q^{(i)}$ are sufficiently large subgroups of $\zqzc$.\\

We finally define the relevant Laurent polynomials that will come into play in the description of the region of equidistribution of sets of type \eqref{formegen} and \eqref{formeplusgen} in the case where $\mm$ is not coprime with $d$.

\begin{defi}  \label{deffd} Let $d \geqslant 1$ and let $\mm =(m_1, \dots, m_n) \in \Z^n$. For all $i \in \{1, \dots, n\}$, we denote by $$d_i := \frac{d}{(d,m_i)}$$
	and by $\left(c_{j,k}^{(i)}\right)_{0 \leqslant j < \varphi(d_i)}$ the coefficients that appear in the reduction modulo $\phi_{d_i}$ of $X^k$ for each $k$ in $\{0, \dots, d-1\}$. In other words, these are the unique integers such that:
	$$\forall k \in \{0, \dots, d-1\}, \qquad X^k \equiv  \sum_{j=0}^{\varphi(d_i)-1} c_{j,k}^{(i)} X^j \mod \phi_{d_i}.$$	
	
	Then we define the Laurent polynomial $f_{d, \mm}$ as follows: 
	
	\begin{equation} \label{eqfdm}
	\begin{array}{ccccc}
	f_{d, \mm} & : & \T^{\varphi(d_1)+ \dots \varphi(d_n)} & \to & \C \\
	&& ((z_{1,j})_{0 \leqslant j < \varphi(d_1)}, \dots, (z_{n,j})_{0 \leqslant j < \varphi(d_n)}) & \mapsto & \displaystyle \sum ^{d-1}_{k=0} \prod_{i=1}^{n} \prod^{\varphi(d_i) -1}_{j=0} z_{i,j}^{c_{j,k}^{(i)}} \end{array}
	\end{equation}
\end{defi}

We can now give the statement of the main result. 

\begin{theo} \label{thprincipal}
	Let $d \geqslant 1$ be an integer and let $\mm=(m_1, \dots, m_n) \in \Z^n$. For all $d$-admissible integer $q$, we fix subgroups $H^{(1)}_q , \dots ,H^{(n)}_q$ of $\zqzc$. Then we have the following equidistribution results:
	\begin{enumerate}[label=$\mathrm{(\alph*)}$]	
		
		\item	\emph{The general case.} \\
		If there exists $\delta > 0$ such that the subgroups $H^{(1)}_q , \dots ,H^{(n)}_q$ satisfy the growth condition:
		\begin{equation} \label{growth1}
		\forall i \in \{1, \dots,n\}, \quad |H_q^{(i)}| \geqslant q^{\delta},
		\end{equation}
		
		then the sets \eqref{formeplusgen} become equidistributed in the image of the Laurent polynomial $f_{d, \mm}$ (Definition $\ref{deffd}$) with respect to the pushforward measure via $f_{d, \mm}$ of the probability Haar measure $\lambda$ on $\T^{\varphi(d_1) +\dots + \varphi(d_n)}$, as $q$ goes to infinity among the $d$-admissible integers. In other words, if we denote by $\mathcal I_{d, \mm}$ the image of $f_{d, \mm}$ and by $\mu := (f_{d, \mm})_* \lambda$, then for all continuous function $F \colon \mathcal I_{d,  \mm}  \to \C$,
		 	$$ \frac{1}{\prod_{i= 1}^{n}|H_{q}^{(i)}|}  \sum_ {a_1\in H_{q}^{(1)}} \cdots \sum_{a_n \in H_q^{(n)}}  F\left( \sum_{\substack{x \in (\zqz)^\times \\ x^d = 1}} e\left(\frac{a_1 x^{m_1} + \dots + a_n x^{m_n}}{q}\right)\right) \tend{\substack{ q \to \infty \\ q \in \mathcal A_d}} \int_{\mathcal I_{d, \mm}}^{} F \dd \mu.$$
		
		\item \emph{When $\mm$ is coprime with $d$.} \\
	If there exists $\delta > 0$ such that the subgroups $H^{(1)}_q , \dots ,H^{(n)}_q$ satisfy the growth condition:
		\begin{equation} \label{growth2}
		\forall q \in \mathcal A_d,\  \exists i \in \{1, \dots,n\}, \quad |H_q^{(i)}| \geqslant q^{\delta},
		\end{equation}
		then the sets \eqref{formeplusgen} become equidistributed in the image of the Laurent polynomial $g_d$ (Definition $\ref{defgd}$) with respect to the pushforward measure via $g_d$ of the probability Haar measure on $\T^{\varphi(d)}$, as $q$ goes to infinity among the $d$-admissible integers.
		
	\end{enumerate}
\end{theo}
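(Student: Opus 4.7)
My plan is to realize $S(\mathbf a):=\sum_{x^d=1}e\bigl((a_1x^{m_1}+\dots+a_nx^{m_n})/q\bigr)$ as $f_{d,\mm}$ evaluated at an explicit point of a torus, reduce the equidistribution statement to Haar equidistribution on that torus via Weyl's criterion, and finish with a bound on exponential sums over multiplicative subgroups of $\zqzc$.

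\textbf{Reformulation.} Given a $d$-admissible modulus $q=p^\alpha$, Hensel's lemma applied to the primitive $d$-th roots of unity in $\F_p$ produces an element $\zeta\in\zqzc$ of order $d$ satisfying $\phi_d(\zeta)\equiv 0 \pmod q$. Setting $\eta_i:=\zeta^{m_i}$, one obtains $\phi_{d_i}(\eta_i)\equiv 0 \pmod q$, and the reduction $X^k\equiv \sum_j c^{(i)}_{j,k}X^j \pmod{\phi_{d_i}}$ allows us to replace $a_i\eta_i^k$ by $\sum_j c^{(i)}_{j,k}(a_i\eta_i^j)$ in the argument of $e(\cdot/q)$, giving
\begin{equation*}
S(\mathbf a)=\sum_{k=0}^{d-1}\prod_{i=1}^n\prod_{j=0}^{\varphi(d_i)-1}e(a_i\eta_i^j/q)^{c^{(i)}_{j,k}}=f_{d,\mm}\bigl(w(\mathbf a)\bigr),\qquad w_{i,j}(a_i):=e(a_i\eta_i^j/q)\in\T.
\end{equation*}
Since $\mu=(f_{d,\mm})_*\lambda$ and $f_{d,\mm}$ is continuous, it suffices to prove that $w(\mathbf a)=(w_{i,j}(a_i))_{i,j}$ equidistributes in $\T^N$ (with $N=\sum_i\varphi(d_i)$) for Haar measure as $\mathbf a$ ranges in $\prod_i H_q^{(i)}$.

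\textbf{Weyl sums and case split.} By Weyl's criterion, this torus equidistribution is equivalent to decay, for every nonzero $\mathbf t=(t_{i,j})\in\Z^N$, of
\begin{equation*}
\prod_{i=1}^n\frac{1}{|H_q^{(i)}|}\sum_{a_i\in H_q^{(i)}}e\!\left(\frac{a_i\beta_i}{q}\right),\qquad\beta_i:=\sum_j t_{i,j}\eta_i^j.
\end{equation*}
For any index $i$ with $(t_{i,j})_j\neq 0$, the polynomial $\sum_j t_{i,j}X^j$ is a nonzero element of $\Z[X]/(\phi_{d_i})\cong\Z[\eta]$ (for $\eta$ a primitive $d_i$-th root of unity in $\C$); its norm is a nonzero integer depending only on $\mathbf t$ and $d$, so under the embedding $\Z[\eta]\hookrightarrow\Z_p$ carrying $\eta$ to $\eta_i$, its image $\beta_i$ has $p$-adic valuation bounded independently of $q$. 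Hence for $q$ large enough (in terms of $\mathbf t$) one has $\beta_i\not\equiv 0\pmod q$. In case (a), I pick any index $i_0$ with $(t_{i_0,j})_j\neq 0$ and use the uniform growth hypothesis on $H_q^{(i_0)}$ to make the factor at $i_0$ tend to $0$, the remaining factors being $\leq 1$ in modulus. In case (b), the equalities $d_i=d$ and $c^{(i)}_{j,k}=c_{j,k}$ for every $i$ give the factorization $f_{d,\mm}(\mathbf z)=g_d\bigl(\prod_i z_{i,0},\dots,\prod_i z_{i,\varphi(d)-1}\bigr)$, and since the projection $\T^N\to\T^{\varphi(d)}$, $\mathbf z\mapsto(\prod_i z_{i,j})_j$, is Haar-preserving it is enough to prove equidistribution of the collapsed tuple $y(\mathbf a)=\bigl(\prod_i w_{i,j}(a_i)\bigr)_j$ in the smaller torus $\T^{\varphi(d)}$. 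The Weyl exponent $\beta_i=\sum_j t_j\eta_i^j$ is then nonzero modulo $q$ for \emph{every} $i$, so decay at the single index provided by \eqref{growth2} is enough.

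\textbf{Main obstacle.} The remaining, and most delicate, ingredient is a uniform bound of the shape
\begin{equation*}
\left|\frac{1}{|H|}\sum_{a\in H}e(ac/q)\right|=o(1)\qquad(q\to\infty,\ q\in\mathcal A_d)
\end{equation*}
for $H\leq\zqzc$ with $|H|\geq q^\delta$ and $c\in\zqz$ of $p$-adic valuation $<\alpha$. For $q=p$ prime this is the Bourgain--Glibichuk--Konyagin theorem on exponential sums over multiplicative subgroups. The genuine difficulty will be the prime-power case $q=p^\alpha$ uniformly in $p$ and $\alpha$: writing $\gcd(c,q)=p^s$, the parameter $s$ is controlled by the $p$-adic valuation of the underlying algebraic integer and so is bounded independently of $q$, which reduces the sum to one modulo $p^{\alpha-s}$ while the image of $H$ in $(\Z/p^{\alpha-s}\Z)^\times$ still has size at least $|H|/p^s$, a positive power of the effective modulus. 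Establishing the requisite subgroup bound modulo prime powers — by a Weil/Katz type input when $\alpha-s=1$, and by elementary or Heilbronn-style techniques (in the spirit of \cite{shkredov, dubi}) when $\alpha-s\geq 2$ — will be the technical heart of the proof.
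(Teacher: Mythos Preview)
Your reduction is essentially the paper's: write $S(\mathbf a)=f_{d,\mm}(w(\mathbf a))$ (resp.\ $g_d$ of the collapsed tuple) and prove Haar equidistribution of the torus point via Weyl's criterion, with the product structure over $i$ allowing a single factor to force decay. Your norm-of-algebraic-integer argument to bound $v_p(\beta_i)$ independently of $q$ is a clean alternative to the paper's B\'ezout/resultant computation (Proposition~\ref{lembezout}); both yield that $p^{v_p(\beta_i)}$ is bounded by a constant depending only on $\mathbf t$ and $d$, which is exactly what is needed to pass to an invertible parameter modulo $q'=p^{\alpha-s}$ while keeping $|H'|\geq (q')^{\delta'}$.

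The one substantive gap is in your ``Main obstacle'' paragraph. The bound
\[
\Bigl|\frac{1}{|H|}\sum_{a\in H}e(ac/q')\Bigr|=o(1)\qquad\text{for }H\leq(\Z/q'\Z)^\times,\ |H|\geq(q')^{\delta'},\ c\in(\Z/q'\Z)^\times
\]
is \emph{not} reachable by the tools you propose when $q'=p^{\alpha-s}$ with $\alpha-s\geq 2$. Weil/Katz inputs and the elementary or Heilbronn-type arguments of \cite{shkredov,dubi} do not give cancellation for subgroups of size $(q')^{\delta'}$ with $\delta'<\tfrac12$; the paper emphasizes precisely that breaking the square-root barrier here requires additive combinatorics. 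The correct black box is Bourgain's extension of Bourgain--Glibichuk--Konyagin to arbitrary moduli (Theorem~\ref{bourgain}, from \cite{bourgainarbitrary}; the prime-power case is Part~I of that paper, building on \cite{bourgainchang}). Once that theorem is invoked, your reduction goes through exactly as written; without it the argument stalls at subgroups larger than $\sqrt{q}$.
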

For instance, if one takes $\mm = (1,-1)$, the second case of this theorem states that the sets 
 \begin{equation} \label{kloossubgroups}
 	\left\{\K_q(a,b,d);  \ (a,b) \in H_q^{(1)} \times H_q^{(2)} \right\}
 \end{equation}
 satisfy the same equidistribution result as the sets of Figure \ref{3hypo5branches}, as soon as the $H_q^{(i)}$ satisfy the growth condition \eqref{growth2}. In other words, restricting the parameters $a,b$ to large enough multiplicative subgroups does not introduce any bias in the distribution of the restricted Kloosterman sums, and still ensures equidistribution with respect to the same measure as in Theorem \ref{thconnu2}. We give an illustration of this fact in section \ref{illusthprincipal}.

\begin{rem}
	We will also discuss the possibility to fix some of the parameters, while letting the other vary. For instance, this means that we will consider sets of the form \eqref{formeplusgen} but with the condition $(a_1, \dots, a_n) \in H^{(1)}_q \times \dots \times H^{(n)}_q$ replaced by
	$(a_{i_1}, \dots,a_{i_s}) \in H_q^{(i_1)} \times \dots \times H_q^{(i_s)}$ for some $s <n$, while the other parameters $a_j$ are fixed integers (see Remark \ref{fix}).
\end{rem}

Finally, the equidistribution result of Theorem \ref{thprincipal}, concerning sets of type \eqref{formeplusgen}, admits an analogue for sets of type \eqref{formegen}, via a simple adaptation of the proof. Precisely, we will obtain the following proposition, which generalizes \cite[Theorem 7]{visual} and \cite[Theorem 6.3]{periods}.

\begin{propamoi}  \label{propavecmyersonfaible}
	Let $d \geqslant 1$, and let $\mm = (m_1, \dots, m_n) \in \Z^n$.
\begin{enumerate}[label=$\mathrm{(\alph*)}$]

\item \emph{The general case.} \\
The sets of sums \eqref{formegen} become equidistributed in the image of the Laurent polynomial $f_{d, \mm}$ (from Definition $\ref{deffd}$) with respect to the pushforward measure via $f_{d, \mm}$ of the probability Haar measure on $\T^{\varphi(d_1) + \dots + \varphi(d_n)}$, as $q$ tends to infinity among the $d$-admissible integers.

\item  \emph{When $\mm$ is coprime with $d$.} \\
 Let $s \in \{1, \dots, n\}$ and let $\{i_1, \dots, i_s\} \subseteq \{1, \dots, n\}$. We fix $n-s$ integers $a_i$ for $i \in \{1, \dots, n\} \setminus \{i_1, \dots, i_s\}$. Then the sets of sums

$$	\left\{ \sum_{\substack{x \in (\zqz)^\times \\ x^d = 1}} e\left(\frac{a_1 x^{m_1} + \dots + a_n x^{m_n}}{q}\right); \ (a_{i_1}, \dots ,a_{i_s}) \in (\zqz)^s  \right\}$$
become equidistributed in the image of $g_d$ (with respect to the pushforward measure via $g_d$ of the probability Haar measure on $\T^{\varphi(d)}$) as $q$ goes to infinity among the $d$-admissible integers.
\end{enumerate}
\end{propamoi}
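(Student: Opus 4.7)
The plan is to run the moment method used in the proof of Theorem \ref{thprincipal}, simplified by the fact that all additive characters of $\zqz$ (not only those of a subgroup) are available. Since both $\mathcal I_{d,\mm}$ and the image of $g_d$ are compact subsets of $\C$, Stone-Weierstrass reduces each claim to proving that, for every $(k,\ell) \in \N^2$, the empirical mixed moment of order $(k,\ell)$ of the corresponding family of sums converges, as $q \to \infty$ along $\mathcal A_d$, to the $(k,\ell)$-moment of the candidate limit measure --- that is, $(f_{d,\mm})_*\lambda$ in case (a) and $(g_d)_*\lambda$ in case (b).

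\textbf{Case (a).} Writing $S_q(\vec a, d) := \sum_{x^d = 1} e\bigl((a_1 x^{m_1} + \dots + a_n x^{m_n})/q\bigr)$, I would expand $S_q(\vec a,d)^k\,\overline{S_q(\vec a,d)}^{\ell}$ as a sum over $(\vec x, \vec y) \in \mu_d(\zqzc)^{k+\ell}$ and average $\vec a$ over $(\zqz)^n$. The orthogonality relation $\sum_{a \in \zqz} e(at/q) = q \cdot \mathbf{1}_{t \equiv 0 \bmod q}$ turns the moment into the count
$$
\#\Bigl\{(\vec x,\vec y)\in \mu_d(\zqzc)^{k+\ell} : \sum_{j} x_j^{m_i} \equiv \sum_{j} y_j^{m_i} \pmod q \text{ for every } i = 1, \dots, n\Bigr\}.
$$
Fixing an identification $\mu_d(\zqzc) \simeq \mu_d(\C)$ via a primitive $d$-th root of unity modulo $p$, each side of such a congruence is a bounded element of $\Z[\zeta_d]$; for $q$ large the congruence therefore becomes an equality in $\Z[\zeta_d]$, and the count stabilises. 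On the other hand, expanding $\int_{\T^{\varphi(d_1)+\dots+\varphi(d_n)}} f_{d,\mm}^k\,\overline{f_{d,\mm}}^{\ell}\,d\lambda$ via Definition \ref{deffd} and orthogonality of characters on each factor $\T^{\varphi(d_i)}$ yields the same count, because the coefficients $c_{j,k}^{(i)}$ encode exactly the expansion of $\zeta_{d_i}^k$ on the $\Z$-basis $(1, \zeta_{d_i}, \dots, \zeta_{d_i}^{\varphi(d_i)-1})$ of $\Z[\zeta_{d_i}]$. Matching the two counts proves (a).

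\textbf{Case (b).} With only $s$ free parameters, orthogonality enforces only the $s$ congruences $\sum_j x_j^{m_{i_k}} \equiv \sum_j y_j^{m_{i_k}} \pmod q$, while each fixed $a_i$ contributes a factor $e\bigl(a_i(\sum_j x_j^{m_i} - \sum_j y_j^{m_i})/q\bigr)$. The decisive algebraic input is that, since each $m_i$ is coprime with $d$, the map $\zeta \mapsto \zeta^{m_i}$ extends to a Galois automorphism of $\Q(\zeta_d)/\Q$; hence for $q$ large the equality $\sum_j \tilde x_j^{m_{i_1}} = \sum_j \tilde y_j^{m_{i_1}}$ in $\Z[\zeta_d]$ propagates under Galois to the analogous equality for every exponent $m_i$. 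Consequently the extra factors all equal $1$, the other $s-1$ congruences become automatic, and, after the bijective substitution $u_j := x_j^{m_{i_1}},\ v_j := y_j^{m_{i_1}}$ on $\mu_d$, the moment reduces to $\#\{(\vec u, \vec v) \in \mu_d^{k+\ell} : \sum_j u_j = \sum_j v_j\}$, which is exactly the $(k,\ell)$-moment of $(g_d)_*\lambda$ by the computation underlying Theorem \ref{thconnu1}.

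\textbf{Main obstacle.} The technical heart is the bookkeeping in case (a): one must verify carefully that the character-orthogonality computation on $\T^{\varphi(d_1)+\dots+\varphi(d_n)}$ matches, factor by factor, the cyclotomic reduction defining the $c^{(i)}_{j,k}$, and that this matching is insensitive to the chosen identification $\mu_d(\zqzc) \simeq \mu_d(\C)$. In case (b), the only subtlety is making the threshold on $q$ uniform in the fixed integers $a_i$, which is harmless since $t = 0$ in $\Z[\zeta_d]$ implies $a_i t \equiv 0 \pmod q$ regardless of $a_i$.
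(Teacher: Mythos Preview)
Your proposal is correct, and it takes a genuinely different route from the paper.

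The paper proves Proposition~\ref{propavecmyersonfaible} by mirroring its proof of Theorem~\ref{thprincipal}: it first uses the factorisation $\theta_{\mm,q}(\vec a)=f_{d,\mm}(z_{i,j})$ (resp.\ $g_d(z_j)$) from Propositions~\ref{reducmod1case1} and~\ref{reducmod1} to reduce to the equidistribution modulo $1$ of the vectors $\big(a_i (w_q^{m_i})^j/q\big)_{i,j}$; then it applies Weyl's criterion and observes that each non-trivial Weyl sum factorises as a product of terms $\frac{1}{q}\sum_{a\in\zqz} e\!\big(af(w_q^{m_i})/q\big)$, which vanish \emph{exactly} for $q$ large by Myerson's Lemma~\ref{myerson}. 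In case~(b) the fixed parameters simply contribute a unimodular factor in front of this product.

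You instead attack the pushforward measure directly via Stone--Weierstrass and mixed moments, using additive orthogonality on $\zqz$ to convert each $(k,\ell)$-moment into a count of tuples of $d$-th roots, and then arguing that for $q$ large the resulting congruences in $\zqz$ are equivalent to equalities in $\Z[\zeta_d]$ (resp.\ $\Z[\zeta_{d_i}]$). This last step is again Lemma~\ref{myerson}/Proposition~\ref{lembezout}(a), so the algebraic core is identical; what differs is the packaging. Your approach buys a very transparent combinatorial description of the limiting moments as counts of root-of-unity relations, and in case~(b) the Galois-conjugacy argument showing that a single congruence forces all the others is a clean explanation of why fixed parameters and redundant free parameters do not affect the limit. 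The paper's approach buys a stronger intermediate statement (joint equidistribution mod $1$ of the full vector) and avoids the bookkeeping of matching the torus integral term-by-term with the cyclotomic reduction --- precisely the step you flag as the main obstacle in~(a).
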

If one takes $\mm$ to be equal to $(1) \in \Z$ or $(1, -1) \in \Z^2$, then the second case of this proposition allows one to recover Theorem \ref{thconnu1} as well as Theorem \ref{thconnu2} extended to values of $d$ which are not prime. In particular, the asymptotic behaviour shown in Figure \ref{3hypo5branches} is an illustration of Proposition \ref{propavecmyersonfaible} (b) in the case of Kloosterman sums. We give other examples of application in section \ref{illusprop}.
\begin{rem}
	The Laurent polynomial $g_d$ does not depend on $\mm$, as long as $\mm$ is coprime with $d$. This implies that the region of equidistribution almost does not depend on the shape of the numerators in the exponentials: it will be the same for any $\mm$ coprime with $d$. This explains why \cite[Theorem 7]{visual} and \cite[Theorem 6.3]{periods} give rise to the same kind of figures, and this leads to many other examples. Similarly, the Laurent polynomial $f_{d, \mm}$ only depends on $\mm$ through the list of the gcd's $(d,m_i)$.
\end{rem}

\subsection{Strategy of the proof of Theorem \ref{thprincipal}.} {The first step consists in reducing both cases to two statements about the equidistribution modulo $1$ of some sets of arithmetic nature: propositions \ref{equimyersoncase1} and \ref{equimyersoncase2}. These two propositions can be seen as a generalization of Myerson's lemma\footnote{The name comes from the fact that this is an adaptation of an argument that is used in the proof of \cite[Theorem 12]{myerson}.}, which asserts that the sets \begin{equation} \label{formeaveczqz1}
\left\{\frac{a}{q} \left(1, w_q, \dots , w_q^{\varphi(d)-1}\right); \ a \in \zqz\right\},
\end{equation} where $w_q$ is a primitive $d$-th root of unity modulo $q$, become equidistributed modulo $1$ as $q$ goes to infinity among the $d$-admissible integers. This lemma is proved using a version of Weyl's equidistribution criterion and thus reduces to the following exponential sum estimate:
\begin{lem}[Myerson's lemma, {\cite[Lemma 6.2]{periods}}] \label{myerson} Let $d \geqslant 1$ be an integer, and let $f \in \Z[X] \setminus \{0\}$ be a polynomial of degree strictly less than $\varphi(d)$. Then there exists an integer $m_f$ such that for all $d$-admissible integer $q$ such that $q > m_f$, for any element $w_q$ of order $d$ in $\zqzc$,
	$$\sum_{a\in \zqz}^{} e\left( \frac{f( w_q)}{q} a\right) =0.$$
\end{lem}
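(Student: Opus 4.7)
The plan is first to recognize the sum on $a$ as a complete additive character sum, which by orthogonality vanishes unless $f(w_q) \equiv 0 \pmod{q}$:
$$\sum_{a \in \zqz} e\left(\frac{f(w_q)}{q}\, a\right) = q \cdot \ind_{q \mid f(w_q)}.$$
The lemma therefore reduces to the purely arithmetic statement that for all sufficiently large $d$-admissible $q$, one has $q \nmid f(w_q)$, with the threshold depending only on $f$ and not on the particular $w_q$ of order $d$.

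The next step is to establish that $\phi_d(w_q) \equiv 0 \pmod{q}$. Writing $q = p^\alpha$ with $p \equiv 1 \pmod{d}$, the reduction of $X^d - 1$ modulo $p$ has no repeated roots (since $p \nmid d$), and the factorization $X^d - 1 = \prod_{k \mid d} \phi_k(X)$ together with the cyclic structure of $\F_p^\times$ implies that the $\varphi(d)$ elements of order $d$ in $\F_p^\times$ are precisely the simple roots of $\phi_d \pmod p$. Hensel's lemma then lifts each of them uniquely to a root of $\phi_d \pmod{p^\alpha}$, producing $\varphi(d)$ distinct roots of $\phi_d$ in $\zqzc$; since there are exactly $\varphi(d)$ elements of order $d$ in $\zqzc$ under the $d$-admissibility hypothesis, these two sets coincide. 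In particular $w_q$ lies in both, so $\phi_d(w_q) \equiv 0 \pmod{q}$.

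The conclusion then comes from a Bezout argument in $\Q[X]$. Since $\phi_d$ is irreducible over $\Q$ and $\deg f < \varphi(d) = \deg \phi_d$, the polynomials $f$ and $\phi_d$ are coprime in $\Q[X]$, so there exist $U, V \in \Z[X]$ and a nonzero integer $N$ (depending only on $f$) with $U(X) f(X) + V(X) \phi_d(X) = N$. Evaluating at any integer lift of $w_q$ and reducing modulo $q$: if we had $q \mid f(w_q)$, then combined with $q \mid \phi_d(w_q)$ this would force $q \mid N$, hence $q \leqslant |N|$. Taking $m_f := |N|$ thus ensures $q \nmid f(w_q)$ for every $q > m_f$, completing the proof. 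The only nonroutine step is the Hensel lifting used to promote $\phi_d(w_q) \equiv 0$ from mod $p$ to mod $p^\alpha$; the rest is a standard interplay between orthogonality of additive characters and the irreducibility of cyclotomic polynomials over $\Q$.
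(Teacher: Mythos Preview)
Your proof is correct and follows the same approach as the paper, whose argument is split between Lemma~\ref{affirmation} (the Hensel step giving $\phi_d(w_q)\equiv 0 \pmod q$) and Proposition~\ref{lembezout}~(a) (the B\'ezout step), with the orthogonality reduction left implicit. Your Hensel argument is packaged slightly differently---lifting simple roots of $\phi_d$ from $\F_p$ to $\Z/p^\alpha\Z$ and then counting, rather than the paper's lift to a $d$-th root of unity in $\Z_p$---but the substance is the same; just note that your counting step tacitly uses that each Hensel lift has order exactly $d$ (clear since its reduction mod $p$ does and $(d,p)=1$).
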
 }

Propositions \ref{equimyersoncase1} and \ref{equimyersoncase2} essentially amount to generalizing the equidistribution of sets of type \eqref{formeaveczqz1} to sets of the form 
\begin{equation} \label{formeavechq1}
\left\{\frac{a}{q} \left(1, w_q, \dots , w_q^{\varphi(d)-1}\right); \ a \in H_q\right\},
\end{equation}
where $H_q$ is a large enough subgroup of $\zqzc$. Precisely, a particular case of Proposition \ref{equimyersoncase2} is the following corollary which generalizes \cite[Lemma 6.2]{periods}.

\begin{cor} \label{coromyerson} Let $d \geqslant 1$ and let $\delta > 0$. For all $q \in \mathcal A_d$, let $w_q$ be an element of order $d$ in $\zqzc$. For each of these values of $q$, we also fix a subgroup $H_q$ of $\zqzc$. If the following growth condition is satisfied:  $$ |H_q| \geqslant q^\delta,$$ 
	then the sets \eqref{formeavechq1} become equidistributed modulo $1$ as $q$ tends to infinity among the $d$-admissible integers.
\end{cor}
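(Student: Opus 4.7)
The plan is to apply Weyl's equidistribution criterion, to refine Myerson's lemma by controlling the $p$-adic valuation of $f(w_q)$ via a Bezout identity in $\Q[X]$, and then to appeal to known subgroup exponential sum bounds.

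By Weyl's criterion on $\T^{\varphi(d)}$, the equidistribution to be established is equivalent to
$$\frac{1}{|H_q|}\sum_{a\in H_q} e\!\left(\frac{a\,f(w_q)}{q}\right) \tend{\substack{q\to\infty\\ q\in\mathcal A_d}} 0$$
for every $f(X) = k_0 + k_1 X + \dots + k_{\varphi(d)-1} X^{\varphi(d)-1} \in \Z[X]\setminus\{0\}$; I fix such an $f$.

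The proof of Lemma \ref{myerson} only gives $q\nmid f(w_q)$, which is insufficient here. For the quantitative control needed, I would use that $\phi_d$ is irreducible of degree $\varphi(d)>\deg f$ over $\Q$, hence coprime to $f$ in $\Q[X]$. Clearing denominators in a Bezout identity yields $u,v\in\Z[X]$ and a nonzero integer $c=c(f,d)$ with $u(X)f(X)+v(X)\phi_d(X)=c$. Evaluating at $w_q$ and using $\phi_d(w_q)\equiv 0\pmod q$ gives $u(w_q)f(w_q)\equiv c\pmod q$, which for $q=p^\alpha > |c|$ forces $v_p(f(w_q))\leq v_p(c)$. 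Hence, outside finitely many $q\in\mathcal A_d$, I can write $f(w_q)=p^{e_q}u_q$ with $p\nmid u_q$ and $p^{e_q}\leq|c|$. Setting $q':=q/p^{e_q}\geq q/|c|$ and noting $\gcd(u_q,q')=1$, the exponential sum equals $\sum_{a\in H_q} e(au_q/q')$. Reduction modulo $q'$ sends $H_q$ to a subgroup $\overline H_q\leq(\Z/q'\Z)^\times$ whose kernel has size at most $p^{e_q}\leq|c|$, so $|\overline H_q|\geq|H_q|/|c|\geq q^\delta/|c|$, and the required decay reduces to a bound of the form $\sum_{a\in\overline H_q} e(au_q/q')=o(|\overline H_q|)$.

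I expect the main obstacle to be this final subgroup sum bound: it follows from the Bourgain-Glibichuk-Konyagin theorem when $q'$ is prime, and from its extensions to prime-power moduli in general, but in the prime-power case one must rule out degenerate situations (e.g. $\overline H_q$ concentrating in the pro-$p$ subgroup $1+p\,\Z/q'\Z$). The polynomial lower bound $|H_q|\geq q^\delta$ is precisely what is needed to preclude such concentration uniformly as $q\to\infty$.
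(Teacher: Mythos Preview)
Your proposal is correct and follows essentially the same route as the paper: Weyl's criterion, the Bezout identity between $f$ and $\phi_d$ to bound $v_p(f(w_q))$ uniformly (this is exactly Proposition~\ref{lembezout}), reduction of the sum to a subgroup $\overline{H}_q$ of $(\Z/q'\Z)^\times$ with $|\overline{H}_q|\geq (q')^{\delta'}$, and then an appeal to Bourgain-type bounds. Your worry at the end about ``degenerate situations'' such as $\overline{H}_q$ concentrating in $1+p\,\Z/q'\Z$ is unnecessary: the paper invokes Bourgain's arbitrary-modulus theorem (Theorem~\ref{bourgain}, from \cite{bourgainarbitrary}), which requires only the polynomial lower bound $|H|\geq q^{\delta}$ and no further non-degeneracy hypothesis, so the last step closes cleanly.
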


The crucial input in order to obtain convergence towards zero when applying Weyl's criterion is the following exponential sum estimate, which relies on a deep result due to Bourgain and stated in Theorem \ref{bourgain}. 
\begin{prop} \label{myersongen}
	Let $d \geqslant 1$ and let $f\in \Z[X] \setminus \{0\}$ be a polynomial of degree strictly less than $\varphi(d)$. \\
	Let $\delta > 0$. Then, there exists $\varepsilon = \varepsilon(\delta) > 0$, depending only on $\delta$, such that for all $d$-admissible integer $q$ large enough, for all subgroup $H_q$ of $\zqzc$ satisfying $|H_q| \geqslant q^{\delta}$, and for any element $w_q$ of order $d$ inside $\zqzc$, we have
	\begin{equation} \label{sommefwq}
	\left| \sum_{a \in H_{q}}^{} e \left( \frac{af(w_{q})}{{q}}\right) \right| \ll_{f, \delta} \frac{|H_q|}{q^{\varepsilon}} \cdot 
	\end{equation}	
\end{prop}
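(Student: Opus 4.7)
The plan is to deduce the estimate directly from Bourgain's bound on exponential sums over multiplicative subgroups (Theorem \ref{bourgain}) applied with the frequency $a = f(w_q)$: that theorem immediately delivers a saving of the shape $|H_q|\, q^{-\varepsilon(\delta)}$ as soon as one knows that $f(w_q)$ is (at least essentially) a unit modulo $q$. The real content of the proof is therefore an algebraic step showing that $v_p(f(w_q))$ is bounded in terms of $f$ alone, and in fact that $f(w_q)$ is invertible modulo $q$ for all but finitely many primes $p$.

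For this algebraic step I would exploit the hypothesis that $f \neq 0$ and $\deg f < \varphi(d) = \deg \phi_d$: it implies that $f$ and $\phi_d$ are coprime in $\Q[X]$, so Bezout's identity in $\Q[X]$ followed by clearing denominators produces $u, v \in \Z[X]$ and a nonzero integer $N_f$, depending only on $f$, such that
$$u(X) f(X) + v(X) \phi_d(X) = N_f.$$
Since $q = p^\alpha$ is $d$-admissible we have $p \equiv 1 \pmod d$, so $X^d - 1$ has only simple roots modulo $p$; Hensel's lemma then lifts each primitive $d$-th root of unity in $\F_p$ uniquely to a root of $\phi_d$ modulo $p^\alpha$, and a short argument identifies this lift with $w_q$ itself (any element of order $d$ in $\zqzc$ reduces to a primitive $d$-th root of unity in $\F_p$, and the Hensel lift to a root of $X^d - 1$ modulo $p^\alpha$ is unique). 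Consequently $\phi_d(w_q) \equiv 0 \pmod{p^\alpha}$, and substituting into Bezout's identity yields $u(w_q) f(w_q) \equiv N_f \pmod{p^\alpha}$, which forces $v_p(f(w_q)) \le v_p(N_f)$. In particular $f(w_q)$ is a unit in $\zqzc$ as soon as $p \nmid N_f$.

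With this in hand, for primes $p \nmid N_f$ the estimate follows from a single application of Bourgain's theorem to the subgroup $H_q$ and the unit $f(w_q)$. For the finitely many exceptional primes $p \mid N_f$, only sub-families of the form $q = p^\alpha$ with $p$ fixed and small and $\alpha \to \infty$ remain to be treated: writing $f(w_q) = p^t u$ with $0 \le t \le v_p(N_f)$ and $\gcd(u, p) = 1$, the additive character $a \mapsto e(a f(w_q)/q)$ depends on $a$ only modulo $p^{\alpha - t}$, so one reduces $H_q$ to its image $\overline{H_q}$ in $(\Z/p^{\alpha - t}\Z)^\times$ (at the cost of a factor of at most $p^t$ from the kernel of reduction) and applies Bourgain with modulus $p^{\alpha - t}$ and frequency $u$ coprime to $p$. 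Since $t$ is bounded in terms of $f$, the growth assumption $|H_q| \geq q^\delta$ guarantees $|\overline{H_q}| \geq (p^{\alpha - t})^{\delta/2}$ for $\alpha$ large enough, which is the input Bourgain needs; the resulting bound has a slightly smaller $\varepsilon$ and a constant depending on $f$, and taking the minimum over the finitely many exceptional primes yields a single $\varepsilon(\delta) > 0$.

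The main obstacle in carrying out this plan is the Bezout/Hensel preparation: once $f(w_q)$ is known to be (nearly) a unit modulo $q$, all the analytic work is outsourced to Bourgain's theorem. Conceptually the argument is a quantitative analogue of the classical Lemma \ref{myerson}, where exact orthogonality of additive characters over $\zqz$ produces a perfect vanishing as soon as $f(w_q)$ is invertible modulo $q$; here Bourgain's sum-product based bound replaces that orthogonality, providing the required power saving over a sufficiently large subgroup $H_q \subsetneq \zqzc$.
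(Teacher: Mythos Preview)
Your proposal is correct and follows essentially the same approach as the paper: the Bezout identity $u f + v\phi_d = N_f$ combined with $\phi_d(w_q)\equiv 0 \pmod q$ (via Hensel) to bound $v_p(f(w_q))$, then reduction of the modulus from $q=p^\alpha$ to $p^{\alpha-t}$ where the frequency becomes a unit, and finally Bourgain's theorem applied to the image subgroup with exponent $\delta/2$. The only cosmetic difference is that the paper does not split into the cases $p\nmid N_f$ and $p\mid N_f$: it treats all $d$-admissible $q$ uniformly by simply writing $f(\tilde w_q)=p^{\beta_q} r_q$ and using $p^{\beta_q}\leqslant N_f$ throughout, so your case analysis is unnecessary (and your ``minimum over finitely many exceptional primes'' is not needed either, since the same $\varepsilon(\delta/2)$ from Bourgain works for every prime).
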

This estimate allows us to complete the proofs of propositions \ref{equimyersoncase1} and \ref{equimyersoncase2}, thus proving Theorem \ref{thprincipal}. Let us stress that the application of Bourgain's theorem is not straightforward, and the second main ingredient in the proof of Proposition \ref{myersongen} is a good understanding of the $p$-adic valuation of $f(w_q)$ (see Proposition \ref{lembezout}).

\paragraph{Structure of the paper.} Section \ref{grandesectionreduc} is devoted to reducing the proof of the main result to statements about equidistribution modulo $1$. In Section \ref{secmyerson}, we prove the key exponential sum estimate (Proposition \ref{myersongen}) which allows us to obtain the convergence towards zero in the applications of Weyl's criterion of Section \ref{seclemmamod1}. In the latter, we establish the needed properties of equidistribution modulo $1$ and conclude the proof of Theorem \ref{thprincipal}. Finally in Section \ref{notables}, we give examples and illustrations.

\subsection{Notations} \label{secnot}
\begin{itemize}
	\setlength\itemsep{0em}
	\item The number of elements of a finite set $X$ is denoted by $|X|$.
	
	\item If $a,b \in \Z$, we denote by $(a,b)$ their gcd (greatest (positive) common divisor).
	
	\item If $a \in \Z$ and $p$ is a prime number, we denote by $v_p(a)$ the $p$-adic valuation of $a$.
	
	 \item If $d$ is a positive integer, $\phi_d$ denotes the $d^{\text{th}}$ cyclotomic polynomial over $\Q$ and $\varphi(d)$ its degree.
	
	\item $\mu$ denotes the Möbius function.
	
	\item If $x \in \R$, we denote by $\{x\} := x - \lfloor x \rfloor $ its fractional part. If $\xx = (x_1, \dots, x_m) \in \R^m$,  we denote by $\{ \xx \} := (\{x_1\}, \dots,\{x_m\} )$ the fractional part of $\xx$ taken componentwise.
	
	\item Let $(X, \mathscr A)$ and $(Y, \mathscr B)$ be two measurable spaces, and let $\lambda$ be a measure on the former. If $f \colon X \to Y$ is $(\mathscr A, \mathscr B)$-measurable, then we denote by $f_*\lambda$ the pushforward measure of $\lambda$ via $f$. It is defined as the measure on $(Y, \mathscr B)$ such that $(f_*\lambda)(B) = \lambda(f^{-1}(B))$ for all $B \in \mathscr B$.
	
	\item $\T$ denotes the group of complex numbers of modulus $1$.
\end{itemize}
The Jupyter Notebook which was written to obtain most of the figures of this article is available in html format at the URL: \url{http://perso.eleves.ens-rennes.fr/people/theo.untrau/sumssubgroups}\\

\textbf{Acknowledgements.} The author would like to thank Guillaume Ricotta and Florent Jouve for suggesting this question, and for the helpful discussions all along this work. We also thank \'Etienne Fouvry and Emmanuel Kowalski for their comments on a preliminary version of the article. The pictures were made with the open-source software \texttt{sagemath}: \cite{sagemath}.

\section{Reduction to statements on equidistribution modulo $1$}
\label{grandesectionreduc}

In this section, we prove that the two cases of Theorem \ref{thprincipal} are implied by two lemmas on the equidistribution modulo $1$ of certain sequences of sets of arithmetic nature. The idea is that the exponential sums we are considering, which are sums of $d$ particular roots of unity, can in fact be expressed as a Laurent polynomial in a smaller number of roots to unity. We start this section by stating a lemma which is the main ingredient to perform this reduction.

\subsection{Reduction modulo prime powers of cyclotomic polynomials}

\label{secreducmod1}

\begin{lem} \label{affirmation}
	Let $d \geqslant 1$ be an integer and let $q = p^\alpha$ be a $d$-admissible integer. Let $x \in \zqzc$ be an element of order $d$. Then we have: $$\overline \phi_d (x) = 0 \text{ in } \zqz$$
	where $\overline \phi_d$ stands for the reduction of $\phi_d$ modulo $q$.
\end{lem}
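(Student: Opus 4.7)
The plan is to exploit the factorisation $X^d - 1 = \prod_{e \mid d} \phi_e(X)$ in $\Z[X]$ and argue that, modulo $p^\alpha$, all the $p$-adic divisibility of $x^d - 1$ must be concentrated in the single factor $\phi_d(x)$, because the other factors $\phi_e(x)$ with $e \mid d$, $e < d$ turn out to be units in $\zqz$.

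First I would verify that the reduction of $x$ modulo $p$ still has order exactly $d$ in $(\Z/p\Z)^\times$. Since $q = p^\alpha$ with $p$ odd and $p \equiv 1 \pmod d$, the group $\zqzc$ is cyclic of order $p^{\alpha-1}(p-1)$, and the unique subgroup of order $d$ is $\{y \in \zqzc : y^d = 1\}$. The natural reduction map $\pi \colon \zqzc \to (\Z/p\Z)^\times$ is surjective with kernel of order $p^{\alpha-1}$. Since $\gcd(d, p^{\alpha-1}) = 1$, the restriction of $\pi$ to the subgroup of order $d$ is injective, so $\pi(x)$ has order exactly $d$ in $(\Z/p\Z)^\times$.

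Next, from $x^d \equiv 1 \pmod{p^\alpha}$ and the factorisation above I deduce that
$$\prod_{e \mid d} \phi_e(x) \equiv 0 \pmod{p^\alpha}.$$
For each proper divisor $e$ of $d$, I would show that $\phi_e(x)$ is coprime to $p$. Because $d \mid p-1$ and $\gcd(d, p) = 1$, the polynomial $X^d - 1$ splits into distinct linear factors over $\F_p$, and the roots of $\phi_e$ in $\F_p$ are exactly the elements of order $e$ in $(\Z/p\Z)^\times$. Since $\pi(x)$ has order $d \neq e$, we get $\phi_e(\pi(x)) \neq 0$ in $\F_p$, hence $\phi_e(x)$ is a unit in $\zqz$.

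The vanishing product combined with the fact that all factors other than $\phi_d(x)$ are units forces $\phi_d(x) \equiv 0 \pmod{p^\alpha}$, which is precisely the claim. The only real subtlety is the first step: without checking that reduction modulo $p$ preserves the order of $x$, one could not rule out $\phi_e(x)$ vanishing modulo $p$ for some proper divisor $e$ of $d$, and the unit argument would collapse. Everything else is routine manipulation with cyclotomic polynomials over $\F_p$.
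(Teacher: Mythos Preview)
Your proof is correct and takes a genuinely different route from the paper's. The paper lifts $x$ via Hensel's lemma to an exact $d$-th root of unity $\tilde z \in \Z_p$, uses that $\Z_p$ is an integral domain to conclude that some $\phi_m(\tilde z)=0$, rules out $m<d$ by the order hypothesis, and then reduces back modulo $p^\alpha$. You instead stay in $\Z/p^\alpha\Z$ and $\F_p$: you show that reduction modulo $p$ preserves the order of $x$ (using $\gcd(d,p^{\alpha-1})=1$), so $\phi_e(x)$ is a unit modulo $p$ for every proper divisor $e$ of $d$, and hence a unit modulo $p^\alpha$; the factorisation $\prod_{e\mid d}\phi_e(x)\equiv 0$ then forces $\phi_d(x)\equiv 0$. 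Your argument is more elementary in that it avoids $p$-adic analysis and Hensel's lemma entirely, relying only on the standard description of roots of cyclotomic polynomials over $\F_p$; the paper's approach, on the other hand, makes the ``exactly one factor vanishes'' phenomenon transparent via the integral-domain structure of $\Z_p$. Both hinge on the same underlying fact---that $x$ cannot satisfy any lower-order cyclotomic relation---but detect it by different means.
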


\begin{proof}
	We consider the polynomial $P(X) := X^{d} -1$, seen as an element in $\Z_p[X]$, where $\Z_p$ is the ring of $p$-adic integers. Let $\tilde x$ be a lift in $\Z$ of the class $x$ modulo $q$. Then we have
	$$P(\tilde x) \equiv 0 \mod q$$
	since $x$ has order $d$. Therefore $ |P(\tilde x) |_p \leqslant \frac{1}{p^\alpha}$, where we denoted by $|\cdot |_p$ the standard $p$-adic absolute value on the field of $p$-adic numbers $\Q_p$.
	On the other hand, we have $P'(\tilde x) = d\tilde x^{d-1}$, which has $p$-adic valuation zero since $(d,p) = 1$ (because $d$ divides $p-1$) and $(\tilde x,p)= 1$ since $x$ is invertible modulo $p^\alpha$. Thus, $|P'(\tilde x) |_p = 1$ and so:		
	$$  |P(\tilde x) |_p \leqslant \frac{1}{p^\alpha} =\frac{1}{p^\alpha} |P'(\tilde x) |^2_p$$ 
	Therefore, by Hensel's lemma (see \cite[chapter II, appendix C]{CetF}) there exists a unique $ \tilde z\in \Z_p$ such that 
	\begin{equation} \label{hensel} 
	\begin{cases}
	P(\tilde z) = 0 \\
	|\tilde z -\tilde x|_p \leqslant \frac{1}{p^\alpha}
	\end{cases}
	\end{equation}
We deduce that:
	\begin{equation} \label{prodphi_d}
	0 = \tilde  z^d - 1 = \prod_{m \mid d} \phi_m(\tilde z) \quad \text{in } \Z_p
	\end{equation}
	Now since $\Z_p$ is an integral domain, at least one of the factors $\phi_m( \tilde z)$ must be zero.\\
	Assume for a contradiction that this happens for an $m$ which is not equal to $d$. Then this would imply that $\tilde z^m = 1 $ in $\Z_p$, hence:
	
	$$ |\tilde x^m - 1|_p =|\tilde x^m - \tilde z^m |_p \leqslant |\tilde x - \tilde z|_p \leqslant \frac{1}{p^\alpha}$$ by the second condition in \eqref{hensel}. Thus, $\tilde x^m \equiv 1 \mod p^\alpha$ for an $m < d$, contradicting the fact that $x$ has order exactly $d$ in $ \zpalph$. Therefore, in the product \eqref{prodphi_d}, it is the term $\phi_d(\tilde z)$ which equals zero. Now, since $|\tilde x - \tilde z|_p \leqslant \frac{1}{p^\alpha}$ we have: $$|\phi_d(\tilde x) |_p =|\phi_d( \tilde x) - \phi_d(\tilde z) |_p \leqslant \frac{1}{p^\alpha}$$ and this is equivalent to $\phi_d(\tilde x) \equiv 0 \mod p^\alpha$, that is: $\overline \phi_d(x) = 0$ in $\Z/p^\alpha \Z$.
	
\end{proof}
In the remainder of this section, we state two lemmas on the equidistribution modulo $1$ of some particular sets, and prove that they imply Theorem \ref{thprincipal}.

\subsection{Reduction step for the main result: case (a)}
\label{secreduction}
First, let us state the proposition which will turn out to imply case (a) of Theorem \ref{thprincipal}.

\begin{prop}[Equidistribution modulo $1$ case (a)] \label{equimyersoncase1} Let $d \geqslant 1$ be an integer and $\mm =(m_1, \dots, m_n) \in \Z^n$. For all $i \in \{1, \dots, n\}$, the notation $d_i$ stands for $\frac{d}{(d,m_i)}$. Let $\delta > 0$. For all $d$-admissible integer $q$, let $w_q$ be an element of order $d$ in $\zqzc$. For each such $q$, we also choose subgroups of $\zqzc$: $H^{(1)}_q , \dots , H^{(n)}_q$. Then, provided the subgroups satisfy the growth conditions:
	$$\forall i \in \{1, \dots,n\}, \qquad |H_{q}^{(i)}| \geqslant q^\delta, $$
	the sets of $(\varphi(d_1) + \dots + \varphi(d_n))$-tuples
	
	$$\bigg \{  \overbrace{\left( \left(\frac{a_1 ( w_q^{m_1})^j}{q}\right)_{0 \leqslant j <  \varphi(d_1)} , \dots, \left(\frac{a_n (w_q^{m_n})^j}{q}\right)_{0 \leqslant j < \varphi(d_n)} \right)}^{=: \xx(a_1, \dots, a_n,q)}; \quad (a_1, \dots, a_n) \in H^{(1)}_q \times \dots \times H^{(n)}_q \bigg \}
$$	

	become equidistributed modulo $1$ as $q$ goes to infinity among the $d$-admissible integers. In other words, for any continuous map $G  \colon [0,1]^d \to \C$, 
	$$ \frac{1}{\prod_{i = 1}^n |H_q^{(i)}|} \sum_{\substack{a_1 \in H^{(1)}_q \\ \svdots \\ a_n \in H_q^{(n)}}} G(\{ \xx(a_1, \dots, a_n,q)\}) \tend{\substack{q \to \infty \\  q \in \mathcal A_d}} \int_{[0,1]^d} G(x_1, \dots, x_d) \dd x_1 \dots \dd x_d.$$ 
\end{prop}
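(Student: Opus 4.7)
The strategy is to apply Weyl's equidistribution criterion, which reduces the statement to a convergence of Fourier coefficients of the empirical measures. Setting $D := \varphi(d_1) + \cdots + \varphi(d_n)$ for the ambient dimension, it suffices to prove that for every nonzero integer vector $\mathbf{h} = (h_{i,j})_{1 \leq i \leq n,\, 0 \leq j < \varphi(d_i)}$, the sum
\[
S_q(\mathbf{h}) := \frac{1}{\prod_{i=1}^{n} |H_q^{(i)}|} \sum_{a_1 \in H_q^{(1)}} \!\!\cdots \!\!\sum_{a_n \in H_q^{(n)}} e\!\left( \sum_{i=1}^{n} \sum_{j=0}^{\varphi(d_i)-1} h_{i,j}\, \frac{a_i\, w_q^{j m_i}}{q} \right)
\]
tends to $0$ as $q \to \infty$ inside $\mathcal{A}_d$.

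The key observation is that the exponential factorizes over the index $i$, since each $a_i$ appears only in one block of terms. Grouping accordingly yields
\[
S_q(\mathbf{h}) \;=\; \prod_{i=1}^{n} \frac{1}{|H_q^{(i)}|} \sum_{a_i \in H_q^{(i)}} e\!\left( \frac{a_i\, f_i(w_q^{m_i})}{q}\right), \qquad f_i(X) := \sum_{j=0}^{\varphi(d_i)-1} h_{i,j} X^j \in \Z[X].
\]
Since $\mathbf{h} \neq 0$, at least one polynomial $f_{i_0}$ is nonzero, and by construction $\deg f_{i_0} < \varphi(d_{i_0})$.

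I would then invoke Proposition \ref{myersongen} on the $i_0$-th factor. The element $w_q^{m_{i_0}}$ has order exactly $d_{i_0} = d/(d, m_{i_0})$ in $\zqzc$, and since $d_{i_0} \mid d$ the integer $q$ is automatically $d_{i_0}$-admissible, so the proposition applies with $d$ replaced by $d_{i_0}$. The growth hypothesis $|H_q^{(i_0)}| \geq q^\delta$ then produces some $\varepsilon = \varepsilon(\delta) > 0$ such that the $i_0$-th factor is $\ll_{f_{i_0},\delta} q^{-\varepsilon}$. Bounding the remaining $n-1$ factors trivially by $1$ (the average of complex numbers of modulus $1$), one obtains $|S_q(\mathbf{h})| \ll_{\mathbf{h}, \delta} q^{-\varepsilon}$, which tends to $0$ and closes Weyl's criterion.

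Conceptually there is no real obstacle in this reduction once Proposition \ref{myersongen} is granted: it is a one-step factorization followed by the application of a black box. The genuine difficulty — controlling exponential sums over multiplicative subgroups via Bourgain's theorem and handling the $p$-adic valuation of $f(w_q)$ — is concentrated in the proof of Proposition \ref{myersongen} itself. The only minor technical points to be checked in the present reduction are that $d_{i_0}$-admissibility is inherited from $d$-admissibility (immediate from $d_{i_0} \mid d$ and $p \equiv 1 \pmod d$), and that the degree bound $\deg f_{i_0} < \varphi(d_{i_0})$ matches exactly the hypothesis of Proposition \ref{myersongen}, which explains why the coordinates of $\mathbf{x}(a_1,\dots,a_n,q)$ had to be cut off at $j = \varphi(d_i)-1$.
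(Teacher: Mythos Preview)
Your proposal is correct and follows essentially the same argument as the paper: Weyl's criterion, factorization of the Weyl sum over the index $i$, identification of a nonzero polynomial $f_{i_0}$ of degree $<\varphi(d_{i_0})$, application of Proposition~\ref{myersongen} to that factor (using that $w_q^{m_{i_0}}$ has order $d_{i_0}$), and trivial bounding of the remaining factors by $1$. Your explicit check that $q$ is $d_{i_0}$-admissible whenever it is $d$-admissible is a point the paper leaves implicit, so your write-up is slightly more careful there.
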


\begin{proof}
See section	\ref{secpreuveequimyersoncase1}.
\end{proof}

\begin{rem} \label{abusdenot}
	There is a little abuse of notation here, since the $a_i$ and $w_q$ are classes modulo $q$, and the fractions above may depend on the choice of a representative. However, since we are only interested in the distribution properties \textit{modulo} $1$, we sometimes allow ourselves to keep on writing classes modulo $q$ at the numerator of fractions with denominator equal to $q$.
\end{rem}

This lemma on equidistribution modulo $1$ translates into a result on equidistribution of exponential sums restricted to a subgroup via the Laurent polynomials $f_{d, \mm}$ introduced in equation \eqref{eqfdm} of Definition \ref{deffd}.

\begin{prop} \label{reducmod1case1}
	Proposition $\ref{equimyersoncase1}$ implies case $\mathrm{(a)}$ of Theorem $\mathrm{\ref{thprincipal}}$.
\end{prop}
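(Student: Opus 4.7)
The strategy is to express each inner exponential sum in the family \eqref{formeplusgen} as the value of the Laurent polynomial $f_{d,\mm}$ at the point $e\bigl(\xx(a_1,\dots,a_n,q)\bigr)$ (with $e$ applied componentwise), and then feed this into Proposition \ref{equimyersoncase1} composed with a continuous test function.

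First, I would fix for each $q \in \mathcal{A}_d$ an element $w_q \in \zqzc$ of order $d$ (which exists because $q$ is $d$-admissible and $\zqzc$ then has a unique subgroup of order $d$), and parametrize the sum by writing the unique subgroup of order $d$ as $\{w_q^k : 0 \leqslant k \leqslant d-1\}$. Splitting the exponential by linearity yields
\[
\sum_{\substack{x \in \zqzc \\ x^d = 1}} e\!\left(\frac{a_1 x^{m_1} + \dots + a_n x^{m_n}}{q}\right)
= \sum_{k=0}^{d-1} \prod_{i=1}^{n} e\!\left(\frac{a_i\, (w_q^{m_i})^k}{q}\right).
\]
For each $i$, the element $w_q^{m_i}$ has order exactly $d_i = d/(d,m_i)$ in $\zqzc$, so by Lemma \ref{affirmation} applied to $w_q^{m_i}$ we have $\overline{\phi_{d_i}}(w_q^{m_i}) = 0$ in $\zqz$. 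Combined with the integer identity $X^k \equiv \sum_{j=0}^{\varphi(d_i)-1} c_{j,k}^{(i)} X^j \mod \phi_{d_i}$ from Definition \ref{deffd}, this gives the congruence
\[
(w_q^{m_i})^k \equiv \sum_{j=0}^{\varphi(d_i)-1} c_{j,k}^{(i)}\, (w_q^{m_i})^j \mod q.
\]
Multiplying by $a_i/q$ and exponentiating (which is legitimate since $e$ is $1$-periodic, so the abuse of notation of Remark \ref{abusdenot} causes no trouble), I would deduce
\[
e\!\left(\frac{a_i (w_q^{m_i})^k}{q}\right) = \prod_{j=0}^{\varphi(d_i)-1} e\!\left(\frac{a_i (w_q^{m_i})^j}{q}\right)^{c_{j,k}^{(i)}},
\]
and substituting back shows that the inner exponential sum equals exactly $f_{d,\mm}\bigl(e(\xx(a_1,\dots,a_n,q))\bigr)$, with $e$ applied componentwise to the vector $\xx(a_1,\dots,a_n,q)$ of Proposition \ref{equimyersoncase1}.

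Once this reformulation is in place, the conclusion follows by a clean change-of-variables argument. Given a continuous $F \colon \mathcal I_{d,\mm} \to \C$, set $d' := \varphi(d_1)+\dots+\varphi(d_n)$ and define $G \colon [0,1]^{d'} \to \C$ by
\[
G(y_1,\dots,y_{d'}) := F\bigl(f_{d,\mm}(e(y_1),\dots,e(y_{d'}))\bigr),
\]
which is continuous and $1$-periodic in each variable (so well-defined on $[0,1]^{d'}$). By construction,
\[
F\!\left( \sum_{\substack{x \in \zqzc \\ x^d=1}} e\!\left(\frac{a_1 x^{m_1}+\dots+a_n x^{m_n}}{q}\right)\right) = G\bigl(\{\xx(a_1,\dots,a_n,q)\}\bigr),
\]
so averaging over $(a_1,\dots,a_n) \in H_q^{(1)} \times \dots \times H_q^{(n)}$ and applying Proposition \ref{equimyersoncase1} (whose growth hypothesis \eqref{growth1} matches that of case (a)) gives convergence to $\int_{[0,1]^{d'}} G \,\dd \lambda$. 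The definition of $\mu = (f_{d,\mm})_*\lambda$ together with the change of variables $z_{i,j} = e(y_{i,j})$ identifies this integral with $\int_{\mathcal I_{d,\mm}} F \,\dd\mu$, which is the desired limit.

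The only step that requires some care is verifying that the congruence for $(w_q^{m_i})^k \mod q$ really produces the same coefficients $c_{j,k}^{(i)}$ used to define $f_{d,\mm}$. This is where Lemma \ref{affirmation} (applied to each $w_q^{m_i}$ viewed as an element of order $d_i$) is essential, because without the vanishing of $\phi_{d_i}(w_q^{m_i}) \pmod q$ one could not pass from the identity in $\Z[X]/\phi_{d_i}$ to a congruence modulo $q$ at $x = w_q^{m_i}$. Everything else is a direct bookkeeping exercise.
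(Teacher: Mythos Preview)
Your proof is correct and follows essentially the same route as the paper: parametrize the subgroup by powers of $w_q$, invoke Lemma~\ref{affirmation} at each $w_q^{m_i}$ (which has order $d_i$) to rewrite $(w_q^{m_i})^k$ in terms of the first $\varphi(d_i)$ powers via the coefficients $c_{j,k}^{(i)}$, recognize the resulting expression as $f_{d,\mm}$ evaluated at $e(\xx(a_1,\dots,a_n,q))$, and then push the equidistribution modulo $1$ of Proposition~\ref{equimyersoncase1} through $F\circ f_{d,\mm}$. Your handling of the change of variables to identify the limit with $\int_{\mathcal I_{d,\mm}} F\,\dd\mu$ is exactly what the paper does (implicitly) as well.
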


\begin{proof}
	Let $d\geqslant 1$ and $\mm = (m_1, \dots, m_n) \in \Z^n$. Let us denote by $\mathcal I_{d, \mm} := f_{d, \mm}\left(\T^{\varphi(d_1) +\dots + \varphi(d_n)} \right)$ and by $\mu := (f_{d, \mm})_*\lambda$ the pushforward measure via $f_{d, \mm}$ of the probability Haar measure $\lambda$ on $\T^{\varphi(d_1) +\dots + \varphi(d_n)}$. \\
	For all $d$-admissible integer $q$, let $$Y_{\mm,q} :=  H^{(1)}_{q} \times \dots \times H^{(n)}_{q}$$
	be a product of subgroups of $\zqzc$ as in case (a) of Theorem \ref{thprincipal} (that is: satisfying the growth condition \eqref{growth1}) and let $\theta_{\mm, q} \colon  Y_{\mm,q} \to \C $ be the map defined by 
\begin{equation} \label{defthetamq}
(a_1, \dots, a_n)  \mapsto  \sum_{\substack{x \in (\zqz)^\times \\ x^d = 1}} e\left(\frac{a_1 x^{m_1} + \dots + a_n x^{m_n}}{q}\right).
\end{equation}	
We want to show that, assuming Proposition \ref{equimyersoncase1}, we have
\begin{equation} \label{convintegrales}
	\frac{1}{|Y_{\mm,q}|}  \sum_{(a_1, \dots ,a_n) \in Y_{\mm,q}}  F\left(\theta_{\mm,q} \left( a_1, \dots, a_n\right) \right) \tend{\substack{q \to \infty \\ q \in \mathcal A_d}} \int_{\mathcal I_{d, \mm}}^{} F \dd \mu
\end{equation}
for any $F\colon \mathcal I_{d, \mm} \to \C$ continuous.\\

Let $w_q$ be an element of order $d$ in $(\zqz)^\times$. Then the unique subgroup of order $d$ inside $\zqzc$ can be described as $\left\{ w_q^k; \ k \in \{0, \dots, d-1\}\right\}$, so that for all $(a_1, \dots, a_n) \in Y_{ \mm,q},$
$$\theta_{\mm,q}\left( a_1, \dots, a_n\right) =\sum ^{d-1}_{k=0} \prod_{i=1}^{n} e\left(\frac{a_i (w_q^{m_i})^k}{q}\right).$$
	
Now,	
	$$ \forall i \in \{1, \dots, n\},\ \forall k \in \{0,\dots,d-1\}, \qquad (w_q^{m_i})^k = \sum_{j= 0}^{\varphi(d_i) - 1} c_{j,k}^{(i)} (w_q^{m_i})^j.$$
This comes from using lemma \ref{affirmation} after having evaluated the polynomial congruences defining the $c_{j,k}^{(i)}$ (see Definition \ref{deffd}) at $w_q^{m_i}$. The lemma applies since $w_q^{m_i}$ has order $d_i$ in $\zqzc$. Replacing this in the expression of $\theta_{\mm,q}\left( a_1, \dots, a_n\right)$ obtained above, we get:
	
$$\theta_{\mm,q}\left( a_1, \dots, a_n\right)
		= \sum ^{d-1}_{k=0} \prod_{i=1}^{n} \prod_{j=0}^{\varphi(d_i)-1} e\left(\frac{a_i (w_q^{m_i})^j}{q}\right)^{c_{j,k}^{(i)}}.$$
	
		Therefore, if we define for all $i \in \{1, \dots, n\}$ and for all $j \in \{ 0, \dots, \varphi(d_i)-1 \}$, 
	
	\begin{equation*} \label{introzij}
		z_{i,j} = z_{i,j}(a_1, \dots, a_n, q) := e\left(\frac{a_i (w_q^{m_i})^j}{q}\right)  
	\end{equation*}
	
	we have \begin{equation} \label{fdzij}
		\theta_{\mm,q} \left( a_1, \dots, a_n\right) = f_{d, \mm} \left((z_{1,j})_{0 \leqslant j < \varphi(d_1)}, \dots, (z_{n,j})_{0 \leqslant j< \varphi(d_n)}\right)
	\end{equation}

	with the Laurent polynomial $f_{d, \mm}$ from Definition \ref{deffd}, and the $z_{i,j}$ being elements of $\T$.  This already shows that $\theta_{\mm,q} \left( a_1, \dots, a_n\right) $ belongs to the image of $f_{d, \mm}$. \\

 Now, let us prove that the equidistribution statement. Let $F\colon \mathcal I_{d, \mm} \to \C$ be a continuous function. Thanks to \eqref{fdzij}, the left-hand side of \eqref{convintegrales} may be rewritten as

 $$\frac{1}{|Y_{\mm,q}|}  \sum_{(a_1, \dots, a_n) \in Y_{\mm,q}}  F\left( f_{d, \mm}\left((z_{1,j})_{0 \leqslant j < \varphi(d_1)}, \dots, (z_{n,j})_{0 \leqslant j < \varphi(d_n)}\right) \right)$$

and this converges to 

$$ \int_{\T^{\varphi(d_1) +\dots + \varphi(d_n)}}^{}\left( F \circ f_{d, \mm} \right) \dd \lambda =  \int_{\mathcal I_{d, \mm}}^{} F \dd \mu$$
by proposition \ref{equimyersoncase1} and because $\mu = (f_{d, \mm})_*\lambda$. This finishes the proof of \eqref{convintegrales}. Thus, Theorem \ref{thprincipal} (a) indeed follows from Proposition \ref{equimyersoncase1}.
\end{proof}

\subsection{Reduction step for the main result: case (b)}
\label{secreduccase2}

As in the previous case, let us begin with the statement of the proposition which will imply case (b) of Theorem \ref{thprincipal}.

\begin{prop}[Equidistribution modulo $1$ case (b)] \label{equimyersoncase2} Let $d \geqslant 1$ be an integer and $\mm =(m_1, \dots, m_n) \in \Z^n$ be a vector \textbf{coprime with} $d$. Let $\delta > 0$. For all $d$-admissible integer $q$, let $w_q$ be an element of order $d$ in $\zqzc$. For each $q$, we also choose subgroups of $\zqzc$: $H^{(1)}_q , \dots , H^{(n)}_q$. Then, provided for all $q$, there exists $i \in \{1, \dots, n\}$ such that
	$$|H_{q}^{(i)}| \geqslant q^\delta, $$
	the sets of $\varphi(d)$-tuples
	\begin{multline*} \bigg\{  \left(\frac{a_1 (w_q^{m_1})^0 + \dots + a_n (w_q^{m_n})^0}{q}, \dots, \frac{a_1 (w_q^{m_1})^{\varphi(d)-1} + \dots + a_n (w_q^{m_n})^{\varphi(d)-1}}{q} \right); \\  (a_1, \dots, a_n) \in H^{(1)}_q \times \dots \times H^{(n)}_q \bigg \}
	\end{multline*}
	become equidistributed modulo $1$ as $q$ goes to infinity among the $d$-admissible integers.
\end{prop}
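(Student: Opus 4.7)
The plan is to deduce the equidistribution modulo $1$ from the multi-dimensional Weyl criterion, reducing it via factorization to the one-variable estimate provided by Proposition \ref{myersongen}. This mimics the strategy used to prove Myerson's lemma (Lemma \ref{myerson}) in the case where $a$ ranges over all of $\zqz$, but now the savings in the one-variable character sums need to come from Bourgain's deep estimate on multiplicative subgroups rather than from a vanishing identity.

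First I would fix a non-zero $\mathbf{h} = (h_0, \ldots, h_{\varphi(d)-1}) \in \Z^{\varphi(d)}$ and examine
\begin{equation*}
S_q(\mathbf{h}) \;:=\; \frac{1}{\prod_{i=1}^{n}|H_q^{(i)}|} \sum_{a_1 \in H_q^{(1)}} \!\!\cdots\!\! \sum_{a_n \in H_q^{(n)}} e\!\left( \sum_{j=0}^{\varphi(d)-1} h_j \,\frac{\sum_{i=1}^n a_i (w_q^{m_i})^j}{q} \right),
\end{equation*}
since, by Weyl's criterion, equidistribution modulo $1$ of the $\varphi(d)$-tuples in the statement is equivalent to $S_q(\mathbf{h}) \to 0$ as $q \to \infty$ in $\mathcal A_d$, for every such $\mathbf{h}$. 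Swapping the summations over $i$ and $j$ in the exponent and setting $f(X) := \sum_{j=0}^{\varphi(d)-1} h_j X^j \in \Z[X] \setminus \{0\}$ (a polynomial of degree strictly less than $\varphi(d)$, since $\mathbf{h} \neq 0$), the sum separates as a product of $n$ one-variable sums:
\begin{equation*}
S_q(\mathbf{h}) \;=\; \prod_{i=1}^{n} \frac{1}{|H_q^{(i)}|} \sum_{a_i \in H_q^{(i)}} e\!\left( \frac{a_i\, f(w_q^{m_i})}{q} \right).
\end{equation*}

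Here the coprimality hypothesis on $\mm$ becomes essential: for every $i$, the condition $(m_i, d) = 1$ ensures that $w_q^{m_i}$ still has order exactly $d$ in $\zqzc$, hence is a valid input for Proposition \ref{myersongen}. Next, for each $q$, I would pick an index $i^\star = i^\star(q)$ with $|H_q^{(i^\star)}| \geqslant q^\delta$, which exists by the growth hypothesis. Applying Proposition \ref{myersongen} to $f$, $H_q^{(i^\star)}$ and $w_q^{m_{i^\star}}$ yields some $\varepsilon = \varepsilon(\delta) > 0$ such that the factor of index $i^\star$ is $O_{f,\delta}(q^{-\varepsilon})$. Bounding the remaining $n-1$ factors trivially by $1$, we conclude $|S_q(\mathbf{h})| \ll_{f,\delta} q^{-\varepsilon} \to 0$, which completes the application of Weyl's criterion.

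The main obstacle is thus not in this reduction, which is formally routine, but in securing Proposition \ref{myersongen}: one must exploit Bourgain's estimate on exponential sums over multiplicative subgroups, which in turn demands a precise understanding of the $p$-adic valuation of $f(w_q)$ (as will be handled via Proposition \ref{lembezout}). Once that analytic input is granted, the proof of Proposition \ref{equimyersoncase2} collapses to the elementary factorization above, and the fact that \emph{one} subgroup needs to be large suffices precisely because the other factors may be bounded trivially without spoiling the $q^{-\varepsilon}$ saving.
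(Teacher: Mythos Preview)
Your proposal is correct and follows essentially the same approach as the paper's own proof: apply Weyl's criterion, factor the resulting Weyl sum as $\prod_{i=1}^n \frac{1}{|H_q^{(i)}|}\sum_{a_i\in H_q^{(i)}} e\!\left(\frac{a_i f(w_q^{m_i})}{q}\right)$ with $f(X)=\sum_j h_j X^j$, use the coprimality of $\mm$ with $d$ to ensure each $w_q^{m_i}$ has order $d$, apply Proposition~\ref{myersongen} to one factor satisfying $|H_q^{(i)}|\geqslant q^\delta$, and bound the rest trivially by~$1$. Your commentary on why only one large subgroup is needed, and on where the real difficulty lies (Proposition~\ref{myersongen} via Bourgain and Proposition~\ref{lembezout}), is also accurate.
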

\begin{proof}
	See section \ref{secpreuveequimyersoncase2}.
\end{proof}
Corollary \ref{coromyerson}, which generalizes the classical Myerson's lemma stated in the introduction, is obtained as the special case where $\mm = (1)$.\\

The Laurent polynomials $g_d$ introduced in Definition \ref{defgd} will carry the equidistribution result of Proposition \ref{equimyersoncase2} to the equidistribution theorem for exponential sums we are aiming at. This is the content of the following proposition.

\begin{prop} \label{reducmod1}
	Proposition $\ref{equimyersoncase2}$ implies case $\mathrm{(b)}$ of Theorem $\mathrm{\ref{thprincipal}}$.
\end{prop}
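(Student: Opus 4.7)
The plan is to mimic the proof of Proposition \ref{reducmod1case1}, taking advantage of the key simplification that when $\mm$ is coprime with $d$, every $w_q^{m_i}$ is itself a primitive $d$-th root of unity modulo $q$. This means that all the $d_i$ collapse to $d$, and the same coefficients $c_{j,k}$ of Definition \ref{defgd} govern the reduction of each $(w_q^{m_i})^k$ modulo $\phi_d$.

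First, for each $q \in \mathcal A_d$, I would fix $w_q$ of order $d$ in $\zqzc$. Since $(m_i,d)=1$, the element $w_q^{m_i}$ also has order $d$, so Lemma \ref{affirmation} applies and yields the congruence $(w_q^{m_i})^k \equiv \sum_{j=0}^{\varphi(d)-1} c_{j,k} (w_q^{m_i})^j \mod q$ with the \emph{same} integers $c_{j,k}$ for all $i$. Inserting this identity into the expression of $\theta_{\mm, q}(a_1,\dots,a_n)$ from \eqref{defthetamq}, and interchanging the summations over $k$ and $j$, I obtain
\begin{equation*}
\theta_{\mm, q}(a_1, \dots, a_n) = \sum_{k=0}^{d-1} e\left(\frac{1}{q}\sum_{j=0}^{\varphi(d)-1} c_{j,k}\, S_j(a_1,\dots,a_n,q)\right),
\end{equation*}
where $S_j(a_1, \dots, a_n, q) := a_1 (w_q^{m_1})^j + \dots + a_n (w_q^{m_n})^j \in \Z/q\Z$ (with the abuse of notation of Remark \ref{abusdenot}). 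Introducing the unit-modulus complex numbers $\zeta_{j+1} := e(S_j/q) \in \T$ for $0 \leqslant j < \varphi(d)$, this rewrites as $\theta_{\mm, q}(a_1, \dots, a_n) = g_d(\zeta_1, \dots, \zeta_{\varphi(d)})$. In particular, each value $\theta_{\mm, q}(a_1, \dots, a_n)$ lies in the image of $g_d$.

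To conclude, let $F \colon g_d(\T^{\varphi(d)}) \to \C$ be continuous, and set $\mu := (g_d)_* \lambda$ where $\lambda$ is the Haar probability measure on $\T^{\varphi(d)}$. The map $F \circ g_d$ is continuous on $\T^{\varphi(d)}$, hence descends to a continuous $1$-periodic function $G$ on $[0,1]^{\varphi(d)}$ via $G(x_1,\dots,x_{\varphi(d)}) := (F\circ g_d)(e(x_1),\dots,e(x_{\varphi(d)}))$. Under the growth condition \eqref{growth2}, Proposition \ref{equimyersoncase2} asserts that the tuples $(S_0/q, \dots, S_{\varphi(d)-1}/q)$ become equidistributed modulo $1$ as $q \to \infty$ in $\mathcal A_d$. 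Applying this to $G$, the average
\begin{equation*}
\frac{1}{\prod_{i=1}^{n}|H_q^{(i)}|} \sum_{(a_1,\dots,a_n) \in H_q^{(1)}\times\cdots\times H_q^{(n)}} F\bigl(\theta_{\mm, q}(a_1,\dots,a_n)\bigr)
\end{equation*}
converges to $\int_{[0,1]^{\varphi(d)}} G\, dx_1 \cdots dx_{\varphi(d)} = \int_{\T^{\varphi(d)}} (F \circ g_d)\, d\lambda = \int F\, d\mu$, which is exactly the conclusion of case (b) of Theorem \ref{thprincipal}.

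The main (and essentially only) obstacle is bookkeeping: one must verify that the polynomials $g_d$ of Definition \ref{defgd} and $f_{d,\mm}$ of Definition \ref{deffd} interact correctly through the coprimality hypothesis, specifically that aggregating the $n$ independent terms $a_i(w_q^{m_i})^j$ into a single sum $S_j$ is exactly what enables the application of Proposition \ref{equimyersoncase2} with its weaker growth assumption \eqref{growth2} on the $H_q^{(i)}$, in contrast with the stronger condition \eqref{growth1} needed in case (a). All the arithmetic content is already packaged into Proposition \ref{equimyersoncase2}.
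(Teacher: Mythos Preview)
Your proof is correct and follows essentially the same route as the paper: you use that $(m_i,d)=1$ forces every $d_i=d$, so the same coefficients $c_{j,k}$ govern all reductions, allowing you to aggregate the numerators into $S_j = \sum_i a_i(w_q^{m_i})^j$ and write $\theta_{\mm,q} = g_d(e(S_0/q),\dots,e(S_{\varphi(d)-1}/q))$; then Proposition~\ref{equimyersoncase2} finishes the argument. The only cosmetic difference is that the paper reaches this identity by specializing the formula already obtained in the proof of Proposition~\ref{reducmod1case1}, whereas you redo the reduction directly and spell out the passage from equidistribution modulo~$1$ to the pushforward measure a bit more explicitly.
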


\begin{proof}
We still denote by $Y_{\mm,q} := H_q^{(1)} \times \dots \times H_q^{(n)}$ and by $\theta_{\mm, q}$ the map defined in \eqref{defthetamq}. We assume that the subgroups $H_q^{(i)}$ satisfy the growth condition \eqref{growth2} instead of \eqref{growth1}. Thanks to the proof of Proposition \ref{reducmod1case1} we have that

	$$\theta_{\mm,q}\left( a_1, \dots, a_n\right) = \sum ^{d-1}_{k=0} \prod_{i=1}^{n} \prod_{j=0}^{\varphi(d_i)-1} e\left(\frac{a_i (w_q^{m_i})^j}{q}\right)^{c_{j,k}^{(i)}}
	$$
	for all $(a_1, \dots, a_n) \in Y_{ \mm,q}$, where the integers $c_{j,k}^{(i)}$ are defined as in Definition \ref{deffd}. However, in the case where $\mm$ is coprime with $d$, all the $d_i$ are equal to $d$. This implies that for all $i$, the list of integers $(c_{j,k}^{(i)})_{0 \leqslant j <\varphi(d_i), \ 0 \leqslant k < d}$ is always equal to the list $(c_{j,k})_{0 \leqslant j <\varphi(d), \ 0 \leqslant k < d}$ of Definition \ref{defgd}.
Therefore,
	
	\begin{align*}\theta_{\mm,q}\left( a_1, \dots, a_n\right)= \sum ^{d-1}_{k=0} \prod_{j=0}^{\varphi(d)-1} e\left(\frac{a_1 (w_q^{m_1})^j + \dots + a_n (w_q^{m_n})^j}{q}\right)^{c_{j,k}}.
	\end{align*}
	
So if we define for all $j \in \{ 0, \dots, \varphi(d)-1 \}$, 
	
	\begin{equation} \label{introzj}
		z_j = z_j(a_1, \dots, a_n, q, j) :=  e\left(\frac{a_1 (w_q^{m_1})^j + \dots + a_n (w_q^{m_n})^j}{q}\right),
	\end{equation}
	
	we have \begin{equation} \label{gdzj}
	\theta_{\mm,q} \left( a_1, \dots, a_n\right) = g_d(z_0, \dots, z_{\varphi(d)-1}),
	\end{equation}
	with the Laurent polynomial $g_d$ defined at Definition \ref{defgd} and the $z_j$ being elements of $\T$.  This shows that $\theta_{\mm,q} \left( a_1, \dots, a_n\right) $ belongs to the image of $g_d$. Finally, the argument to prove the equidistribution with respect to the appropriate pushforward measure is the same as in the proof of Proposition \ref{reducmod1case1}, with Proposition \ref{equimyersoncase2} playing the role of Proposition \ref{equimyersoncase1}.
\end{proof}	

In the next section, we prove the key exponential sum estimate which will allow us to prove Proposition \ref{equimyersoncase1} and Proposition \ref{equimyersoncase2} using Weyl's equidistribution criterion.

\section{Improved versions of Myerson's lemma} \label{secmyerson}
In the previous section, we proved that Theorem \ref{thprincipal} (b) reduces to a statement on the equidistribution modulo $1$ of some specific sets of $\varphi(d)$-tuples. For instance when $\mm = (1)$, Proposition \ref{equimyersoncase2} reduces to the study of the distribution modulo $1$ of sets of the form
\begin{equation} \label{formeavecHq}
\left\{\frac{a}{q} \left(1, w_q, \dots , w_q^{\varphi(d)-1}\right); \ a \in H_q\right\},
\end{equation}
where $H_q$ is a sufficiently large subgroup of $\zqzc$ and $w_q$ is an element of order $d$ in $\zqzc$ (see Corollary \ref{coromyerson}). This can be seen as a generalization of \cite[Lemma 6.2]{periods}, which concerns sets of the form

\begin{equation} \label{formeaveczqz}
\left\{\frac{a}{q} \left(1, w_q, \dots , w_q^{\varphi(d)-1}\right); \ a \in \zqz\right\} \cdot
\end{equation}

If one wants to go from the equidistribution modulo $1$ of sets of the form \eqref{formeaveczqz} to that of sets of the form \eqref{formeavecHq}, this can become quite technical when the subgroup $H_q$ is small. More precisely, the equidistribution modulo $1$ of sets of the form \eqref{formeavecHq} can be proved by elementary means as long as the subgroups $H_q$ satisfy the growth condition (see \cite{mathese})
\begin{equation} \label{racinedeqsurhq}
\frac{\sqrt q}{|H_q|} \tend{q \to \infty} 0.
\end{equation}

However, as explained in \cite{kurlberg}, improving upon the range \enquote{$|H_q|$ grows faster than $\sqrt q$} requires more difficult techniques, and the best known bounds come from deep results from additive combinatorics. In this section, we will prove the key exponential sum estimate which allows us to prove the equidistribution modulo $1$ of the sets $\eqref{formeavecHq}$ for very small subgroups $H_q$. Precisely, it will give us the fact that the growth condition \eqref{racinedeqsurhq} can be replaced by
$$|H_q| \geqslant q^{\delta}$$
for any fixed $\delta >0$, which represents a huge improvement. This relies on a theorem due to Bourgain, that we state in the next subsection.

\subsection{Bourgain's theorem}
\label{boundsmult}
In a series of articles, Bourgain, Chang, Glibichuk and Konyagin proved very strong estimates on sums of additive characters modulo $q$ over subgroups of $\zqzc$, for different forms of factorization of $q$. The case where $q$ is prime is proved in \cite{BGK}, while the case of prime powers with bounded exponent is settled in \cite{bourgainchang}. This series of works culminated with the following theorem, which treats the general case, and includes in particular the case of small primes raised to very high powers. Notice that since we are only dealing with moduli which are prime powers, the proof of the result we really use is contained in Part I of \cite{bourgainarbitrary}.

\begin{theobis}[Bourgain] \label{bourgain}
	For any $\delta > 0$, there exists $\varepsilon = \varepsilon (\delta) > 0$ such that for any integer $q \geqslant 2$, and any subgroup $H$ of $\zqzc$ such that $|H| \geqslant q^\delta$, 
	\begin{equation}
	\max_{a\in \zqzc} \left| \sum_{x \in H}^{} e \left( \frac{ax}{q}\right)\right| \leqslant C \frac{|H|}{q^\varepsilon}
	\end{equation}
	where $C$ is a constant depending at most on $\delta$.
\end{theobis}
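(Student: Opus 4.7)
The plan is to exploit the fact that $H$ is a multiplicative subgroup in order to rule out additive structure, following the general philosophy of the sum-product phenomenon. Writing $S(a) := \sum_{x\in H} e(ax/q)$, the standard Parseval-type manipulation shows that if $\max_a |S(a)|$ is comparable to $|H|$, then a positive proportion of the mass of the indicator $\mathbf 1_H$ (viewed as a function on $\Z/q\Z$) is concentrated on a small number of Fourier coefficients, which in turn forces $H+H$ to have size comparable to $|H|$ via Plünnecke--Ruzsa. Thus a bound of the form $|H+H| \gg |H|^{1+\eta}$ translates, through a standard argument, into the desired estimate $|S(a)| \ll |H|/q^\varepsilon$.

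The core of the proof is therefore to establish a sum-product estimate for multiplicative subgroups of $(\Z/q\Z)^\times$. Since $H\cdot H = H$, any sum-product inequality of the shape $\max(|A+A|, |A\cdot A|) \gg |A|^{1+\eta}$ applied to $A=H$ immediately yields additive expansion $|H+H|\gg |H|^{1+\eta}$. In the prime case $q=p$, I would follow the Bourgain--Glibichuk--Konyagin route: prove a sum-product theorem for arbitrary subsets of $\F_p$ of size $\geqslant p^\delta$ via a Balog--Szemerédi--Gowers argument combined with an incidence bound (of Szemerédi--Trotter type, available through the multiplicative energy formalism in $\F_p$), then specialize to $A=H$. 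Iterating the additive expansion a bounded number of times (depending only on $\delta$) and combining with a Katz--Koester type lemma, one obtains the desired exponential gain $q^{-\varepsilon}$ in the character sum.

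For a prime power $q=p^\alpha$, the argument must be adapted to handle the nontrivial filtration $(\Z/p^\alpha\Z)^\times \supset 1 + p\Z/p^\alpha\Z \supset \dots \supset 1+p^{\alpha-1}\Z/p^\alpha\Z$. A multiplicative subgroup $H$ may sit inside one of these congruence layers, in which case reducing modulo $p$ does not yield a large subgroup of $\F_p^\times$. To deal with this, I would stratify $H$ according to the largest $k$ such that $H \subseteq 1+p^k\Z/p^\alpha\Z$ and apply a ``logarithm'' (or the map $1+p^k x \mapsto x \bmod p^{\alpha-k}$) to transfer $H$ to an additive subgroup of $\Z/p^{\alpha-k}\Z$, then proceed inductively on $\alpha$. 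On the ``top layer'' (where reduction modulo $p$ is nontrivial), one applies the prime case. This is the strategy carried out in Part~I of Bourgain's paper on arbitrary moduli.

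The main obstacle is this last step: proving a uniform sum-product estimate across all layers of the $p$-adic filtration, with constants depending only on $\delta$ and not on the exponent $\alpha$ or the prime $p$. In particular, one must quantify carefully how much ``room'' the subgroup $H$ has to grow additively inside the smaller rings $\Z/p^{\alpha-k}\Z$, and control the loss incurred at each level of the induction so that the final $\varepsilon$ depends only on $\delta$. Once this uniform sum-product input is in hand, the exponential sum bound follows from the Fourier-theoretic reduction described above, and this is the step I would allocate the bulk of the technical work to.
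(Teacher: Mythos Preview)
The paper does not prove this theorem at all: its ``proof'' is simply the sentence ``See \cite[Theorem]{bourgainarbitrary}.'' In other words, Bourgain's theorem is imported as a black box, and the paper's own contribution lies in \emph{applying} it (via Proposition~\ref{lembezout} and Proposition~\ref{myersongen}), not in reproving it. So there is no proof in the paper for your attempt to be compared against.

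As for your sketch itself: you have correctly identified the overarching architecture of the Bourgain--Glibichuk--Konyagin approach and its extension to prime powers, but what you have written is a high-level roadmap rather than a proof. A couple of points are imprecise. First, the passage ``if $\max_a |S(a)|$ is comparable to $|H|$ then $\ldots$ forces $|H+H|$ comparable to $|H|$ via Pl\"unnecke--Ruzsa'' is not how the reduction actually works; the genuine mechanism goes through iterated convolutions and a Balog--Szemer\'edi--Gowers step applied to the large spectrum, and one does not directly obtain small doubling of $H$ itself. Second, in the prime-power case the induction on the $p$-adic filtration is considerably more delicate than ``apply a logarithm and proceed inductively''; controlling the dependence on $\alpha$ uniformly is the entire difficulty, and your last paragraph acknowledges this without resolving it. Given that the paper only needs the statement, the appropriate thing to do here is exactly what the paper does: cite Bourgain and move on.
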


\begin{proof}
	See \cite[Theorem]{bourgainarbitrary}.
\end{proof}

\subsection{The crucial control of the $p$-adic valuation}
In order to apply the previous theorem in our specific context, the following proposition plays an important role.

\begin{prop} \label{lembezout}
	Let $d \geqslant 1$ and let $f\in \Z[X] \setminus \{0\}$ be a polynomial of degree strictly less than $\varphi(d)$. For all $d$-admissible integers $q$, we choose an element $w_q$ of order $d$ in $\zqzc$, and an arbitrary lift $\tilde w_q$ in $\Z$. Then there exist two constants $C_f, n_f \geqslant 1$ such that for all $q= p^\alpha \in \mathcal A_d$ such that $q> n_f$,
	\begin{enumerate}[label=$\mathrm{(\alph*)}$]
		\item  $ q$ does not divide $f(\tilde w_q). $
		\item $p^{v_p(f(\tilde w_q))} \leqslant C_f.$
	\end{enumerate}
	
\end{prop}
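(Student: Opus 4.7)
\textbf{Proof plan for Proposition \ref{lembezout}.}

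The plan is to use a Bézout identity in $\Z[X]$ coming from the coprimality of $f$ with the cyclotomic polynomial $\phi_d$. Since $\phi_d$ is irreducible in $\Q[X]$ of degree $\varphi(d)$ and $f\ne 0$ has degree strictly less than $\varphi(d)$, the polynomial $\phi_d$ cannot divide $f$, so $\gcd(f,\phi_d)=1$ in $\Q[X]$. By the Euclidean algorithm in $\Q[X]$ and clearing denominators (equivalently, using that the resultant $N:=\mathrm{Res}(f,\phi_d)$ is a nonzero integer), I obtain polynomials $U, V \in \Z[X]$ and a nonzero integer $N$, depending only on $f$ and $d$, such that
\begin{equation*}
U(X)\,f(X)\;+\;V(X)\,\phi_d(X)\;=\;N.
\end{equation*}

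For part (a), I will evaluate this identity at the integer lift $\tilde w_q$ and reduce modulo $q$. By Lemma \ref{affirmation}, $\phi_d(\tilde w_q)\equiv 0\pmod q$, so the identity becomes
\begin{equation*}
U(\tilde w_q)\,f(\tilde w_q)\equiv N\pmod q.
\end{equation*}
Hence if $q$ divided $f(\tilde w_q)$ it would divide $N$, which is impossible as soon as $q > |N|$. Thus setting $n_f:=|N|$ gives (a).

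For part (b), I will work with the identity directly in $\Z$ and take $p$-adic valuations. Lemma \ref{affirmation} in fact shows $q = p^\alpha$ divides $\phi_d(\tilde w_q)$, so $v_p\bigl(V(\tilde w_q)\phi_d(\tilde w_q)\bigr)\ge \alpha$. Once $q>|N|$ we have $\alpha>v_p(N)$, and the Bézout identity forces $v_p\bigl(U(\tilde w_q)f(\tilde w_q)\bigr)=v_p(N)$ (otherwise both terms on the left-hand side would have valuation $>v_p(N)$, contradicting the right-hand side). In particular
\begin{equation*}
v_p\bigl(f(\tilde w_q)\bigr)\;\le\;v_p(N),
\end{equation*}
so $p^{v_p(f(\tilde w_q))}\le p^{v_p(N)}\le |N|$, and $C_f:=|N|$ does the job.

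There is essentially no hard step here: the only slightly delicate point is checking that the integers $U, V, N$ can indeed be chosen independently of $q$, which follows from the fact that $f$ and $\phi_d$ are fixed polynomials and the Bézout/resultant construction takes place in $\Z[X]$ once and for all. One should also note that the argument is insensitive to the choice of lift $\tilde w_q$, since replacing $\tilde w_q$ by $\tilde w_q + kq$ changes $f(\tilde w_q)$ only modulo $q$, whereas both conclusions (a) and (b) concern divisibilities by powers of $p$ at most $p^\alpha=q$, where the statement does not see the ambiguity (for (b) one uses that $v_p(f(\tilde w_q))<\alpha$ by the argument above, so the value of $v_p(f(\tilde w_q))$ is intrinsic).
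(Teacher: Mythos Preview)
Your proposal is correct and follows essentially the same approach as the paper: a Bézout identity $U f + V \phi_d = N$ in $\Z[X]$, evaluated at $\tilde w_q$ and combined with Lemma~\ref{affirmation}, yields both (a) with $n_f = |N|$ and (b) with $C_f = |N|$. Your treatment of (b) via the ultrametric inequality is a minor rephrasing of the paper's argument (which reduces the congruence $b(w_q)f(w_q)\equiv n$ modulo $p^{v_p(f(\tilde w_q))}$), and your remarks on the resultant and on independence of the lift are accurate extra observations.
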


\begin{proof}	As in the proof of \cite[Lemma 6.2]{periods}, we use the fact that there exist two polynomials $a,b \in \Z[X]$ and an integer $n \geqslant 1$ such that
	\begin{equation}\label{polybezoutgauss} a(X)\phi_d(X) + b(X) f(X)  = n, 
	\end{equation}
since $f$ and $\phi_d$ are coprime in the euclidean domain $\Q[X]$. Now, let $q= p^\alpha$ be a $d$-admissible integer. Reducing equation \eqref{polybezoutgauss} modulo $q$ and evaluating it at $w_q$ leads to
	$$ \overline a ( w_q) \overline \phi_d( w_q) + \overline b ( w_q) \overline f ( w_q) \equiv n \mod p^\alpha $$
hence 
	\begin{equation} \label{bfn}
	\overline b ( w_q) \overline f ( w_q) \equiv n \mod p^\alpha 
	\end{equation}
by Lemma \ref{affirmation}. Now, if $q = p^\alpha > n$, then $n$ is non-zero modulo $q$, hence $\overline f ( w_q) \not \equiv 0  \mod q$. This precisely means that $q$ does not divide $f(\tilde w_q)$. This shows that $n_f:= n$ is a suitable constant for assertion (a). This part of the lemma actually completes the proof of Lemma \ref{myerson}. \\
	
	Another way of phrasing what we just proved is that as soon as $q > n$, the $p$-adic valuation of $f(\tilde w_q)$ is strictly less than $\alpha$. Let us denote by $\gamma <  \alpha$ the $p$-adic valuation of $f(\tilde w_q)$. Then, if we reduce the congruence \eqref{bfn} modulo $p^\gamma$, we get $n \equiv 0 \mod p^\gamma$. Thus,
	$$\gamma = v_p(f(\tilde w_q)) \leqslant v_p(n),$$
	hence
	$$p^{v_p(f(\tilde w_q))} \leqslant p^{ v_p(n)} \leqslant n.$$
	Therefore, we proved that with the choice $C_f := n$, assertion (b) holds.
\end{proof}

Now, let us use this proposition, together with Theorem \ref{bourgain}, to deduce the key exponential sum estimate which will be used in the proofs of Proposition \ref{equimyersoncase1} and Proposition \ref{equimyersoncase2}.

\subsection{Proof of the key exponential sum estimate: Proposition \ref{myersongen}}

	Let $q = p^ \alpha$ be a $d$-admissible integer, and let $H_q$ and $w_q$ be as in the statement. Let $ \tilde w_q$ be any lift in $ \Z$ of the class $w_q$. Assume further that $q > n_f$ for any constant $n_f$ as in Proposition \ref{lembezout}.\\
	One cannot directly apply Bourgain's theorem to the sum on the left-hand side of \eqref{sommefwq} because $f(w_q)$ could be non-invertible modulo $q$. In order to reduce to a situation where Bourgain's theorem applies, let us introduce the notation $ \beta_q$ for the $p$-adic valuation of $f(\tilde w_q)$, and write $f(\tilde w_q) := p^{\beta_q} r_q$. By Proposition \ref{lembezout} (a), $\beta_q <  \alpha$. Then we have
	\begin{equation} \label{formerq}
	\sum_{a \in H_{q}}^{} e \left( \frac{af(w_{q})}{q}\right) = \sum_{a \in H_{q}}^{} e \left( \frac{ar_q}{p^{\alpha -  \beta_q}}\right).
	\end{equation}
	Now, each of the terms $e \left( \frac{ar_q}{p^{\alpha -  \beta_q}}\right)$ only depends on the class of $a$ modulo $p^{\alpha -  \beta_q}$. Let us denote by $q' := p^{\alpha -  \beta_q}$ and by $\pi$ the group homomorphism $(\Z/ q   \Z)^\times \to (\Z / q' \Z)^\times$ induced by the reduction modulo $q'$. The latter induces a group homomorphism $\tilde \pi \colon H_q \to \pi(H_q) =: H'_q$. We denote by $k := |\ker \tilde \pi |$. Then we have the following equality:
	\begin{equation*} 
	\sum_{a \in H_{q}}^{} e \left( \frac{ar_q}{p^{\alpha -  \beta_q}}\right) = k\sum_{a \in H'_{q}}^{} e \left( \frac{ar_q}{q'}\right).
	\end{equation*}
	Indeed, any element of $H'_q$ has exactly $k$ preimages in $H_q$ under the reduction modulo $q'$.\\
	Since $\ker \tilde \pi \subseteq \ker \pi$, we have that $k \leqslant | \ker \pi | = p^{\beta_q}$. Therefore,
	\begin{equation} \label{pbetaq}
	\left|\sum_{a \in H_{q}}^{} e \left( \frac{ar_q}{p^{\alpha -  \beta_q}}\right)\right| \leqslant p^{\beta_q} \left| \sum_{a \in H'_{q}}^{} e \left( \frac{ar_q}{q'}\right) \right|.
	\end{equation}
	In order to apply Theorem  \ref{bourgain} to the sum on the right-hand side, we first need to check that the subgroup $H'_q$ of $\left( \Z / q' \Z \right)^\times$ is large in the following sense: $|H'_q| \geqslant (q')^{\delta'}$ for some $\delta' > 0$. Using the fact that $|H_q| \geqslant q^\delta$, we have
	$$ |H'_q| = \frac{|H_q|}{k} \geqslant \frac{|H_q|}{p^{\beta_q}} \geqslant \frac{q^\delta}{p^{\beta_q}} = \frac{(q')^\delta}{\left(p^{\beta_q}\right)^{1-\delta}} \geqslant \frac{(q')^\delta}{C_f^{1-\delta}},$$
	where the last lower bound comes from the inequality $p^{\beta_q} \leqslant C_f$ given by Proposition \ref{lembezout} (b). This inequality also ensures that $q'$ tends to infinity as $q$ goes to infinity, so that $\frac{(q')^{\frac{\delta}{2}}}{C_f^{1- \delta}}$ eventually becomes greater than $1$ as $q$ becomes large. Therefore,
	\begin{equation} \label{deltasurdeux}
	|H'_q| \geqslant (q')^{\frac{\delta}{2}}
	\end{equation}
	for all $q$ large enough. Thanks to \eqref{deltasurdeux}, Theorem \ref{bourgain} applies to the sum on the right-hand side of \eqref{pbetaq}, because we also have that $r_q$ is invertible modulo $q'$. So there exists a constant $\varepsilon = \varepsilon(\delta /2) > 0$ and a constant $C$ depending at most on $\delta$ such that
	$$\left| \sum_{a \in H'_{q}}^{} e \left( \frac{ar_q}{q'}\right) \right| \leqslant C \frac{|H'_q|}{(q')^\varepsilon} \cdot$$
	Thanks to \eqref{formerq} and \eqref{pbetaq}, this implies the following upper bound:
	$$\left|\sum_{a \in H_{q}}^{} e \left( \frac{af(w_{q})}{q}\right)\right| \leqslant  C \frac{p^{\beta_q}|H'_q|}{(q')^\varepsilon} \leqslant C \frac{p^{\beta_q}|H_q|}{\left(p^{\alpha - \beta_q}\right)^\varepsilon} =  C \frac{|H_q|p^{\beta_q (1+\varepsilon)}}{q^\varepsilon} \leqslant CC_f^{1+\varepsilon} \frac{|H_q|}{q^\varepsilon} \ll_{f, \delta} \frac{|H_q|}{q^\varepsilon}$$
	which concludes the proof. \qed

\section{Proofs of the propositions on equidistribution modulo 1}	\label{seclemmamod1}

\subsection{Proof of Proposition \ref{equimyersoncase1} (Equidistribution modulo $1$ case (a))} \label{secpreuveequimyersoncase1}

	We are interested in the equidistribution modulo $1$ of the following sets of $(\varphi(d_1) + \dots + \varphi(d_n))$-tuples (with the slight abuse of notation underlined at remark \ref{abusdenot}):
	
	$$\bigg \{  \overbrace{\left( \left(\frac{a_1 ( w_q^{m_1})^j}{q}\right)_{0 \leqslant j <  \varphi(d_1)} , \dots, \left(\frac{a_n (w_q^{m_n})^j}{q}\right)_{0 \leqslant j < \varphi(d_n)} \right)}^{=: \xx(a_1, \dots, a_n,q)}; \quad (a_1, \dots, a_n) \in H^{(1)}_q \times \dots \times H^{(n)}_q \bigg \},
	$$	 
	where the $H_q^{(i)}$ are subgroups of $\zqzc$ satisfying the following growth condition:
	$$\forall q \in \mathcal A_d, \forall i \in \{1, \dots, n\}, \  |H_q^{(i)}| \geqslant q^\delta$$ for some $\delta > 0$.
	By Weyl's criterion (see \cite[Theorem 6.2]{kuipers}), these sets become equidistributed modulo 1 if and only if for any $\yy = \left(y_0, \dots, y_{\varphi(d_1)+ \dots + \varphi(d_n)-1}\right) \in \Z^{\varphi(d_1)+ \dots + \varphi(d_n)} \setminus \{0\}$, we have the following convergence towards zero:
	
	\begin{equation} \label{retendvers0} 	\frac{1}{\prod_{i= 1}^{n} |H_q^{(i)}|} \times \  \sum_{\substack{a_1 \in H^{(1)}_q \\ \svdots \\ a_n \in H_q^{(n)}}} e\left(\xx(a_1,\dots,a_n,q)\cdot \yy\right) \tend{\substack{q \to \infty \\  q \in \mathcal A_d}} 0.
	\end{equation}

	Let us denote by $\yy_1$ the vector extracted from $\yy$ by taking the first $\varphi(d_1)$ entries, $\yy_2$ the vector formed by the next $\varphi(d_2)$ entries and so on:
	$$\yy_1 = (y_0, \dots, y_{\varphi(d_1)-1}), \quad \yy_2 = (y_{\varphi(d_1)} ,  \dots, y_{\varphi(d_1) + \varphi(d_2) - 1}) \quad \yy_3 = \cdots $$
	so that $\yy = (\yy_1, \dots, \yy_n)$. We also introduce the following notations to decompose the vector $\xx(a_1, \dots, a_n,q)$ in a parallel way:
	
	$$\xx_1(a_1,q) := \left(\frac{a_1 ( w_q^{m_1})^j}{q}\right)_{0 \leqslant j <  \varphi(d_1)}, \dots,\  \xx_n(a_n,q):= \left(\frac{a_n ( w_q^{m_n})^j}{q}\right)_{0 \leqslant j < \varphi(d_n)}$$
	Then we have
	
	\begin{equation} \label{produit}
	\frac{1}{\prod_{i= 1}^{n} |H_q^{(i)}|} \times \  \sum_{\substack{a_1 \in H^{(1)}_q \\ \svdots \\ a_n \in H_q^{(n)}}} e\left(\xx(a_1,\dots,a_n,q)\cdot \yy\right) =\prod_{i=1}^{n} \left[\frac{1}{ |H_q^{(i)}|} \sum_{a_i \in H_q^{(i)}}^{} e(\xx_i(a_i,q) \cdot \yy_i)\right].
	\end{equation}
	
	Now, since $\yy \neq 0$, there exists at least one index $i \in \{1, \dots, n\}$ such that $\yy_{i} \neq 0$. For such an $i$, the factor
	
	\begin{equation} \label{termei}
\frac{1}{ |H_q^{(i)}|} \sum_{a_i \in H_q^{(i)}}^{} e(\xx_i(a_i,q) \cdot \yy_i)
	\end{equation}
	
	tends to $0$ as $q$ goes to infinity among the $d$-admissible integers thanks to Proposition \ref{myersongen}. Indeed, we have
	
$$\frac{1}{ |H_q^{(i)}|} \sum_{a_i \in H_q^{(i)}}^{} e(\xx_i(a_i,q) \cdot \yy_i)=	\frac{1}{|H_q^{(i)}|} \sum_{a \in H_q^{(i)}}^{} e\left( \frac{af_i(w_q^{m_i})}{q}  \right),$$
where $f_i$ is the polynomial associated with $\yy_i =\left(y_{\varphi(d_1) + \dots + \varphi(d_{i-1})}, \dots, y_{\varphi(d_1) + \dots + \varphi(d_{i-1}) + \varphi(d_i)-1} \right)$ as follows: $f_i = y_{\varphi(d_1) + \dots + \varphi(d_{i-1})} + y_{\varphi(d_1) + \dots + \varphi(d_{i-1}) +1}  X + \dots + y_{\varphi(d_1) + \dots +\varphi(d_{i-1}) + \varphi(d_i)-1} X^{\varphi(d_i)-1} $. This is a non-zero polynomial with integer coefficients and with degree strictly less than $\varphi(d_i)$, and $w_q^{m_i}$ is an element of order $d_i$ in $\zqzc$. Thus, we can apply Proposition \ref{myersongen} which states that there exists a rank $N_{f_i}$ such that for all $q > N_{f_i}$ such that $q$ is $d$-admissible,
$$ \left| \sum_{a \in H_q^{(i)}}^{} e\left( \frac{af_i( w_q^{m_i})}{q}  \right)\right| \ll_{\delta, f_i} \frac{|H_q^{(i)}|}{q^{\varepsilon(\delta)}}, $$ 
and this suffices to prove the convergence of \eqref{termei} towards zero. As all the other factors of \eqref{produit} have absolute value bounded above by $1$, the whole product converges to zero, and this concludes the proof. \qed

\begin{rem} 
	The proof shows why it is important to ask for the growth condition \eqref{growth1} instead of \eqref{growth2} (that will be used in the next proof). Indeed, let us fix an index $j \in \{1, \dots, n\}$. Then if we take $\yy = (\yy_1, \dots, \yy_n) \in \Z^{\varphi(d_1)+ \dots + \varphi(d_n)} \setminus \{0\}$ defined by $ \yy_i = (0, \dots, 0) \in \Z^{\varphi(d_i)}$ for all $i \neq j$ and $\yy_j = (1, \dots, 1) \in \Z^{\varphi(d_j)}$, then the absolute value of the product \eqref{produit} is equal to the absolute value of the factor corresponding to the index $j$, since all the other factors are equal to $1$. Therefore, to prove the convergence towards zero in Weyl's criterion for this specific vector $\yy$, we have no other choice than proving that the factor corresponding to the index $j$ tends to $0$. In order to achieve that, we really need to be able to apply Proposition \ref{myersongen} to this factor, hence we really need to require $|H_q^{(j)}| \geqslant q^\delta$. As $j$ was arbitrary, this shows that the growth condition needs to be satisfied for all $j \in \{1, \dots, n\}$.
\end{rem}

\subsection{Proof of Proposition \ref{equimyersoncase2} (Equidistribution modulo $1$ case (b))} \label{secpreuveequimyersoncase2}
 We are interested in the equidistribution modulo $1$ of the following sets of $\varphi(d)$-tuples: 

\begin{multline*} \bigg\{  \overbrace{\left(\frac{a_1 ( w_q^{m_1})^0 + \dots + a_n ( w_q^{m_n})^0}{q}, \dots, \frac{a_1 ( w_q^{m_1})^{\varphi(d)-1} + \dots + a_n ( w_q^{m_n})^{\varphi(d)-1}}{q} \right)}^{=: \xx(a_1,\dots, a_n, q)}; \\  (a_1, \dots, a_n) \in H^{(1)}_q \times \dots \times H^{(n)}_q \bigg \},
\end{multline*}
where the $H_q^{(i)}$ are subgroups of $\zqzc$ satisfying the growth condition
\begin{equation} \label{growth2reprise}
	\forall q \in \mathcal A_d, \exists i \in \{1, \dots, n\}, \quad |H_q^{(i)}| \geqslant q^{\delta}.
\end{equation}

By Weyl's criterion, these sets become equidistributed if and only if for any $\yy := \left(y_0, \dots, y_{\varphi(d) -1} \right) \in \Z^{\varphi(d)} \setminus \{0\}$ we have the following convergence towards zero:

\begin{equation} \label{tendvers0} \frac{1}{\prod_{i= 1}^{n} |H_q^{(i)}|}\times \  \left( \sum_{(a_1, \dots, a_n) \in H^{(1)}_q \times \dots \times H^{(n)}_q}^{} e\left(\xx(a_1,\dots,a_n,q)\cdot \yy\right) \right) \tend{\substack{q \to \infty \\  q \in \mathcal A_d}} 0
\end{equation}

But the left-hand side can be rewritten as:

\begin{equation} \label{cadepasse} 
	 \prod_{i= 1}^{n} \left[ \frac{1}{|H_q^{(i)}|} \sum_{a_i \in H_q^{(i)}}^{} e\left( \frac{a_if( w_q^{m_i})}{q} \right) \right]
 \end{equation}
where $f$ is the polynomial $y_0 + y_1X + \dots + y_{\varphi(d)-1} X^{\varphi(d)-1}$.\\

Now, since $(m_i,d) = 1$, we have that for all $i \in \{1, \dots, n\}$ the element $w_q^{m_i}$ is still of order $d$ in $\zqzc$. Also, $f \in \Z[X] \setminus \{0\}$ and $\deg f < \varphi(d)$. Thus, we can use Proposition \ref{myersongen} to bound the modulus of the inner sum for any index $i$ such that $|H_q^{(i)} | \geqslant q^\delta$ (and there exists at least one such index $i$ thanks to the growth condition \eqref{growth2reprise}).  \\
We deduce that there exists an integer $N_f$ such that for all $q > N_f$ such that $q$ is $d$-admissible, we have:
	$$ \frac{1}{|H_q^{(i)}|} \left| \sum_{a_i \in H_q^{(i)}}^{} e\left( \frac{a_if( w_q^{m_i})}{q} \right) \right| \ll_{f, \delta} \frac{1}{q^\varepsilon} \quad \text{ where } \varepsilon = \varepsilon(\delta) > 0$$
	for at least one $i \in \{1, \dots, n\}$, which may vary with $q$. As the other factors in the product \eqref{cadepasse} have absolute value bounded above by one, we may conclude that the whole product has its modulus bounded above by $1/ q^\varepsilon$ (up to multiplicative constants), hence the conclusion. \\ \qed

\paragraph{Conclusion of the proof of Theorem \ref{thprincipal}.} In this section, we proved Proposition \ref{equimyersoncase1} and Proposition \ref{equimyersoncase2}, and each of them implies one of the cases of Theorem \ref{thprincipal} thanks to propositions \ref{reducmod1case1} and \ref{reducmod1}. This completes the proof of our main result.

\subsection{Remarks on Theorem \ref{thprincipal} and connexion with previous results}

\begin{rem} \label{fix}
There is one last refinement that we did not discuss so far but that can be observed through the proofs: it is the fact that one can fix some of the coefficients $a_i$ when one studies the equidistribution of the restricted sums
$$\sum_{\substack{x \in (\zqz)^\times \\ x^d = 1}} e\left(\frac{a_1 x^{m_1} + \dots + a_n x^{m_n}}{q}\right)$$
in the case where the $m_i$ are coprime with $d$. This remark is not included in Theorem \ref{thprincipal} (b) because the statement was already quite long without it. \\
Let $s \in \{1, \dots, n\}$ and let $\{i_1, \dots, i_s\} \subseteq \{1, \dots, n\}$. We fix $n-s$ integers $a_i$ for $i \in \{1, \dots, n\} \setminus \{i_1, \dots, i_s\}$. Then the sets of sums:

\begin{equation} \label{somefixed}	\left\{ \sum_{\substack{x \in (\zqz)^\times \\ x^d = 1}} e\left(\frac{a_1 x^{m_1} + \dots + a_n x^{m_n}}{q}\right); \ (a_{i_1}, \dots,a_{i_s}) \in H_q^{(i_1)} \times \dots \times H_q^{(i_s)} \right\} 
\end{equation}
(in other words, the ones where only some of the $a_i$ are free) become equidistributed in the image of $g_d$ with respect to the same measure as in Theorem \ref{thprincipal} (b) provided there exists $\delta > 0$ such that

\begin{equation} \label{growthi1is}
\forall q \in \mathcal A_d, \exists i \in  \{i_1, \dots, i_s \}, \quad  |H_{q}^{(i)}| \geqslant q^\delta
\end{equation}

Indeed, when one reduces this question to an equidistribution result modulo $1$ as in section \ref{secreduccase2}, one needs to prove that the sets

\begin{multline*} \bigg\{ \left(\frac{a_1 ( w_q^{m_1})^0 + \dots + a_n ( w_q^{m_n})^0}{q}, \dots, \frac{a_1 ( w_q^{m_1})^{\varphi(d)-1} + \dots + a_n ( w_q^{m_n})^{\varphi(d)-1}}{q} \right); \\  (a_{i_1}, \dots, a_{i_s}) \in H^{(i_1)}_q \times \dots \times H^{(i_s)}_q \bigg \}
\end{multline*}
become equidistributed modulo $1$. After applying Weyl's criterion, one gets a factor with complex absolute value equal to $1$, multiplied by the product

\begin{equation} \label{prodi1is} \prod_{i \in \{i_1, \dots,i_s\}}^{} \left[ \frac{1}{|H_q^{(i)}|} \sum_{a_i \in H_q^{(i)}}^{} e\left( \frac{a_if( w_q^{m_i})}{q}  \right) \right] 
\end{equation}

which tends to zero under condition \eqref{growthi1is}, thanks to Proposition \ref{myersongen}.
\end{rem}

\begin{rem}
If we take the subgroups $H_q^{(i)}$ as large as possible, Theorem \ref{thprincipal} gives an equidistribution result for the sets of sums
$$\left\{ \sum_{\substack{x \in (\zqz)^\times \\ x^d = 1}} e\left(\frac{a_1 x^{m_1} + \dots + a_n x^{m_n}}{q}\right); \ a_1, \dots, a_n \in \zqzc \right\}.$$
However, to compare our result to the ones already proved in \cite{periods, menagerie} and \cite{visual}, we would like to have the $a_i$ varying in $\zqz$ instead of $\zqzc$. This is the content of Proposition \ref{propavecmyersonfaible}. In order to obtain this proposition, the first step consists in following the strategy of section \ref{grandesectionreduc} to reduce to two statements on equidistribution modulo $1$, corresponding to the two cases (a) and (b) of the proposition. These two statements are the exact analogues of Proposition \ref{equimyersoncase1} and Proposition \ref{equimyersoncase2}, with the condition $(a_1, \dots, a_n) \in H_q^{(1)} \times \dots \times H_q^{(1)}$ replaced by $(a_1, \dots, a_n) \in \left(\zqz\right)^n$. They are proved using Weyl's criterion, but the convergence towards zero is easier to obtain, since Lemma \ref{myerson} replaces the use of Proposition \ref{myersongen}. For instance, Proposition \ref{propavecmyersonfaible} (b) reduces to the convergence towards zero of the following product

	\begin{equation} \label{prodBb}  \prod_{i \in \{i_1, \dots,i_s\}}^{} \left[ \frac{1}{q} \sum_{a_i \in \zqz}^{} e\left( \frac{a_if(w_q^{m_i})}{q}  \right) \right], \end{equation}
	where $f$ is a non-zero polynomial in $\Z[X]$ with $\deg f < \varphi(d)$. This product is to Proposition \ref{propavecmyersonfaible} (b) what \eqref{prodi1is} was to Theorem \ref{thprincipal} (b), in its extended form of the previous remark. Now, thanks to Lemma $\ref{myerson}$ each of the factors in \eqref{prodBb} is eventually equal to zero as $q \in \mathcal A_d$ goes to infinity, and this finishes the proof. Proposition \ref{propavecmyersonfaible} (a) can be proved by a similar adaptation of the proof of Theorem \ref{thprincipal} (a).

\end{rem}

This proposition generalizes the previous results obtained in \cite{periods, menagerie} and \cite{visual}. In these articles, only sums of the type
$$\sum_{x^d = 1}^{} e\left( \frac{ax}{q}\right) \quad \text{ or } \quad	\sum_{x^d = 1}^{} e\left( \frac{ax+b x^{-1}}{q}\right)$$
were considered. In Proposition \ref{propavecmyersonfaible} (b), we prove that the equidistribution theorems obtained in \emph{loc. cit.} extend to families of sums with a more general numerator inside the exponentials, namely Laurent polynomials of the form $a_1 x^{m_{1}}+ \dots + a_n x^{m_n}$, as soon as the exponents $m_i$ are coprime with $d$. 
Moreover, it also explains that one can fix some of the coefficients $a_i$. As long as one of them varies in all $\zqz$, the equidistribution result will hold. In particular, the equidistribution results obtained in \cite{visual} for sets of Kloosterman sums
$$\mathcal K_q(-,-,d) := \left\{ \K_q(a,b, d):= \sum_{x^d = 1}^{} e\left( \frac{ax+b x^{-1}}{q}\right); \ a,b \in (\zqz)^2 \right\}$$
also hold if one fixes an integer $a_0$, and considers the sets
$$\mathcal K_q(a_0,-,d) := \left\{ \K_q(a_0,b, d); \ b \in \zqz \right\}.$$
Finally, case (a) treats the case where some of the $m_i$ may share prime factors with $d$, and provides the appropriate Laurent polynomial whose image determines the region of equidistribution. 	

\section{Examples of application}
\label{notables} 

\subsection{Laurent polynomials and hypocycloids}

So far, the region of the complex plane in which the sums become equidistributed has only been described as the image of a torus via a Laurent polynomial. However, in some cases, one can give a more explicit description of this image, as it was observed in \cite{menagerie,visual}. In this section, we review some cases where the regions of equidistribution can be described in geometric terms, without referring to the Laurent polynomials $g_d$.

The main ingredient is the following lemma, which states that the region $\H_d$ from Definition \ref{defHd} is precisely the image of a torus via a specific Laurent polynomial.

\begin{lem} \label{image}
	Let $d \geqslant 2$. The image of the map: $$\begin{array}{ccccc}
		f & : & \T^{d-1} &  \to & \C \\
		&& (z_1, \dots, z_{d-1})&  \mapsto & z_1 + \dots+z_{d-1}  + \frac{1}{z_1 \dots z_{d-1}}
	\end{array} $$
	is the region $\H_d$ from Definition $\ref{defHd}$.
\end{lem}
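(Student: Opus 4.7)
The key observation is that $f$ can be rewritten by introducing $z_d := (z_1 z_2 \cdots z_{d-1})^{-1}$, giving
\[
f(z_1,\dots,z_{d-1}) \;=\; z_1 + z_2 + \cdots + z_d, \qquad z_j \in \T,\ \ z_1 z_2 \cdots z_d = 1.
\]
In other words, $f(z_1,\dots,z_{d-1})$ is the trace of the diagonal matrix $\mathrm{diag}(z_1,\dots,z_d) \in \SU(d)$, and by the spectral theorem the image of $f$ coincides with the image of the trace map on $\SU(d)$. The plan is to show that this image is exactly $\H_d$ by proving the two inclusions separately.

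For the inclusion $\H_d \subseteq f(\T^{d-1})$, I would first note that the specialisation $z_1 = z_2 = \cdots = z_{d-1} = e^{i\theta}$ (which forces $z_d = e^{-i(d-1)\theta}$) yields $f = (d-1)e^{i\theta} + e^{-i(d-1)\theta}$, so the entire $d$-cusp hypocycloid lies in the image. Next, the image is compact (continuous image of $\T^{d-1}$), connected, contains the origin (take the $z_j$ to be the $d$ distinct $d$-th roots of unity, which sum to $0$), and is invariant under rotation by $2\pi/d$ via the substitution $(z_1,\dots,z_{d-1}) \mapsto (\omega z_1,\dots,\omega z_{d-1})$ with $\omega = e^{2\pi i/d}$, since this scales $f$ by $\omega$ while preserving the constraint. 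These structural properties, once the other inclusion is known to force the outer boundary to be the hypocycloid, will allow me to fill in the interior of $\H_d$ by a deformation/connectedness argument from the hypocycloid towards the origin.

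For the reverse inclusion $f(\T^{d-1}) \subseteq \H_d$, I would use the fact that for $d \geq 3$ the source $\T^{d-1}$ has real dimension at least $2$, so any boundary point of $f(\T^{d-1})$ (viewed in $\C \cong \R^2$) must be a \emph{critical value}: at an interior point $(z_1,\dots,z_{d-1})$ the Jacobian has rank $2$ and the submersion theorem makes $f$ locally open. A direct computation gives
\[
\frac{\partial f}{\partial \theta_k} \;=\; i\bigl(z_k - z_d\bigr), \qquad k = 1,\dots,d-1,
\]
so the rank drops below $2$ iff all the complex numbers $z_k - z_d$ are real-collinear. Since a line meets $\T$ in at most two points, this forces $\{z_1,\dots,z_d\}$ to take at most two distinct values $a,b \in \T$, say with $n$ copies of $a$ and $d-n$ copies of $b$, subject to $a^n b^{d-n} = 1$. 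I would then analyse the resulting one-parameter families of critical values $na + (d-n)b$ and show that the outermost envelope is obtained for $n \in \{1,d-1\}$, which gives exactly the hypocycloid parametrisation $(d-1)e^{i\theta} + e^{-i(d-1)\theta}$, while the other choices of $n$ produce curves lying strictly inside $\H_d$. The case $d = 2$ can be treated directly, since $f(z_1) = z_1 + z_1^{-1} \in [-2,2] = \H_2$.

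The main obstacle is this last classification of critical values: checking that among all two-value configurations, the extremal one (in modulus, at each argument) is $(a,\dots,a,b)$, thereby identifying the hypocycloid as the outer boundary. The needed computation is straightforward once one fixes the argument of $f$ and uses the symmetry $\omega \mapsto \omega^{-1}$ together with the rotational invariance by $2\pi/d$ established above. Combining both inclusions then yields $f(\T^{d-1}) = \H_d$.
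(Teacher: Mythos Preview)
The paper does not actually prove this lemma: its ``proof'' consists of a citation to \cite[Theorem 3.2.3]{cooper} and \cite[section 3]{kaiser}, together with the remark that the question is equivalent to determining which complex numbers arise as traces of $\SU(d)$ matrices. Your plan therefore goes well beyond what the paper itself does, and in fact what you outline is essentially the argument carried out in the Kaiser reference: identify $f$ with the trace on $\SU(d)$, compute the critical locus by the rank condition on the Jacobian, observe that criticality forces the $z_j$ to take at most two values, and then show that among the resulting one-parameter critical curves the outermost is the hypocycloid.

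Your outline is correct, with one step that deserves to be made sharper. The ``deformation/connectedness argument'' you invoke to pass from ``the hypocycloid and the origin lie in the image'' to ``all of $\H_d$ lies in the image'' is not justified by connectedness alone: a connected compact set containing $0$ and the hypocycloid need not be all of $\H_d$. The clean way to close this is the standard open-mapping argument you already have in hand for the other inclusion: away from critical points $f$ is a submersion, hence open, so each connected component of $\C \setminus \{\text{critical values of } f\}$ is either entirely contained in $f(\T^{d-1})$ or disjoint from it (the image is closed, and its topological boundary consists of critical values). Once you have classified the critical curves and shown they are the hypocycloid together with curves lying strictly inside $\H_d$, the component containing $0$ is in the image, and you propagate outward component by component up to the hypocycloid. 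This replaces the vague ``deformation'' with a finite combinatorial check on components, and is how the cited references proceed.
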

	\begin{proof}
	See \cite[Theorem 3.2.3]{cooper} or \cite[section 3]{kaiser}. Note that this is equivalent to asking the question: \enquote{which complex numbers arise as the trace of a matrix in $\SU(d)$?}
\end{proof}

The above Laurent polynomial arises in our equidistribution results in the following way: when $d$ is a prime number, the $d^{\text{th}}$ cyclotomic polynomial $\phi_d$ is equal to $X^{d-1} + \dots + X +1$, and this explicit formula allows one to compute the coefficients $c_{j,k}$ which appear in the definition of $g_d$ (Definition $\ref{defgd}$). This exactly gives the Laurent polynomial of the previous lemma, thus allowing us to have a more concrete description of the region of equidistribution in our results. Precisely, this gives:
\begin{prop}[{\cite[Proposition 1]{menagerie}}] \label{interpret}
	Let $d$ be a prime number. The polynomial $g_d$ from Definition $\ref{defgd}$ is given by $$ \begin{array}{ccccc}
		g_d&:& \T^{\varphi(d)} = \T^{d-1}& \to & \C \\
		&& (z_1,\dots, z_{d-1}) & \mapsto & z_{1}+\ldots +z_{d-1}+\dfrac{1}{z_{1}z_{2}\ldots z_{d-1}}
	\end{array}$$
	and the image of $\T^{d-1}$ via $g_d$ is the region $\H_d$ from Definition $\ref{defHd}$. In particular, the region of the complex plane in which the sums restricted to the subgroup of order $d$ become equidistributed in Theorem $\mathrm{\ref{thprincipal}~(b)}$ and Proposition $\mathrm{\ref{propavecmyersonfaible}~(b)}$ is $\H_d$.
\end{prop}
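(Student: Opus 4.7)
The plan is to reduce the statement to a direct computation of the coefficients $c_{j,k}$ in Definition~\ref{defgd}, and then invoke Lemma~\ref{image} to identify the image as $\H_d$.

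First I would observe that for $d$ prime, $\varphi(d) = d-1$ and the cyclotomic polynomial has the explicit form $\phi_d(X) = 1 + X + X^2 + \dots + X^{d-1}$. The coefficients $c_{j,k}$ are defined by the congruence $X^k \equiv \sum_{j=0}^{d-2} c_{j,k} X^j \pmod{\phi_d}$ for $k \in \{0, \dots, d-1\}$. For $k \in \{0, \dots, d-2\}$, the monomial $X^k$ already has degree strictly less than $d-1$, so no reduction is needed and $c_{j,k} = \delta_{j,k}$. For $k = d-1$, the relation $\phi_d(X) = 0 \pmod{\phi_d}$ yields $X^{d-1} \equiv -1 - X - \dots - X^{d-2} \pmod{\phi_d}$, so $c_{j,d-1} = -1$ for all $j \in \{0, \dots, d-2\}$.

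Plugging these values into the definition $g_d(z_1, \dots, z_{d-1}) = \sum_{k=0}^{d-1} \prod_{j=0}^{d-2} z_{j+1}^{c_{j,k}}$, the terms with $k \in \{0,\dots,d-2\}$ contribute $z_{k+1}$, while the term $k = d-1$ contributes $(z_1 z_2 \cdots z_{d-1})^{-1}$. Summing gives exactly the stated Laurent polynomial $z_1 + \dots + z_{d-1} + 1/(z_1 z_2 \cdots z_{d-1})$.

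Once the explicit formula is established, I would simply apply Lemma~\ref{image} to conclude that $g_d(\T^{d-1}) = \H_d$. The final sentence about Theorem~\ref{thprincipal}~(b) and Proposition~\ref{propavecmyersonfaible}~(b) is then an immediate consequence of substituting this identification of the image into the statements of these results. The entire proof is routine once the formula for $\phi_d$ is written down; no step presents a genuine obstacle, the only care needed is to get the signs right when reducing $X^{d-1}$ modulo $\phi_d$.
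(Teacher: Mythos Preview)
Your proposal is correct and follows essentially the same approach as the paper: the paragraph preceding the proposition explains that since $\phi_d = X^{d-1}+\cdots+X+1$ when $d$ is prime, one computes the $c_{j,k}$ directly and recognizes the resulting Laurent polynomial as that of Lemma~\ref{image}, which then identifies the image with $\H_d$. Your computation of the $c_{j,k}$ is accurate and makes explicit what the paper only sketches.
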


This proposition relies mostly on the fact that when $d$ is a prime number, we have an explicit formula for the $d^{\text{th}}$ cyclotomic polynomial. As there is also an explicit formula for the $d^{\text{th}}$ cyclotomic polynomial when $d= r^b$ is a prime power, namely

$$\phi _{r^b}\left( X\right) =\sum ^{r-1}_{j=0}X^{jr^{b-1}} \left( = \phi_r\left(X^{r^{b-1}}\right)\right),$$
it is not surprising that our understanding of the image of $g_d$ can also be improved in that case. In fact, the explicit formula above leads to the following proposition.

\begin{prop}[{\cite[Corollary 1]{menagerie}}] \label{propgrb}
	Let $d:= r^b$ be a power of a prime number $r$. The polynomial $g_d$ from Definition $\ref{defgd}$ is given by $$ \begin{array}{ccccc}
		g_d&:& \T^{\varphi(d)} = \T^{(r-1)r^{b-1}}& \to & \C \\
		&& (z_1,\dots, z_{(r-1)r^{b-1}}) & \mapsto & \displaystyle \sum ^{(r-1)r^{b-1} }_{j=1}z_{j}+\sum ^{r^{b- 1}}_{m=1}\prod ^{r-2}_{\ell=0}z_{m+\ell r^{b-1}}^{-1}
	\end{array}$$
	and the image of $\T^{\varphi(d)}$ via $g_d$ is the Minkowski sum
	$$\sum_{j=1}^{r^{b-1}} \H_r := \left\{ \xi_1 + \cdots + \xi_{r^{b-1}}; \ \xi_1, \dots , \xi_{r^{b-1}} \in \H_r \right\}.$$ 
	In particular, the region of the complex plane in which the sums restricted to the subgroup of order $d$ become equidistributed in Theorem $\mathrm{\ref{thprincipal}~(b)}$ and Proposition $\mathrm{\ref{propavecmyersonfaible}~(b)}$ is $\displaystyle \sum_{j=1}^{r^{b-1}} \H_r.$
\end{prop}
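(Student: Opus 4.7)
The plan is to reduce Proposition \ref{propgrb} to Lemma \ref{image} by exhibiting an explicit block structure in the Laurent polynomial $g_{r^b}$. First I would compute the coefficients $c_{j,k}$ from Definition \ref{defgd} using the explicit formula $\phi_{r^b}(X) = 1 + X^{r^{b-1}} + X^{2r^{b-1}} + \dots + X^{(r-1)r^{b-1}}$. For $k \in \{0, \dots, (r-1)r^{b-1} -1\}$, the reduction of $X^k$ modulo $\phi_{r^b}$ is trivial, contributing the monomial $z_{k+1}$ to $g_d$. For $k \in \{(r-1)r^{b-1}, \dots, r^b -1\}$, write $k = (r-1)r^{b-1} + \ell$ with $\ell \in \{0,\dots,r^{b-1}-1\}$, and use the relation
$$X^{(r-1)r^{b-1}} \equiv -1 - X^{r^{b-1}} - \dots - X^{(r-2)r^{b-1}} \mod \phi_{r^b}$$
to obtain $X^k \equiv -\sum_{i=0}^{r-2} X^{\ell + ir^{b-1}}$. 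This yields the term $\prod_{i=0}^{r-2} z_{\ell + i r^{b-1} +1}^{-1}$, and reindexing via $m = \ell + 1$ gives exactly the stated formula for $g_d$.

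For the image, I would group the $\varphi(r^b) = (r-1)r^{b-1}$ variables into $r^{b-1}$ disjoint blocks, indexed by $m \in \{1,\dots, r^{b-1}\}$, where the $m^{\text{th}}$ block consists of the $r-1$ variables $(z_{m}, z_{m+r^{b-1}}, \dots, z_{m+(r-2)r^{b-1}})$. The key observation is that the formula for $g_d$ decomposes as a sum over these blocks:
$$g_d(z_1,\dots,z_{(r-1)r^{b-1}}) = \sum_{m=1}^{r^{b-1}} \left( \sum_{\ell=0}^{r-2} z_{m+\ell r^{b-1}} + \prod_{\ell=0}^{r-2} z_{m + \ell r^{b-1}}^{-1} \right).$$
The inner quantity is precisely $f(z_m, z_{m+r^{b-1}}, \dots, z_{m+(r-2)r^{b-1}})$, where $f$ is the Laurent polynomial of Lemma \ref{image} in $r-1$ variables. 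Since the blocks use disjoint sets of variables, the image of $g_d$ on $\T^{(r-1)r^{b-1}}$ is exactly the sum of the images of each block as the corresponding variables range over $\T^{r-1}$.

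Applying Lemma \ref{image} in each block identifies this image with $\H_r$, hence the image of $g_d$ is the Minkowski sum $\sum_{m=1}^{r^{b-1}} \H_r$. The conclusion about the region of equidistribution then follows immediately from Theorem \ref{thprincipal}~(b) and Proposition \ref{propavecmyersonfaible}~(b). There is no real obstacle here: both parts are computations, the only subtle point being the bookkeeping of indices in the reduction step. I would be careful to verify that the exponents $\ell + ir^{b-1}$ for $\ell < r^{b-1}$ and $i \leqslant r-2$ indeed stay within $\{0,\dots,(r-1)r^{b-1}-1\}$, so that no further reduction modulo $\phi_{r^b}$ is needed, which is the only place where the computation could go wrong.
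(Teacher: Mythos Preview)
Your proposal is correct and follows the natural computation. The paper itself does not give a proof of this proposition: it is stated as a citation of \cite[Corollary 1]{menagerie}, and only illustrated afterwards by Example \ref{g9} in the case $r=3$, $b=2$, where the block decomposition you describe is written out explicitly. Your argument---reducing $X^k$ modulo $\phi_{r^b}$ via the relation $X^{(r-1)r^{b-1}} \equiv -\sum_{i=0}^{r-2} X^{ir^{b-1}}$, then grouping the variables into $r^{b-1}$ disjoint blocks of size $r-1$ and applying Lemma \ref{image} to each---is exactly the intended one, and your check that the exponents $\ell + i r^{b-1}$ remain in $\{0,\dots,(r-1)r^{b-1}-1\}$ is the only point requiring care.
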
 
\begin{exemple} \label{g9}
		For instance, as it is done in \cite[Theorem 10]{visual}, for $r=3$ and $b=2$ we have:
	$$g_9(z_1, \dots, z_6) =\underbrace{ z_{1}+z_{4}+\dfrac{1}{z_{1}z_{4}}}_{\in \H_3}+ \underbrace{z_{2}+z_{5}+\dfrac{1}{z_{2}z_{5}}}_{\in \H_3}+\underbrace{z_{3}+z_{6}+\dfrac{1}{z_{3}z_{6}}}_{\in \H_3} \cdot$$
	A drawing of the region of the complex plane $\H_3 + \H_3 + \H_3$ can be found in \cite[Figure 11]{menagerie}.
\end{exemple}

\subsection{Illustration of Theorem \ref{thprincipal}} \label{illusthprincipal}
We fix $d$ to be equal to $5$, and we consider the following sets of Kloosterman sums restricted to the subgroup of order $5$:
\begin{equation} \label{varieSG}
	\left\{ \sum_{\substack{x \in (\zqz)^\times \\ x^5 = 1}} e\left(\frac{a x + b x^{-1}}{q}\right); \ (a,b) \in H_q^{(1)} \times H_q^{(2)}  \right\}
\end{equation}
for increasing $5$-admissible values of $q$, and where $H_q^{(1)}$ and $H_q^{(2)}$ denote subgroups of $\zqzc$. Then Theorem \ref{thprincipal} (b), combined with the geometric interpretation of Proposition \ref{interpret}, states that these sets become equidistributed in the region delimited by a $5$-cusp hypocycloid, with respect to some measure (obtained as the pushforward measure, under the Laurent polynomial $g_5$, of the Haar measure on $\T^4$) \textit{provided the subgroups $H_q^{(1)}$ and $H_q^{(2)}$ of $\zqzc$ satisfy the following growth condition}:\\
there exists $\delta > 0$ such that:
$$\forall q \in \mathcal A_d, \quad |H_q^{(1)} |\geqslant q^\delta \text{ or } |H_q^{(2)}| \geqslant q^\delta .$$
In the following figure, $H_q^{(2)}$ is always chosen to be the trivial multiplicative subgroup, and $|H_q^{(1)}| \geqslant q^{1/2}$.

\begin{figure}[H]
	\captionsetup[subfigure]{justification=centering}
	\centering
	\begin{subfigure}[b]{0.32\textwidth}
		\centering
		\includegraphics[width=\textwidth]{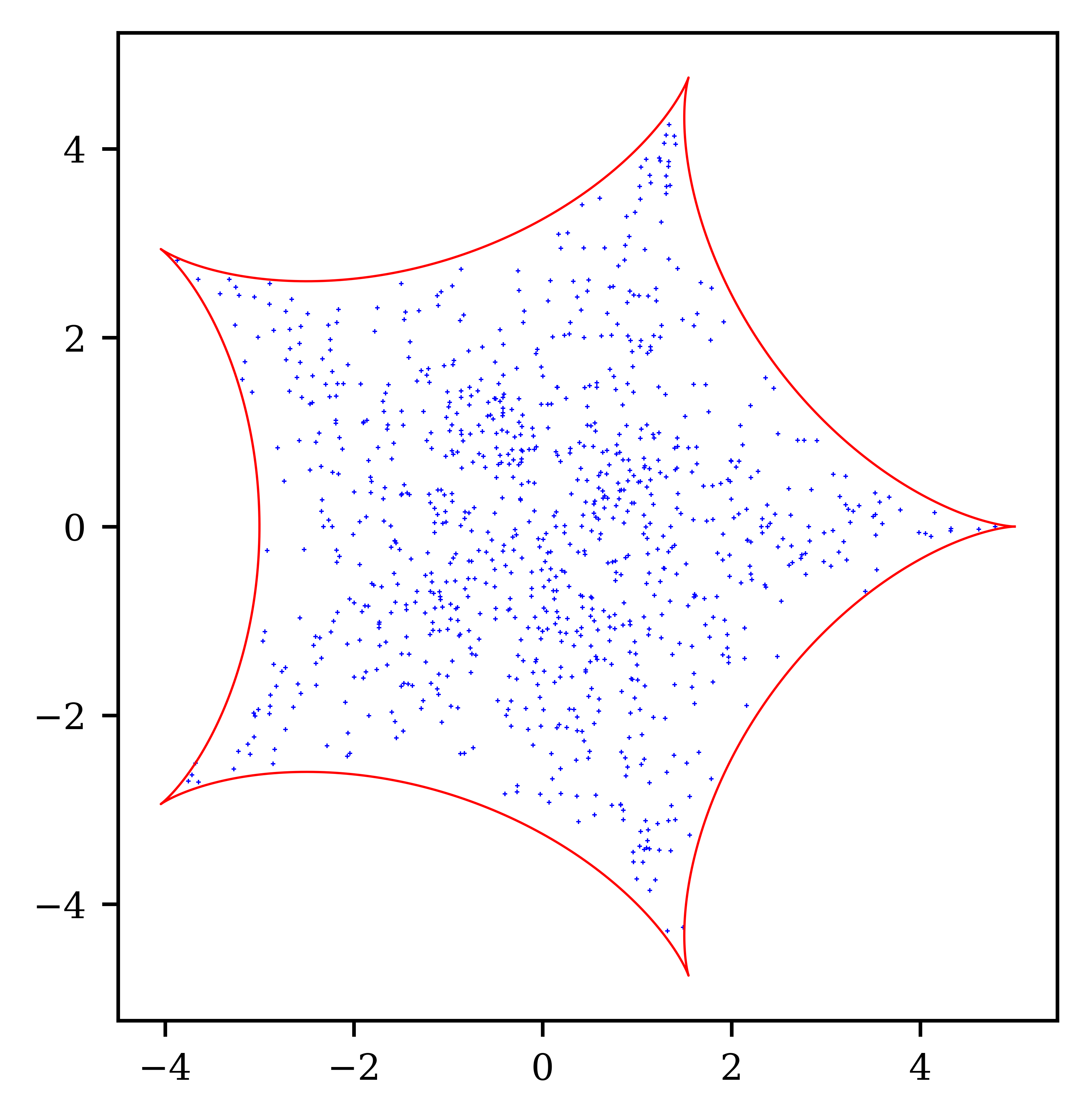}
		\caption{$q = 1901$ \\
			$|H_q^{(1)}| = 950 $\\
			$|H_q^{(2)}|= 1$	}
	\end{subfigure}
	\hfill
	\begin{subfigure}[b]{0.32\textwidth}
		\centering
		\includegraphics[width=\textwidth]{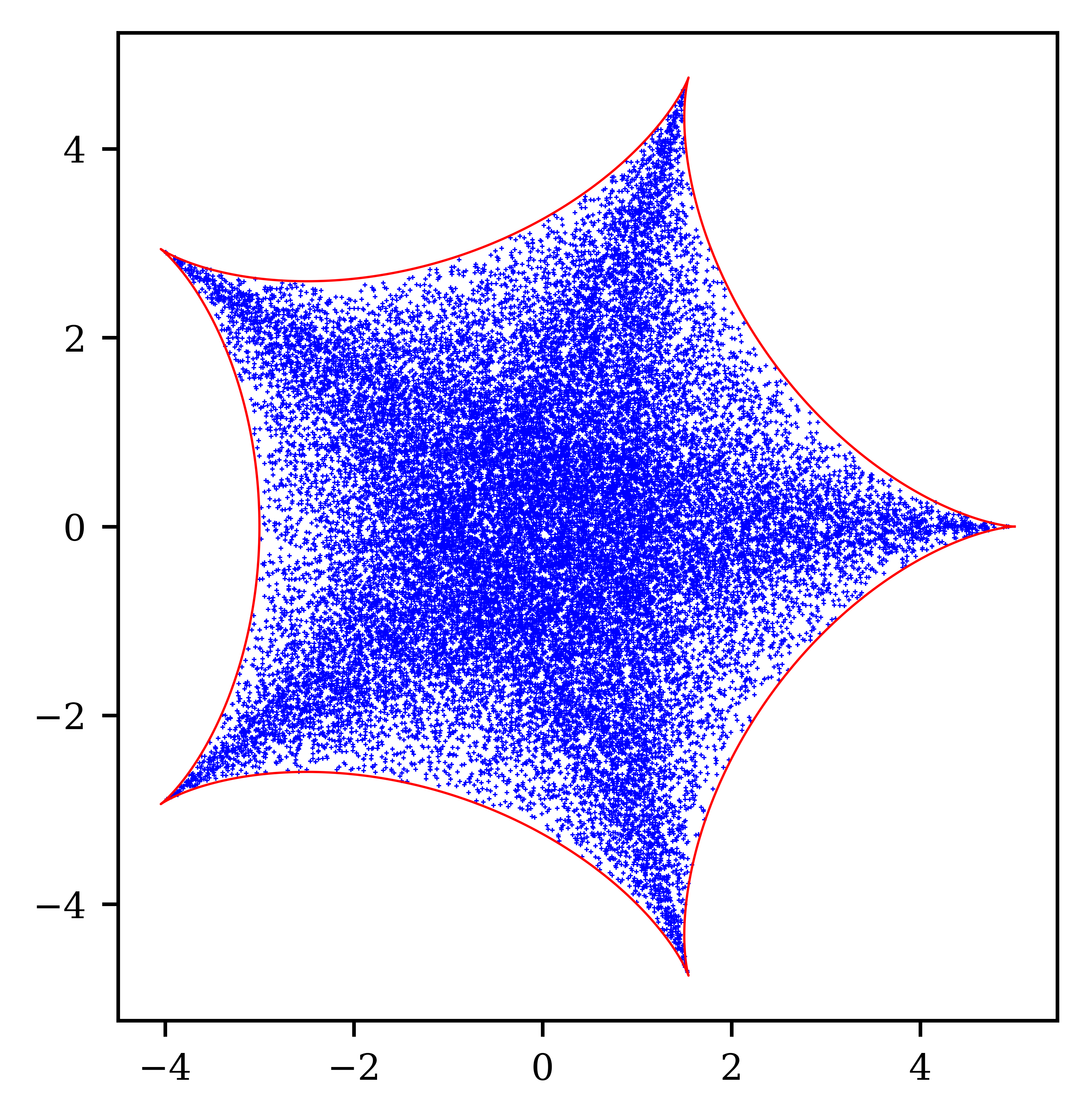}
		\caption{$q = 421^2$ \\
			$|H_q^{(1)}| = 29470 $\\
			$|H_q^{(2)}|= 1$	}
	\end{subfigure}
	\hfill
	\begin{subfigure}[b]{0.32\textwidth}
		\centering
		\includegraphics[width=\textwidth]{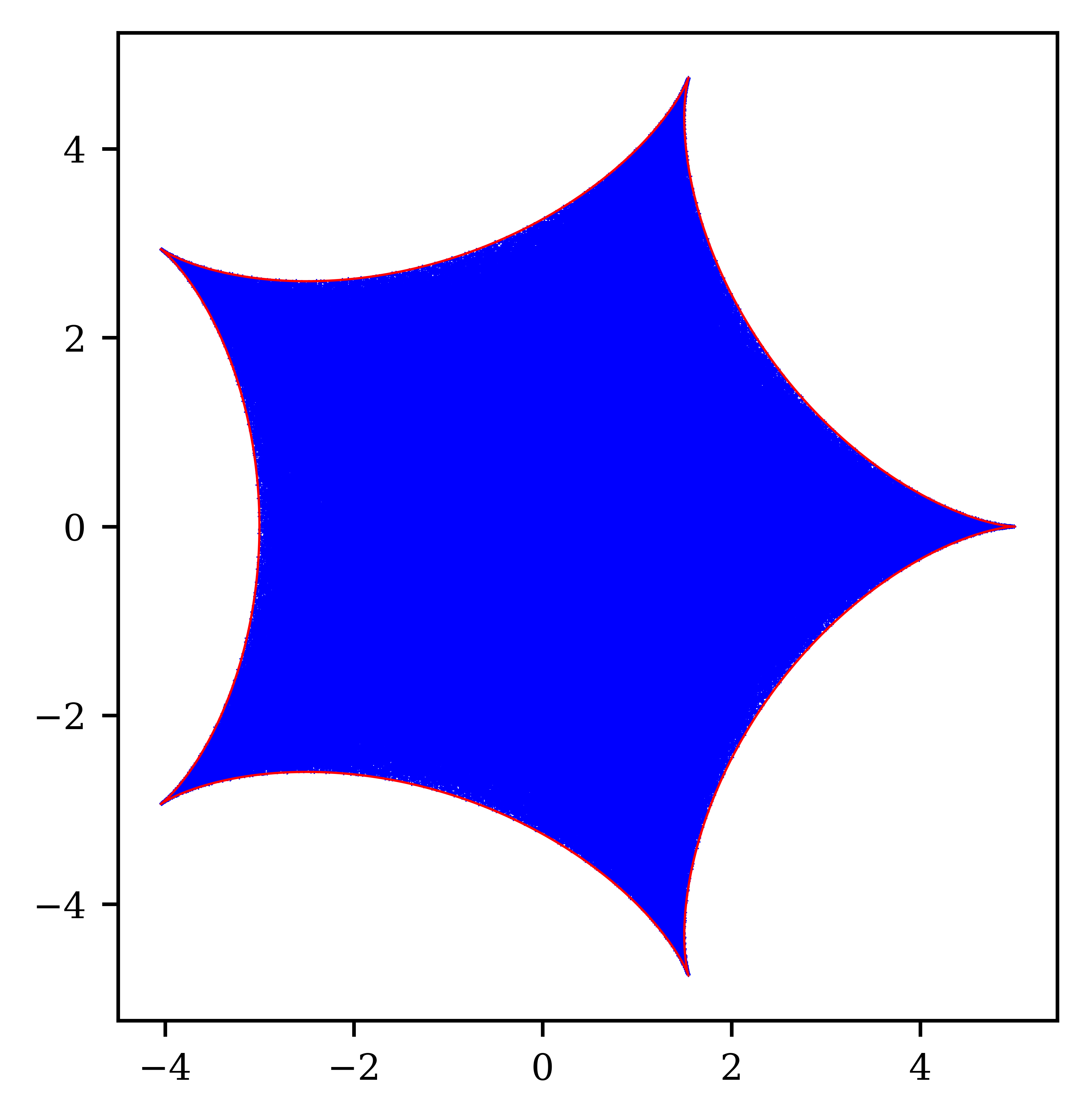}
		\caption{$q = 971^2$ \\
			$|H_q^{(1)}| = 941870 $\\
			$|H_q^{(2)}|= 1$	}
	\end{subfigure}
	\caption{The sets of the form \eqref{varieSG} for three $5$-admissible integers $q$ and for the indicated choice of subgroups $H_q^{(1)}$, $H_q^{(2)}.$}
	
\end{figure}

\subsection{Illustrations of Proposition \ref{propavecmyersonfaible}} \label{illusprop}

First, let us illustrate Proposition \ref{propavecmyersonfaible} (b) in the case where $d$ is a prime number, with the new insight brought by Proposition \ref{interpret}. \\

Let $d$ be a prime number. For all $d$-admissible integer $q$, we consider the sums

$$\Sim_q(a,d):=\sum_{\substack{x \in (\zqz)^\times \\x^d = 1}}^{} e\left(\frac{ax}{q}\right) \text{ for }  a \in \zqz.$$
Since the vector $\mm =(1) \in \Z^1$ is coprime with $d$, Proposition \ref{propavecmyersonfaible} (b) states that these sums all belong to the image of $\T^{d-1}$ via $g_d$, and that the sets:
$$\mathcal S_q(-,d) := \left\{ \Sim_q(a,d);  \ a \in \zqz \right\}$$
become equidistributed in this image, with respect to the pushforward measure of the Haar measure on $\T^{d-1}$. Now, the new insight given by Proposition \ref{interpret} is the interpretation of the image of $g_d$ as the region $\H_d$ delimited by a $d$-cusp hypocycloid.\\
 We illustrate this statement in the case $d=3$. In the figure below, the blue points are the sets $\mathcal S_q(-,3)$ while the red curve is the $3$-cusp hypocycloid from Definition \ref{defhypo}.
\begin{figure}[H]
	\centering
	\begin{subfigure}[b]{0.32\textwidth}
		\centering
		\includegraphics[width=\textwidth]{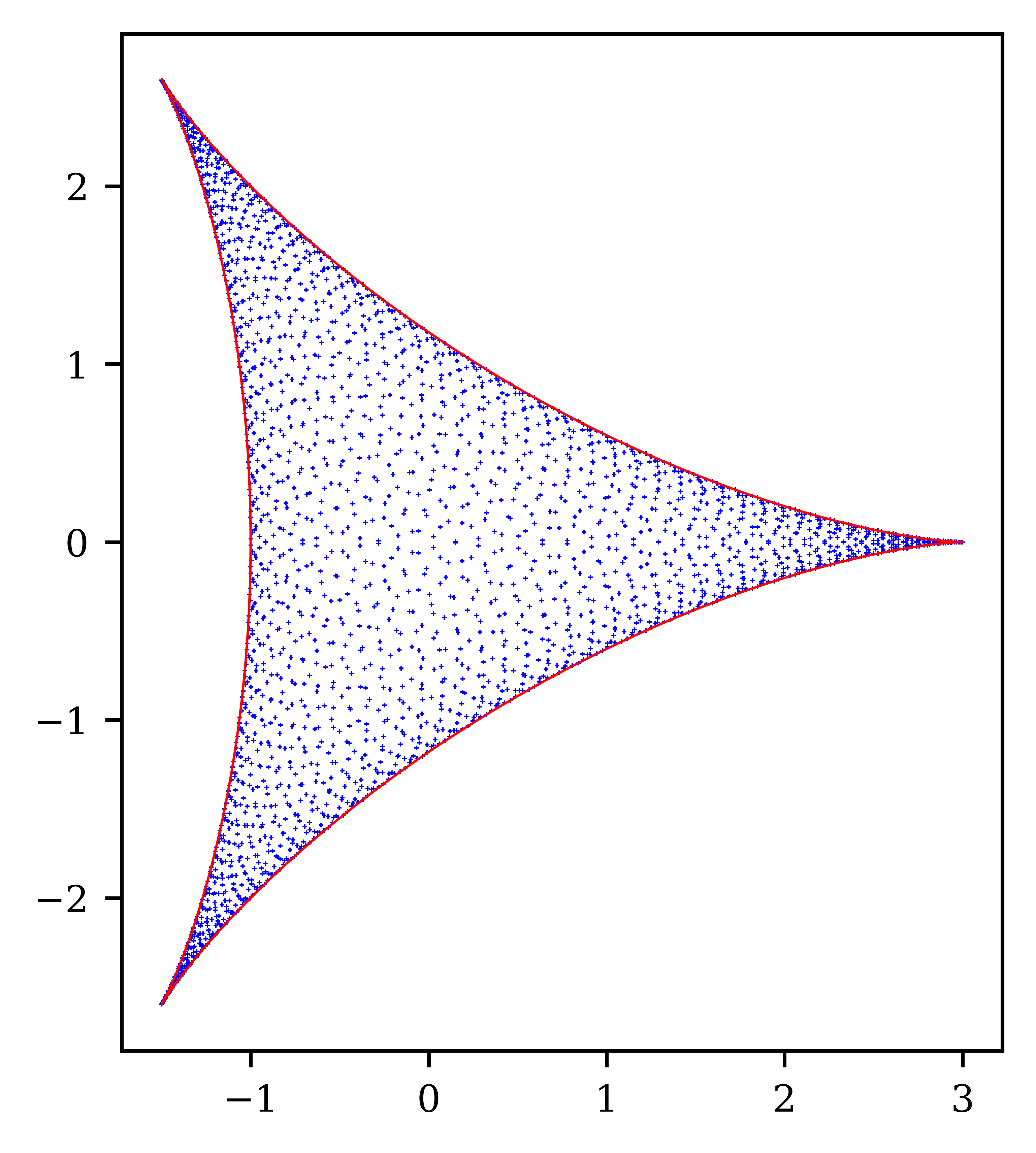}
		\caption{$q = 7759$}
	\end{subfigure}
	\hfill
	\begin{subfigure}[b]{0.32\textwidth}
		\centering
		\includegraphics[width=\textwidth]{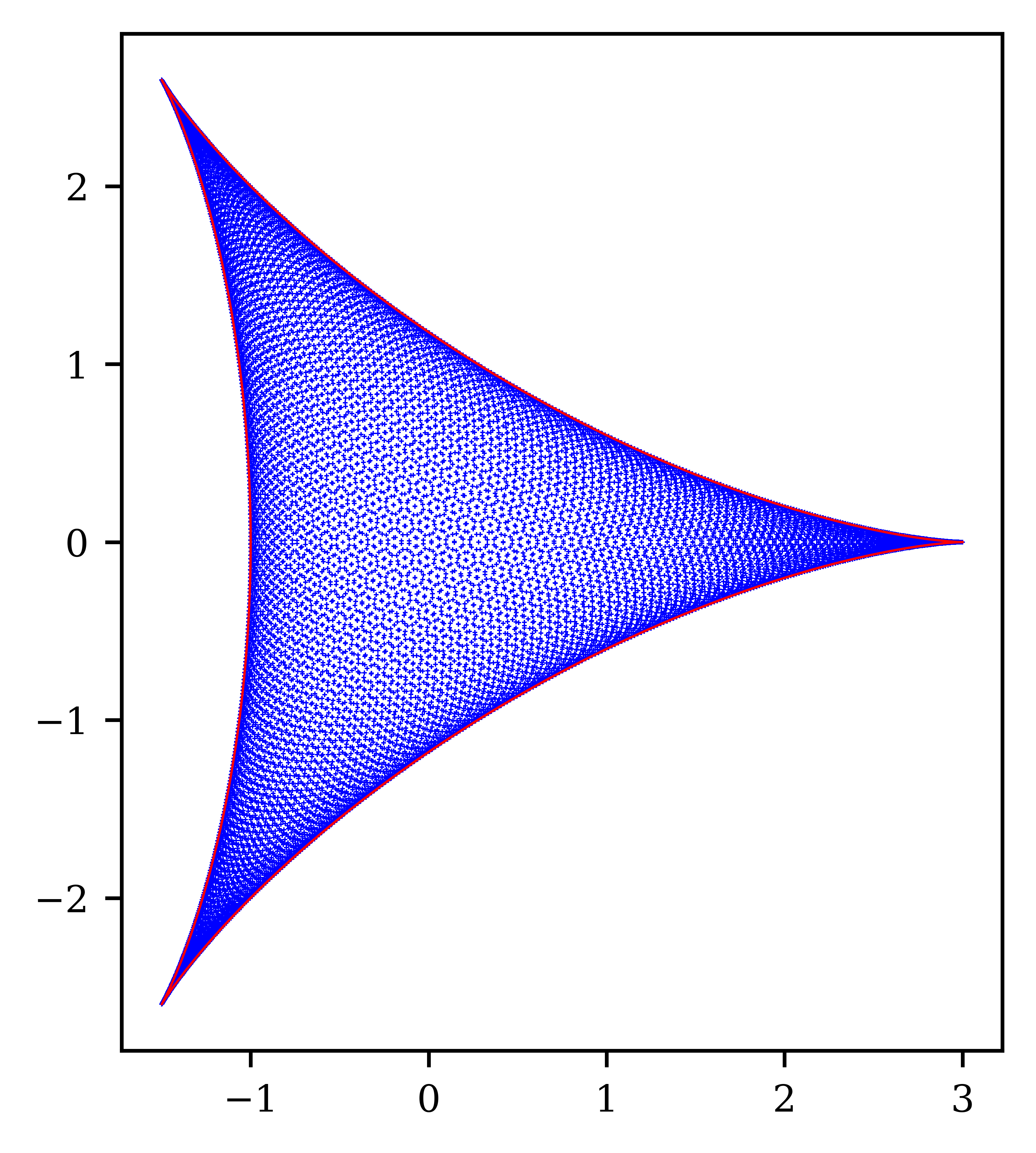}
		\caption{$q= 51361$}
	\end{subfigure}
	\hfill
	\begin{subfigure}[b]{0.32\textwidth}
		\centering
		\includegraphics[width=\textwidth]{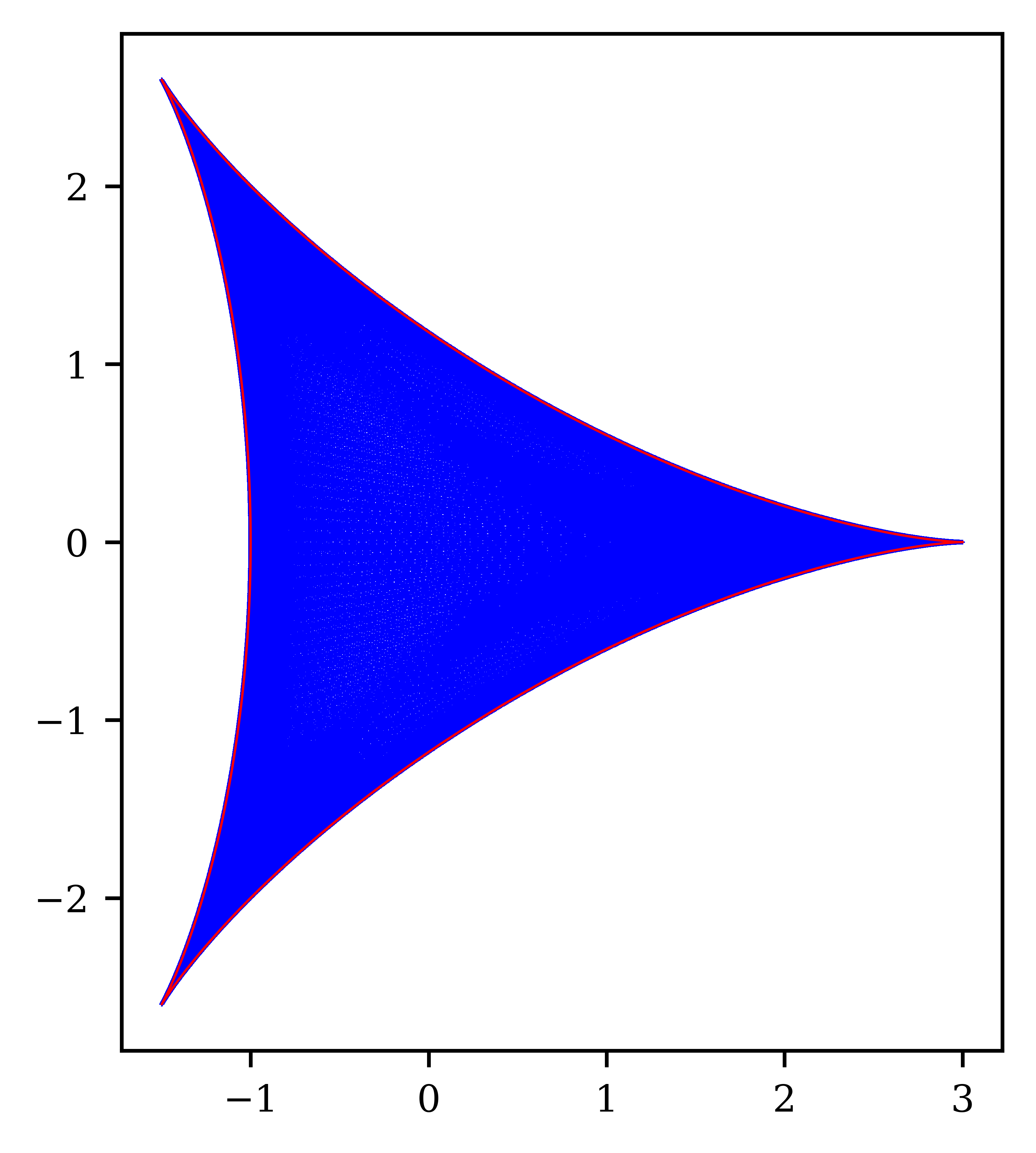}
		\caption{$q=326041 =571^2$}
	\end{subfigure}
	\caption{The sets $\mathcal S_q(-,d)$ for $d= 3$ and three $3$-admissible values of $q$.}
	\label{simpled3}
\end{figure}

However, Proposition \ref{propavecmyersonfaible} (b) covers many other families of exponential sums. For instance, if one takes $\mm$ to be the vector $(1,-1) \in \Z^2$, the same equidistribution phenomenon happens. Indeed, $\mm =(-1,1)$ is coprime with $d$ (for any $d$) and so the proposition also predicts that the sets of sums

$$\mathcal K_q(-,-,d) := \left\{ \K_q(a,b,d) := \sum_{\substack{x \in (\zqz)^\times \\ x^d = 1}} e\left(\frac{ax + bx^{-1}}{q}\right), \quad  a,b \in \zqz \right\} $$

become equidistributed in $\H_d$ as $q$ goes to infinity among the $d$-admissible integer (and with respect to the same measure as in the previous example). Figure \ref{3hypo5branches} of the introduction illustrates this result in the case $d= 5$. In the case where $d= 3$, the comparison of the figure below and Figure \ref{simpled3} illustrates this striking similitude of behaviour for different types of exponential sums, when restricted to subgroups.

\begin{figure}[H]
	\centering
	\begin{subfigure}[b]{0.32\textwidth}
		\centering
		\includegraphics[width=\textwidth]{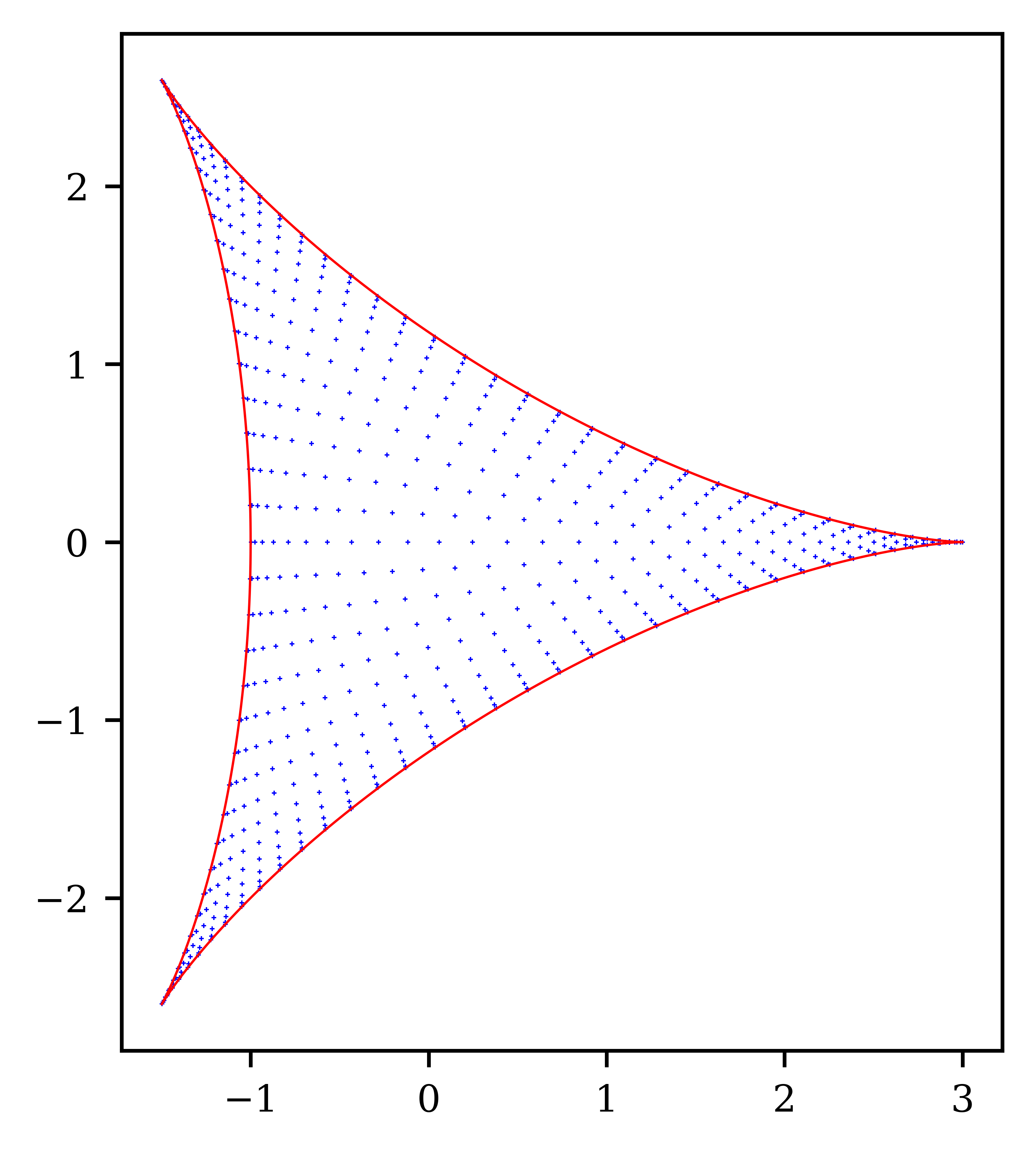}
		\caption{$q = 61$}
	\end{subfigure}
	\hfill
	\begin{subfigure}[b]{0.32\textwidth}
		\centering
		\includegraphics[width=\textwidth]{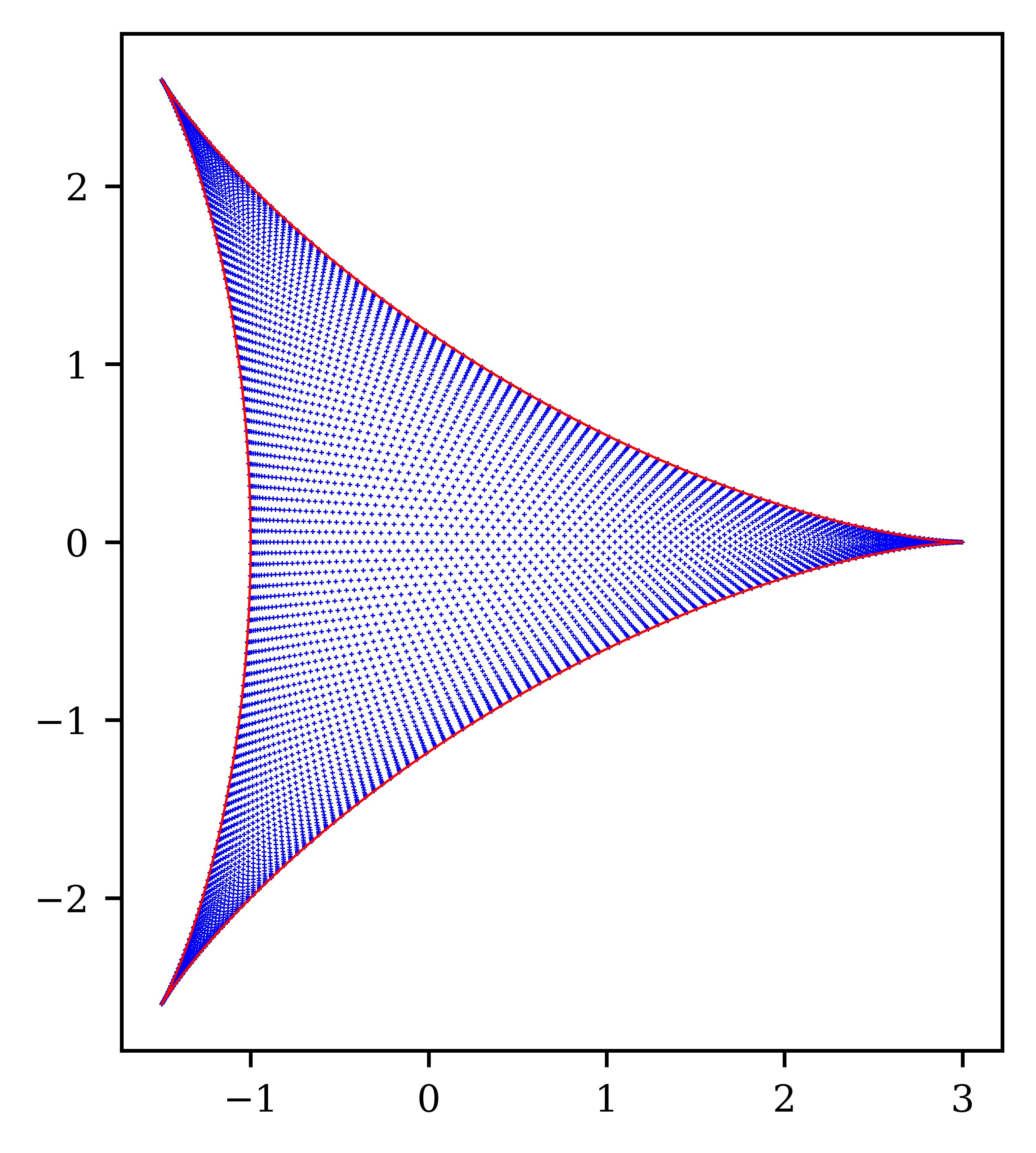}
		\caption{$q= 199$}
	\end{subfigure}
	\hfill
	\begin{subfigure}[b]{0.32\textwidth}
		\centering
		\includegraphics[width=\textwidth]{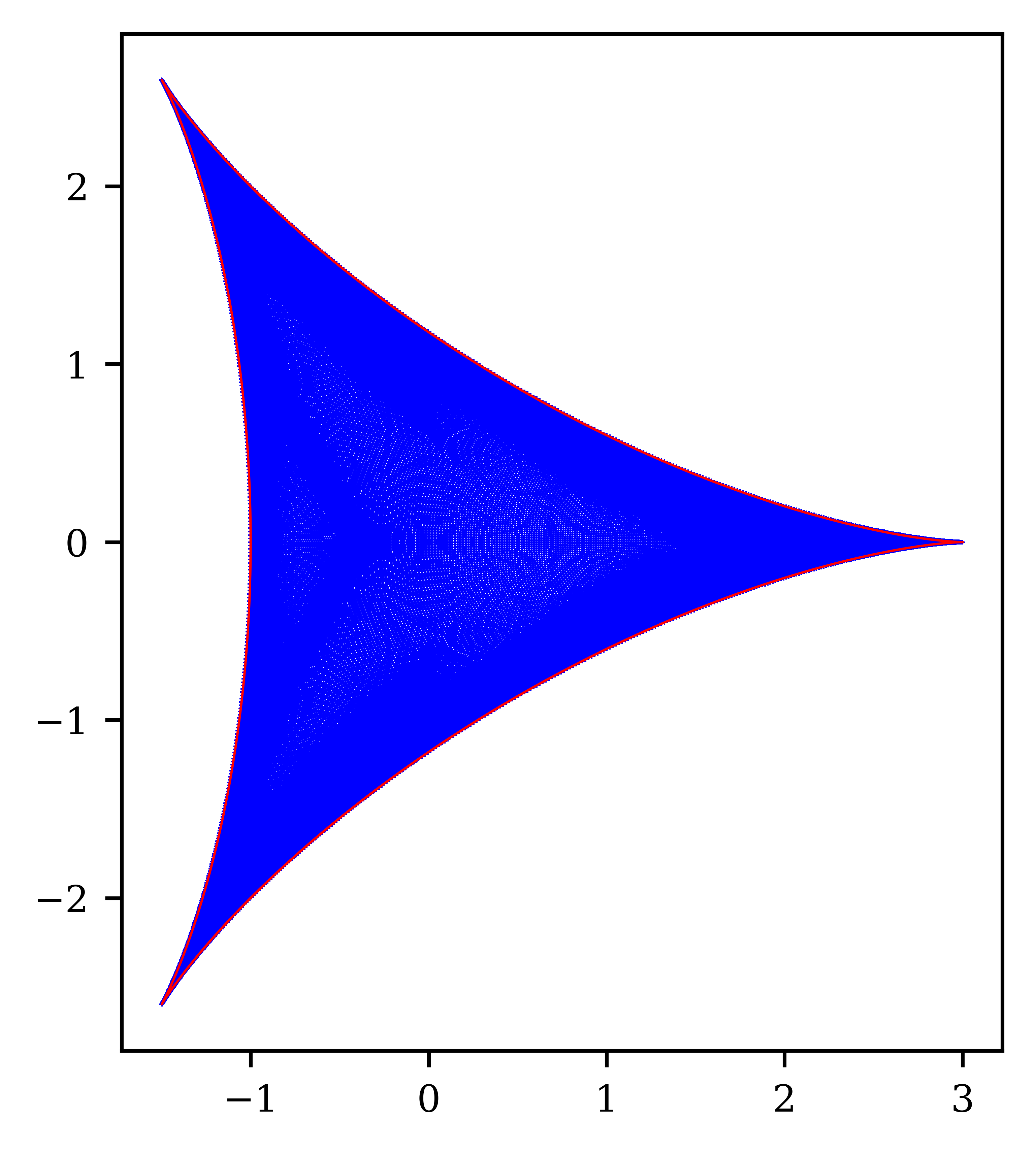}
		\caption{$q=643$}
	\end{subfigure}
	\caption{The sets $\mathcal K_q(-,-,d)$ for $d= 3$ and three $3$-admissible values of $q$.}
	\label{kloosd3}
\end{figure}

Proposition \ref{propavecmyersonfaible} (b) also states that one can fix $a$ and let $b$ vary in all $\zqz$, and vice-versa. For instance, if we take the previous example of restricted Kloosterman sums and choose to fix $a=1$ and let only $b$ vary, the sets of sums:

$$\mathcal K_q(1,-,d) := \{ \K_q(1,b,d); \  b\in \zqz \}$$
will become equidistributed in the same hypocycloid as before, with respect to the same measure.

%

Thus, Proposition \ref{propavecmyersonfaible} allows us to recover the equidistribution results from \cite{periods, menagerie, visual} and extends \cite[Theorem 7 and Theorem 10]{visual} to all values of $d$, and to the case where only one of the two parameter $a$ and $b$ varies.\\

Moreover, Proposition \ref{propavecmyersonfaible} (b) widely generalizes the previously known results to other families of exponential sums. Indeed, sums with $ax$ or $ax+bx^{-1}$ inside the exponentials may now be replaced by sums with $a_1 x^{m_1} + \dots a_n x^{m_n}$ inside the exponentials, provided the $m_i$ are coprime with $d$. For instance, one can consider the sums

$$\QQ_q(a,b,c,d) := \sum_{\substack{x \in (\zqz)^\times \\ x^d = 1}} e\left(\frac{ax^4 + bx^2 +cx}{q}\right) \text{ for } a,b,c \in \zqz, $$
for all $d$-admissible integer $q$. As soon as $d$ is odd, it is coprime with the exponents of $X$ that appear in the polynomials of the form $$aX^4 + bX^2 + cX.$$ Therefore, Proposition \ref{propavecmyersonfaible} (b) applies to this family of sums, as long as the summation is restricted to a subgroup of odd order. So if we look at the case $d=3$ and we draw the sets
$$\mathcal Q_q(-,-,-,3) = \{ \QQ_q(a,b,c,3); \ a,b,c \in \zqz \} $$
for different $3$-admissible values of $q$, we observe the same equidistribution as for the other types of sums, inside a $3$-cusp hypocycloid.
\begin{figure}[H]
	\centering
	\begin{subfigure}[b]{0.3\textwidth}
		\centering
		\includegraphics[width=\textwidth]{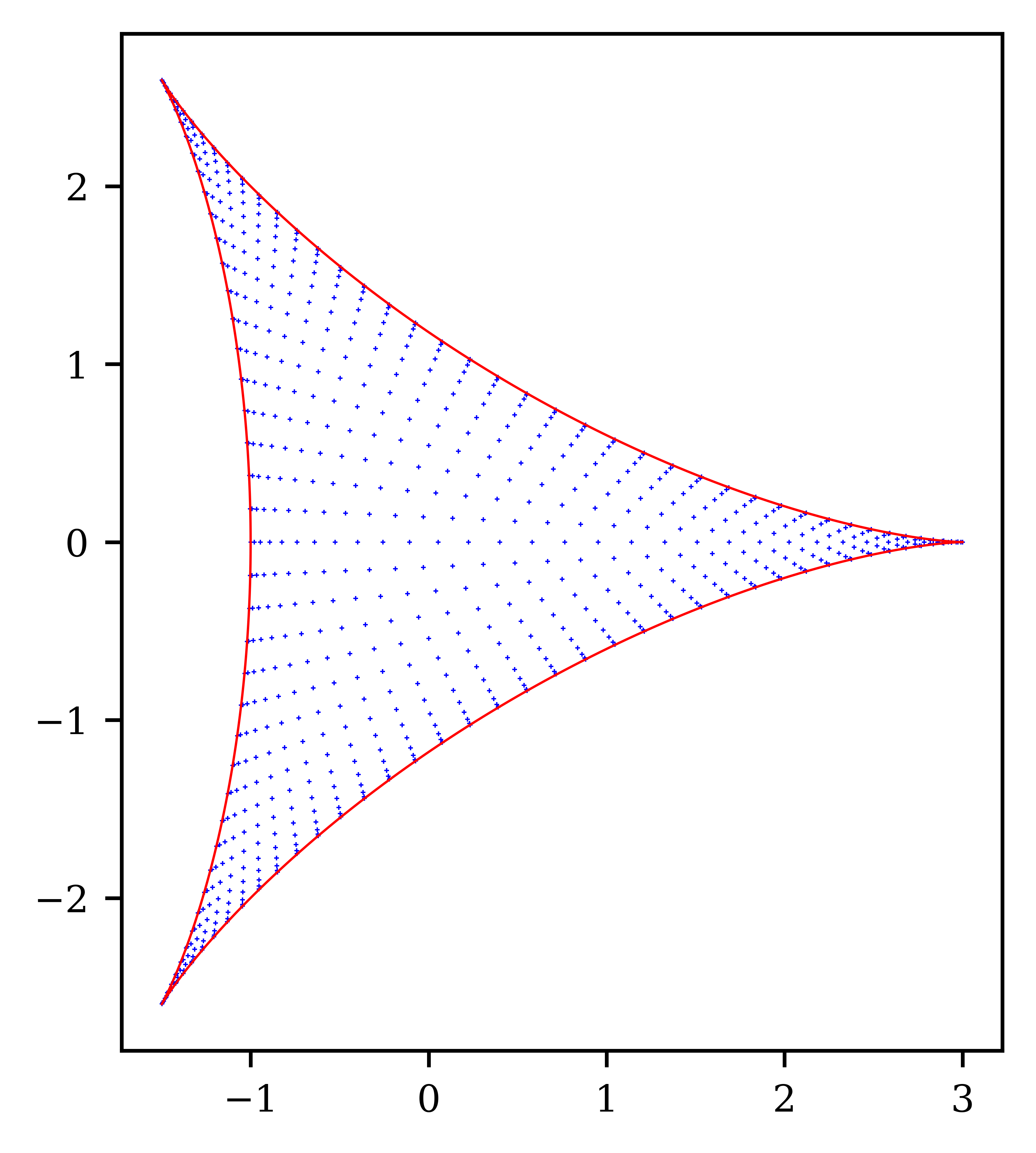}
		\caption{$q = 67$}
	\end{subfigure}
	\hfill
	\begin{subfigure}[b]{0.3\textwidth}
		\centering
		\includegraphics[width=\textwidth]{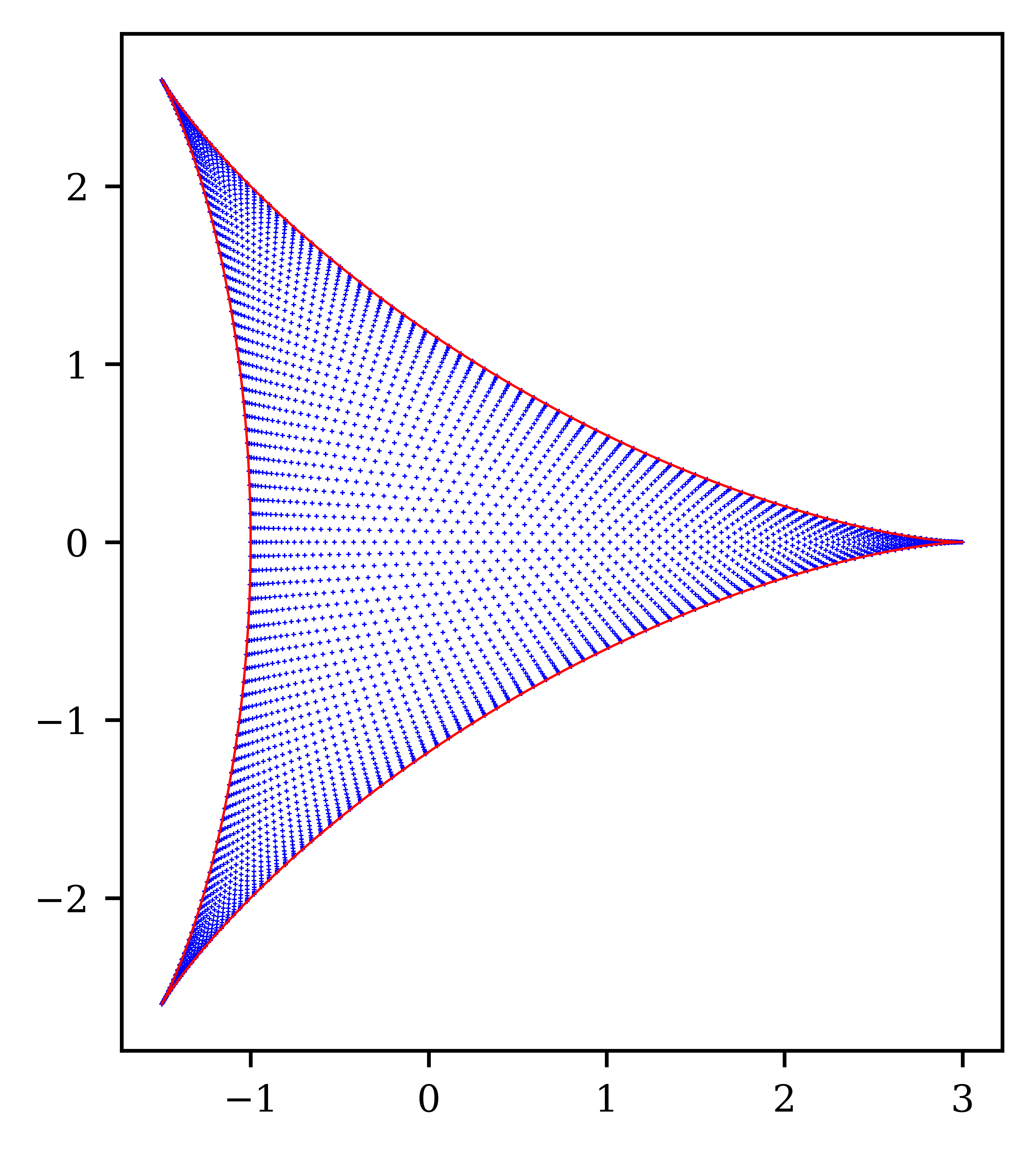}
		\caption{$q= 157$}
	\end{subfigure}
	\hfill
	\begin{subfigure}[b]{0.3\textwidth}
		\centering
		\includegraphics[width=\textwidth]{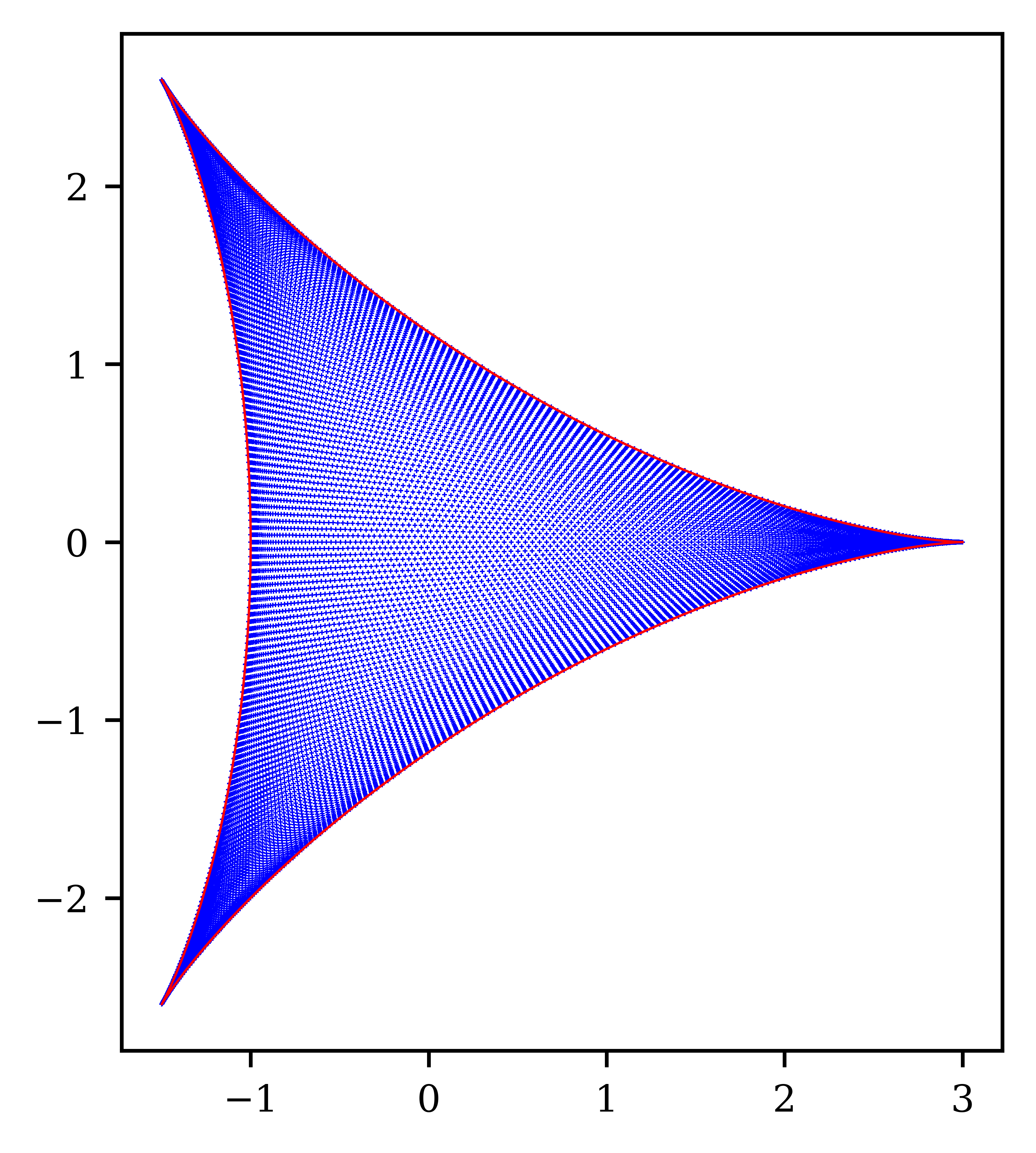}
		\caption{$q=307$}
	\end{subfigure}
	\caption{The sets $\mathcal Q_q(-,-,-,d)$ for $d= 3$ and three $3$-admissible values of $q$.}
	\label{abcd3}
\end{figure}

One could also want to consider sets of Birch sums restricted to a subgroup, that is:
$$\B_q(a,b,d) := \sum_{\substack{x \in (\zqz)^\times\\ x^d =1} }^{} e\left(\frac{ax^3 + bx}{q}\right) \text{ où } a,b \in \zqz$$
For instance if we take $d=7$ and look at the sets $ \mathcal B_q(-,-,7) :=\left\{ \B_q(a,b,7) ; \ a,b \in \zqz \right\}$, then Proposition \ref{propavecmyersonfaible} (b) (combined with Proposition \ref{interpret}) states that they should become equidistributed in $\H_7$ (the region delimited by a $7$-cusp hypocycloid) as $q$ goes to infinity among the $7$-admissible integers. This is indeed what the following pictures suggest:
\begin{figure}[H]
	\centering
	\begin{subfigure}[b]{0.3\textwidth}
		\centering
		\includegraphics[width=\textwidth]{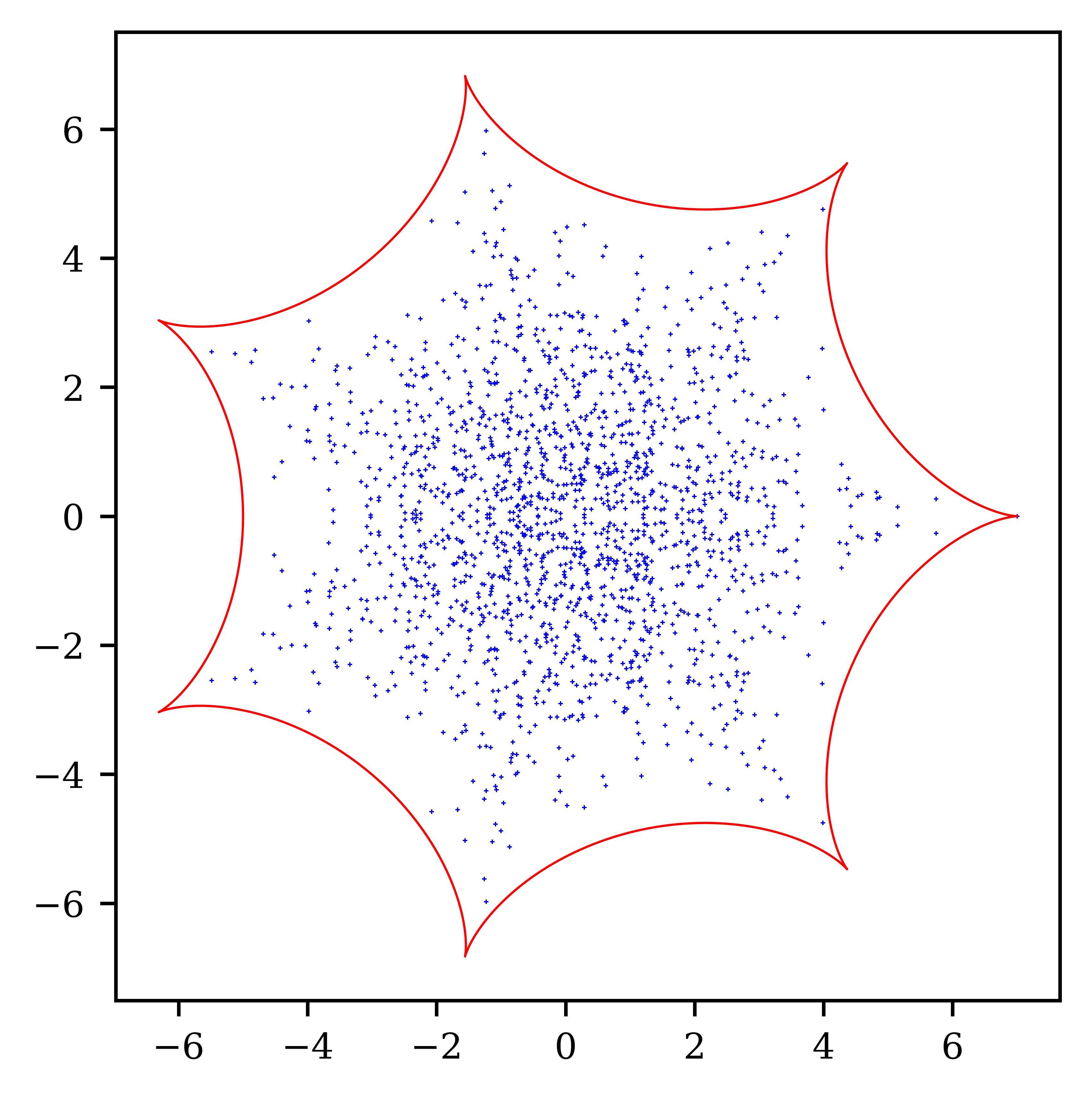}
		\caption{$q = 113$}
	\end{subfigure}
	\hfill
	\begin{subfigure}[b]{0.3\textwidth}
		\centering
		\includegraphics[width=\textwidth]{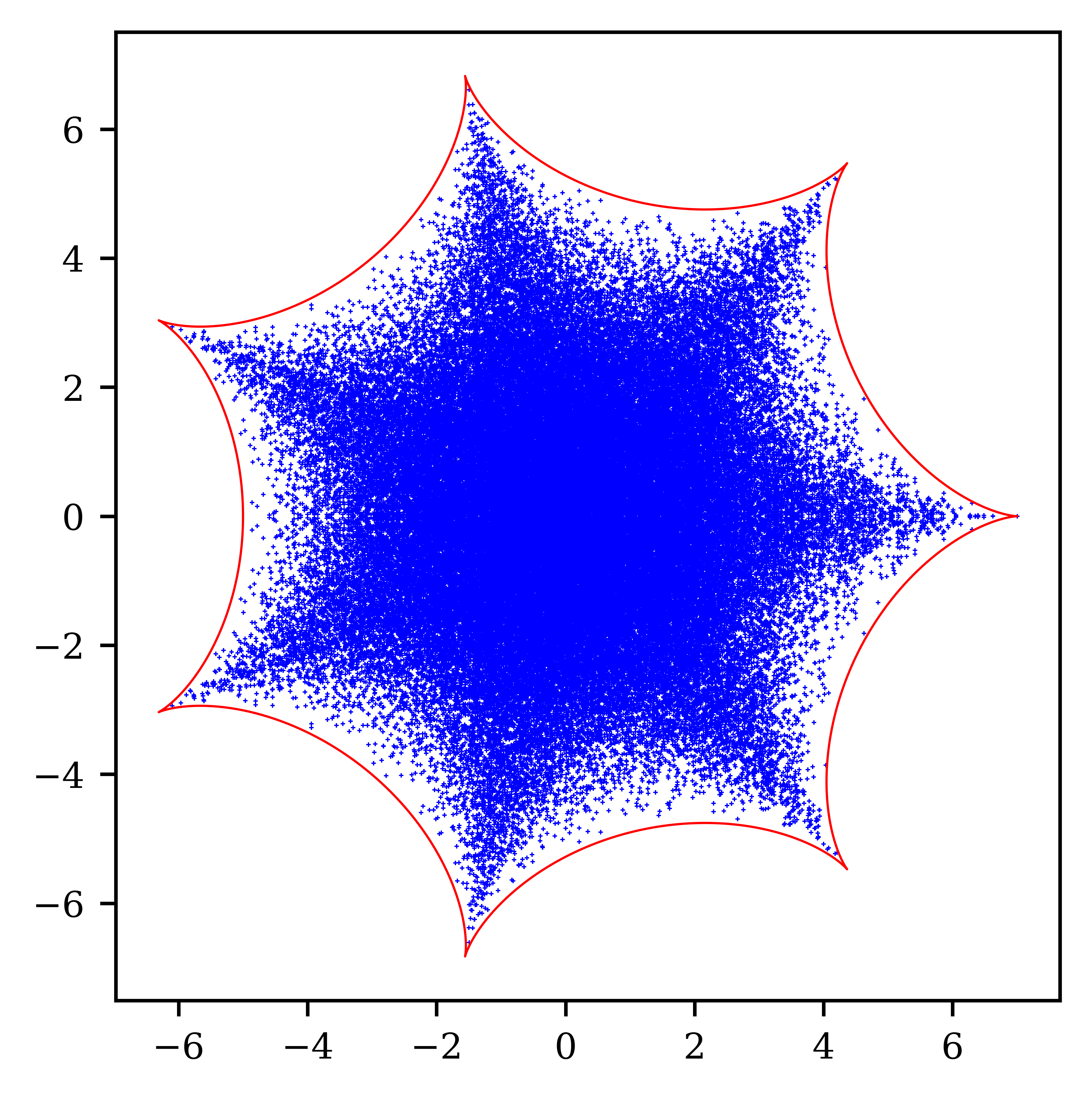}
		\caption{$q= 827$}
	\end{subfigure}
	\hfill
	\begin{subfigure}[b]{0.3\textwidth}
		\centering
		\includegraphics[width=\textwidth]{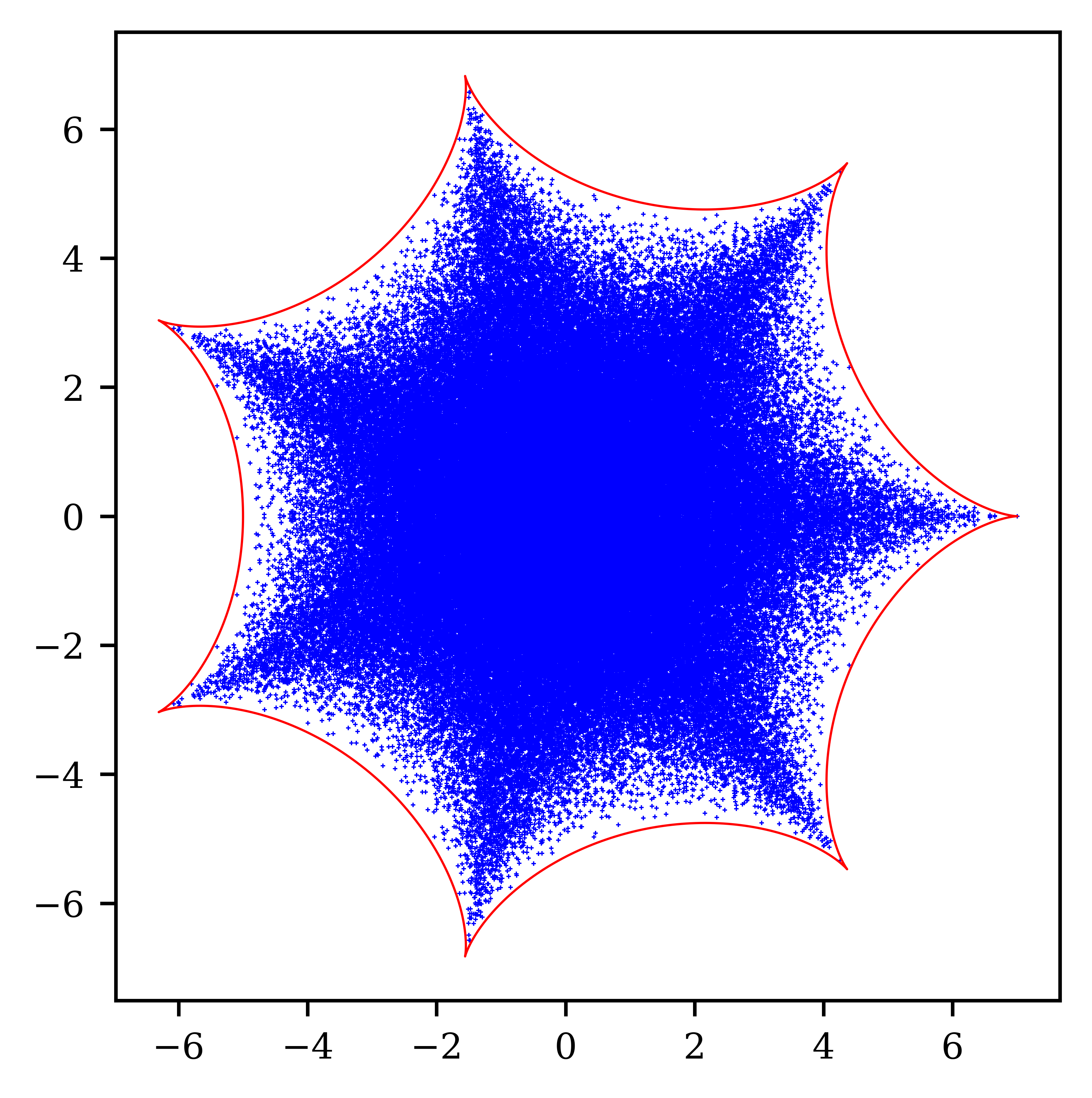}
		\caption{$q=1009$}
	\end{subfigure}
	\caption{The sets $\mathcal B_q(-,-,d)$ for $d= 7$ and three $7$-admissible values of $q$.}
	\label{birchd7}
\end{figure}

\begin{rem}
Note that in Theorem \ref{thprincipal} and Proposition \ref{propavecmyersonfaible}, the measure with respect to which the sums become equidistributed is the pushforward measure via $g_d$ of the Haar measure on $\T^{\varphi(d)}$. This explains why one does not observe a \enquote{uniform} distribution in the sense of the Lebesgue measure.
\end{rem}

On the other hand, one could want to consider Birch sums restricted to the subgroup of order $3$, that is sums of the type:

$$\B_q(a,b,3) := \sum_{\substack{x \in (\zqz)^\times\\ x^3 =1} }^{} e\left(\frac{ax^3 + bx}{q}\right) \text{ où } a,b \in \zqz$$

However, this type of sum does not fall inside the range of application of Proposition \ref{propavecmyersonfaible} (b), because the exponent $3$ in the polynomial expression $ax^3+bx$ is not coprime with the order of the subgroup. In fact, this situation is part of case (a) of Proposition \ref{propavecmyersonfaible}.\\
 
Finally, let us illustrate Proposition \ref{propgrb}, which gives a geometric interpretation of the image of $g_d$ when $d$ is a prime power.\\
	
	If we take again the example of Kloosterman sums, Proposition \ref{propavecmyersonfaible} (b) states that the sums:
	$$ \K_q(a,b,9) = \sum_{\substack{x \in (\zqz)^\times \\ x^9 = 1}} e\left(\frac{ax + bx^{-1}}{q}\right), \quad  a,b \in \zqz $$
	become equidistributed in the image of $g_9$, with respect to the pushforward measure of the Haar measure on $\T^{6}$. Now, thanks to Proposition \ref{propgrb} we can interpret the image of $g_9$ as the Minkowski sum of three copies of $\H_3$ (see also example \ref{g9}). Therefore, we should observe equidistribution in a region of the shape given by \cite[Figure 11]{menagerie}. The figure below illustrates this asymptotic behaviour.
	
	\begin{figure}[H]
		\centering
		\begin{subfigure}[b]{0.32\textwidth}
			\centering
			\includegraphics[width=\textwidth]{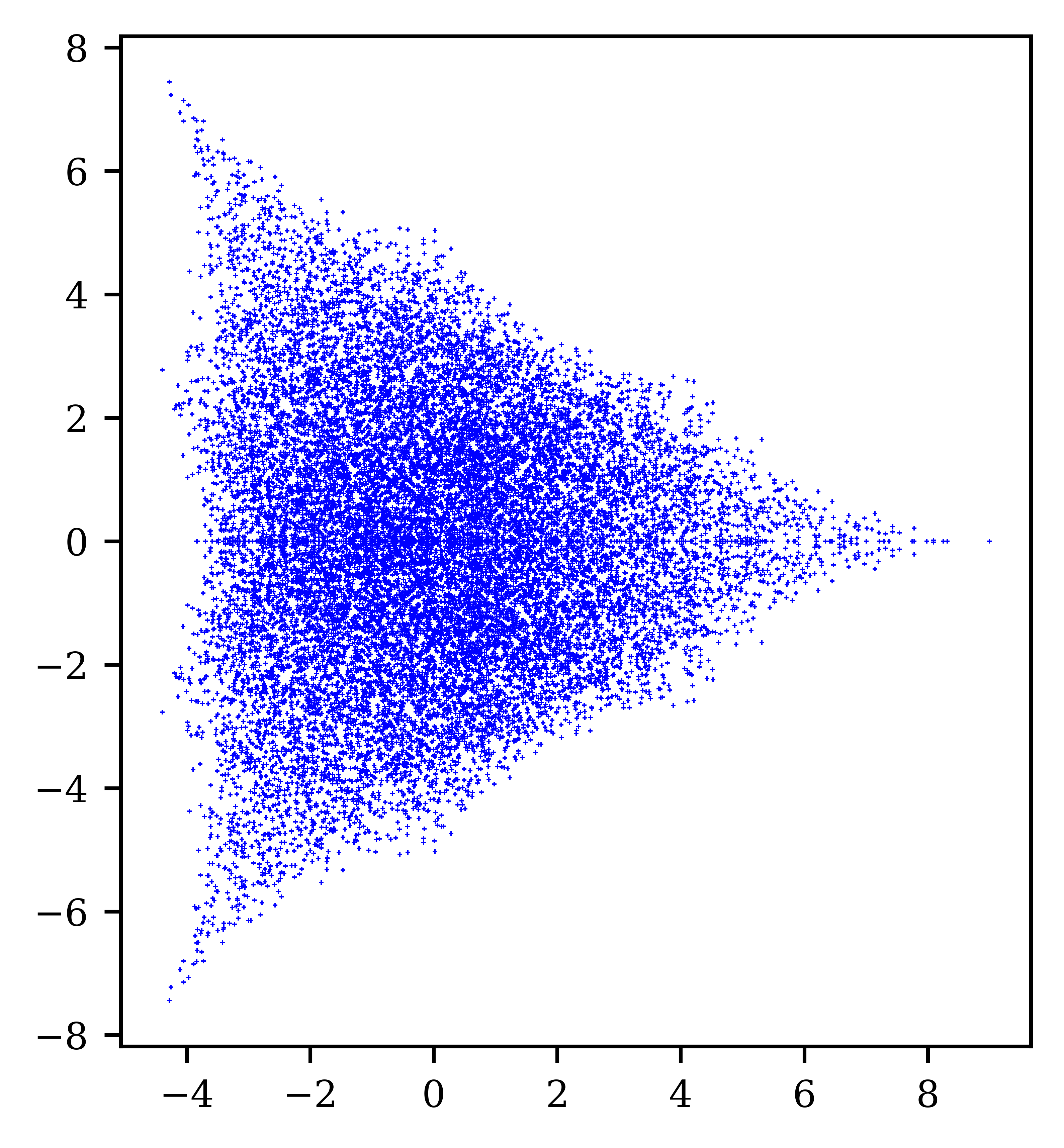}
			\caption{$q = 577$}
		\end{subfigure}
		\hfill
		\begin{subfigure}[b]{0.32\textwidth}
			\centering
			\includegraphics[width=\textwidth]{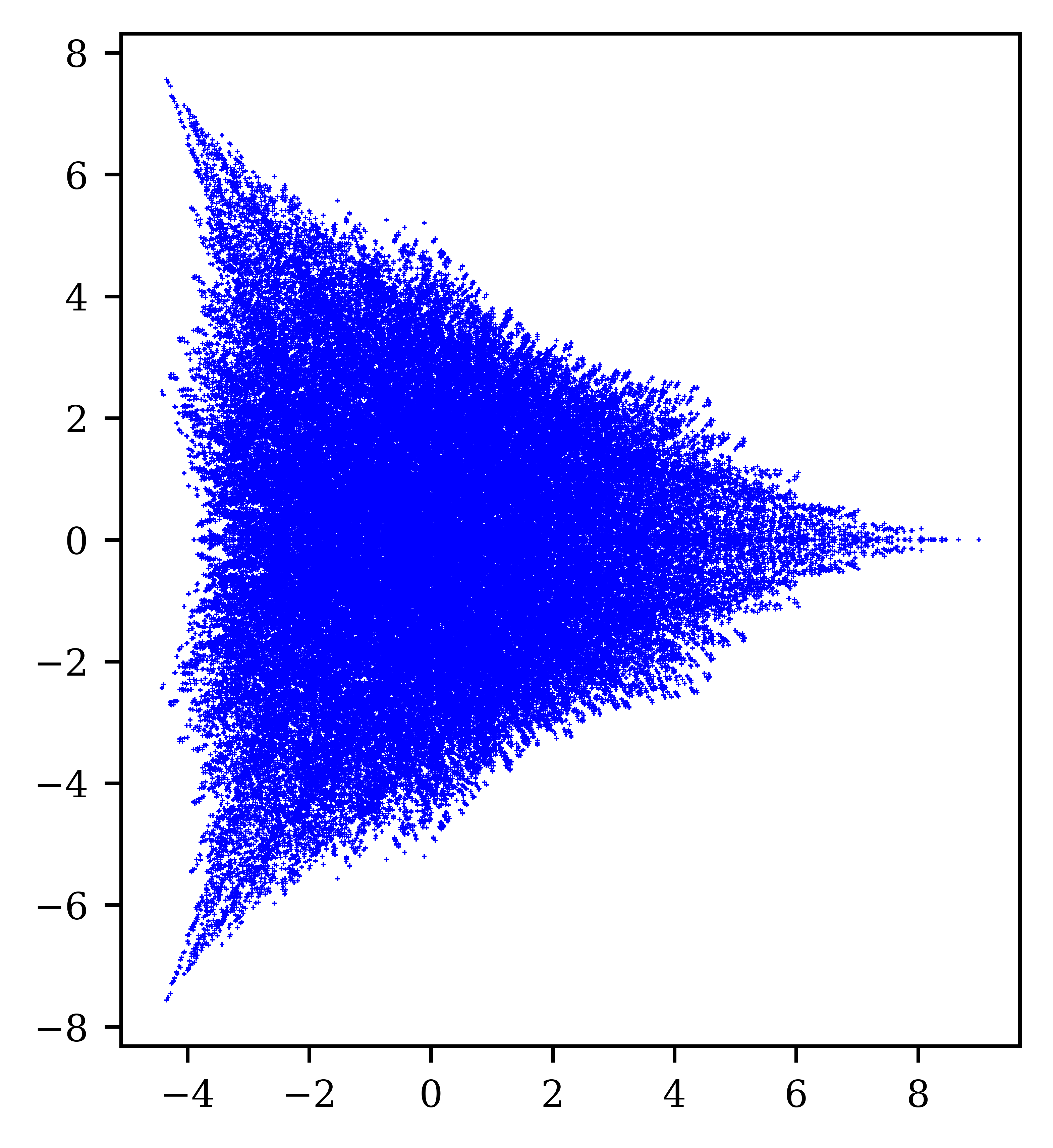}
			\caption{$q=1297$}
		\end{subfigure}
		\hfill
		\begin{subfigure}[b]{0.32\textwidth}
			\centering
			\includegraphics[width=\textwidth]{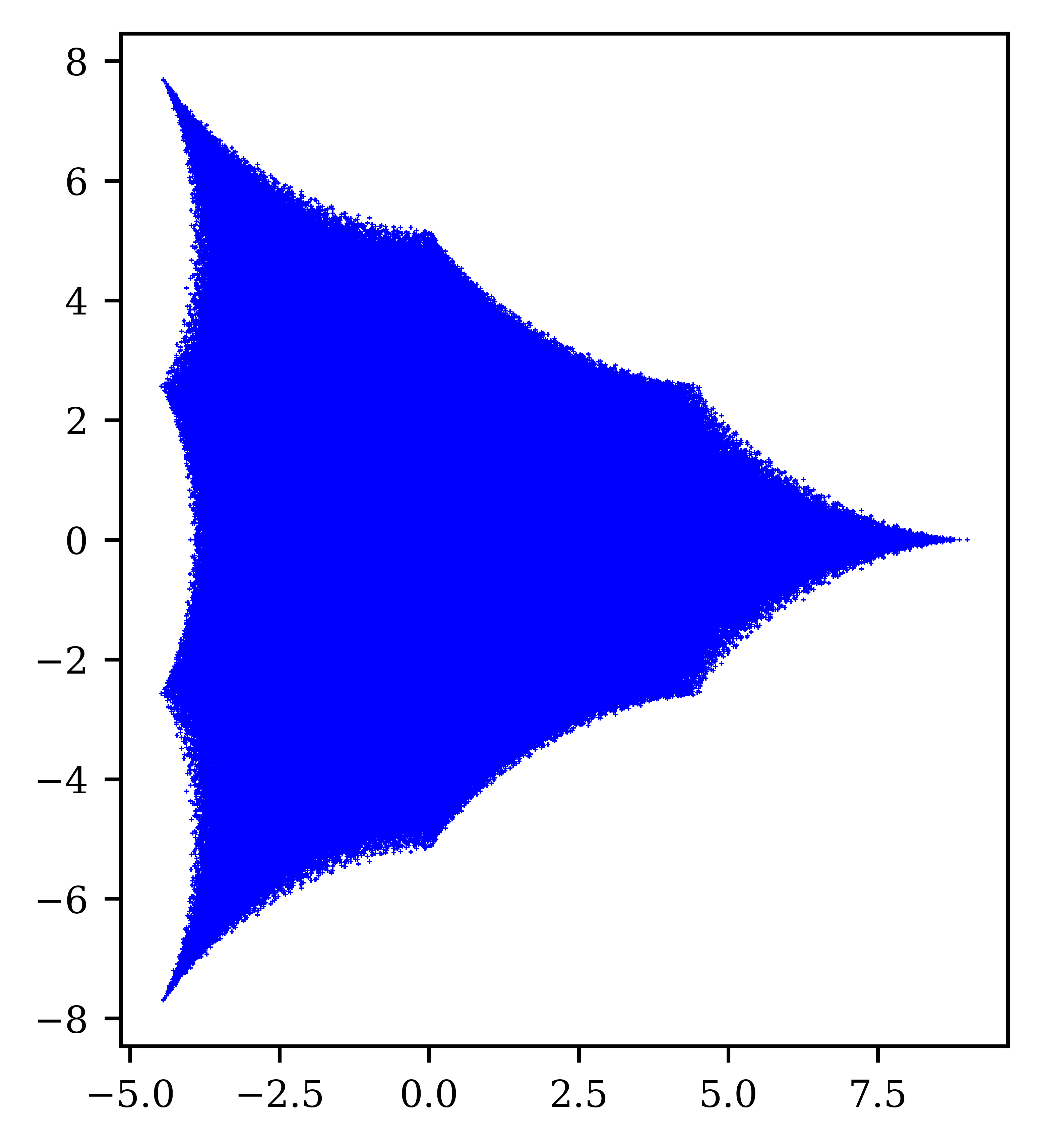}
			\caption{$q=73^2$}
		\end{subfigure}
		\caption{The sets $\mathcal K_q(-,-,9) :=  \left\{ \K_q(a,b,9); \  a,b \in \zqz \right\}$ for three $9$-admissible values of $q$.}
		
	\end{figure}

%


\bibliographystyle{alpha}
\bibliography{biblioarticle}

\vspace{1cm}
\hfill \begin{minipage}{0.9\textwidth}
\textsc{Université de Bordeaux, CNRS, Bordeaux INP, IMB, UMR 5251, F-33400 \\ Talence, France}.\\
\textit{Email address}: \texttt{theo.untrau@math.u-bordeaux.fr}
\end{minipage}

		
		

\end{document}